\theoremstyle{plain}
\newtheorem{lemma}{Lemma}
\newtheorem{theorem}{Theorem}
\newtheorem{corollary}{Corollary}
\newtheorem{proposition}{Proposition}
\theoremstyle{definition}
\newtheorem{definition}{Definition}
\theoremstyle{remark}
\newtheorem{example}{Example}
\newtheorem{remark}{Remark}
\newcommand{\ACTMult}{!^m\nabla \mathrm{ACT}_\omega}
\newcommand{\ACT}{\mathrm{ACT}_\omega}
\newcommand{\Logic}{\mathrm{L}}
\newcommand{\PN}{\mathfrak{P}}
\newcommand{\SATOO}{\mathrm{SAT}_{<\omega^\omega}}
\newcommand{\SAT}{\mathrm{SAT}}
\newcommand{\Nat}{\mathbb{N}}
\newcommand{\PrFm}{\mathrm{Pr}}
\newcommand{\Fm}{\mathrm{Fm}}
\newcommand{\fm}{\mathit{fm}}
\newcommand{\ax}{\mathrm{ax}}
\newcommand{\eqdef}{\stackrel{\text{\tiny def}}{=}}
\newcommand{\fail}{\mathit{fail}}
\newcommand{\final}{\sigma}
\newcommand{\inp}{\mathit{inp}}
\newcommand{\go}{\mathit{go}}
\newcommand{\wait}{\mathit{wait}}
\newcommand{\okay}{\mathit{ok}}
\newcommand{\OKAY}{\mathit{OK}}
\newcommand{\Killer}{\mathcal{K}}
\newcommand{\Energy}{\mathcal{E}}
\newcommand{\energy}{\epsilon}
\newcommand{\HEnergy}{\widehat{\mathcal{E}}}
\newcommand{\Technical}{\mathrm{Tech}}
\newcommand{\Rule}{\mathrm{Rule}}
\newcommand{\bs}{\backslash}
\newcommand{\bang}{\mathop{!}}
\newcommand{\blank}{\lambda}
\newcommand{\res}{\mathrm{res}}
\newcommand{\Aug}{\mathrm{Aug}}
\newcommand{\hyp}{\mathrm{hyp}}
\newcommand{\Tape}{\mathit{Tp}}
\newcommand{\Input}{\mathit{In}}
\newcommand{\Output}{\mathit{Out}}
\newcommand{\Trans}{\delta}
\newcommand{\TM}{TM}
\newcommand{\SR}{\mathrm{SR}}
\newcommand{\sr}{\mathit{SR}}
\newcommand{\DerTree}{\mathfrak{D}}
\newcommand{\Tree}{\mathfrak{T}}
\newcommand{\Halt}{\mathrm{Halt}_0}
\newcommand{\HALT}{\mathrm{Halt}}
\newcommand{\Premise}{\mathrm{Prem}}
\newcommand{\Axiom}{\mathrm{Ax}}
\newcommand{\Der}{\mathrm{Der}}
\newcommand{\LogicOOO}{L_{\omega_1\omega}}
\DeclareMathOperator*{\bigdoublewedge}{\bigwedge\mkern-15mu\bigwedge}
\DeclareMathOperator*{\bigdoublevee}{\bigvee\mkern-15mu\bigvee}
\definecolor{highlight}{RGB}{255,210,210}
\definecolor{principal}{RGB}{255,255,210}
\definecolor{reduced}{RGB}{210,255,210}
\newcommand{\principal}[1]{\fbox{$#1$}}
\begin{document}
	
	\title{Hyperarithmetical Complexity of \\ Infinitary Action Logic with Multiplexing}
	
	\author
	{
		Tikhon Pshenitsyn
		\\
		\href{mailto:tpshenitsyn@mi-ras.ru}{tpshenitsyn@mi-ras.ru} 
		\\
		Steklov Mathematical Institute of Russian Academy of Sciences
		\\
		8 Gubkina St., Moscow 119991, Russia
	}

	\maketitle
	
	\begin{abstract}
		In 2023, Kuznetsov and Speranski introduced infinitary action logic with multiplexing $\ACTMult$ and proved that the derivability problem for it lies between the $\omega$ and $\omega^\omega$ levels of the hyperarithmetical hierarchy. We prove that this problem is $\Delta^0_{\omega^\omega}$-complete under Turing reductions. Namely, we show that it is recursively isomorphic to the satisfaction predicate for computable infinitary formulas of rank less than $\omega^\omega$ in the language of arithmetic. As a consequence we prove that the closure ordinal for $\ACTMult$ equals $\omega^\omega$. We also prove that the fragment of $\ACTMult$ where Kleene star is not allowed to be in the scope of the subexponential is $\Delta^0_{\omega^\omega}$-complete. Finally, we present a family of logics, which are fragments of $\ACTMult$, such that the complexity of the $k$-th logic lies between $\Delta^0_{\omega^k}$ and $\Delta^0_{\omega^{k+1}}$.
		\\
		\textit{Keywords: action logic; multiplexing; Kleene star; hyperarithmetical hierarchy; closure ordinal}
	\end{abstract}

	\section{Introduction}\label{sec_introduction}
	
	Infinitary action logic $\ACT$ is an axiomatization of $^\ast$-continuous residuated Kleene lattices, which is introduced in \cite{Palka07}. Effectively, $\ACT$ is a substructural logic extending the full Lambek calculus with the Kleene star operation. Infinitariness of $\ACT$ comes from the following $\omega$-rule for Kleene star, which has countably many premises:
	$$
	\infer[(\ast L_\omega)]{\Gamma, A^\ast, \Delta \vdash B}{\left( \Gamma, A^n, \Delta \vdash B \right)_{n<\omega}}
	$$
	Another operations one can enrich the Lambek calculus with are exponential and subexponential modalities of linear logic. Extensions of the Lambek calculus with such operations are now being studied by logicians and linguists who suggest using them e.g. for modelling anaphora and ellipsis in natural languages \cite{McPheatWSCT21}. Over and above that, in \cite{KuznetsovS22, KuznetsovS23}, exponential and subexponential modalities are added to infinitary action logic. The resulting logics are of interest from the point of view of their complexity. The derivability problem for infinitary action logic with the exponential has very high complexity, namely, it is $\Pi^1_1$-complete \cite{KuznetsovS22}. In \cite{Kuznetsov21}, a fragment of this logic is studied where Kleene star is not allowed to appear in
	the scope of the exponential. Its complexity is proved to be in $\Delta^1_1$, so it is hyperarithmetical; besides, derivability in this logic is $\Pi^0_2$-hard hence this is a logic of intermediate complexity between $\ACT$ and $\ACT$ with the exponential.
	
	Another extension of $\ACT$, which was introduced and studied in \cite{KuznetsovS23}, is \emph{infinitary action logic with multiplexing} $\ACTMult$. It contains a weaker subexponential modality admitting the so-called \emph{multiplexing} introduction rule (actually, there are countably many rules indexed by $n \in \Nat$):
	$$
	\infer[(\bang L_n)]{\Gamma, \bang A, \Delta \vdash B}{\Gamma, A^n, \Delta \vdash B}
	$$
	Apart from $\bang$ and $^\ast$, the logic $\ACTMult$ contains the modality $\nabla$ such that a formula of the form $\nabla A$ admits the permutation rule. Complexity of $\ACTMult$ is known to be hyperarithmetical; more precisely, it lies in the $\omega^\omega$ level of the hyperarithmetical hierarchy \cite{KuznetsovS23}. For the lower bound, this logic is known to be $\Delta^0_\omega$-hard because the first-order theory of the standard model of arithmetic is m-reducible to derivability in $\ACTMult$ \cite{KuznetsovS23}. However, the exact complexity of this logic is an open problem \cite{KuznetsovS23}. 
	
	In this paper, we solve this problem proving the following theorem:
	\begin{theorem}\label{th_main_complexity}
		The derivability problem for $\ACTMult$ is $\Delta^0_{\omega^\omega}$-complete under Turing reductions.
	\end{theorem}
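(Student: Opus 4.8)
The plan is to establish Theorem~\ref{th_main_complexity} through two reductions. Recall that $\Delta^0_{\omega^\omega}=\bigcup_{\alpha<\omega^\omega}\Delta^0_\alpha=\bigcup_{\alpha<\omega^\omega}\Sigma^0_\alpha$ (the second equality because $\omega^\omega$ is a limit ordinal), and that $\SATOO$, the satisfaction predicate for computable infinitary formulas of rank $<\omega^\omega$ in the language of arithmetic, is a $\le_m$-complete set for this class: it lies in the class, since satisfaction of a rank-$\le\beta$ formula is $\Delta^0_{\beta+1}$; and it is $\le_m$-hard, since every $\Sigma^0_\alpha$ set with $\alpha<\omega^\omega$ is definable by a computable infinitary $\Sigma_\alpha$ formula over $\Nat$ and hence $\le_m\SATOO$. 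So it suffices to prove $\ACTMult$-derivability $\le_m\SATOO$ and $\SATOO\le_T\ACTMult$-derivability. With some extra care the latter can also be made an $m$-reduction, and then a Myhill-style padding argument upgrades the two $m$-reductions to the recursive isomorphism stated in the abstract; but the two displayed reductions already yield the theorem.

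\emph{Upper bound.} That $\ACTMult$-derivability lies in $\Delta^0_{\omega^\omega}$ is from \cite{KuznetsovS23}; the plan is to recast it as an $m$-reduction to $\SATOO$. Work in the cut-free sequent calculus, whose only infinitary rule is $(\ast L_\omega)$. The structural fact behind the $\omega^\omega$ bound is that unfolding a Kleene star — whether through $(\ast L_\omega)$ or through $(\bang L_n)$ applied to a formula containing stars — does not increase the Kleene-star nesting depth $d(s)$ of the sequent, while $(\bang L_n)$ and the $\nabla$-rules add only $1$ to the ordinal height. Hence every cut-free derivation of $s$ has ordinal rank below $\omega^{d(s)+c}<\omega^\omega$ for an absolute constant $c$, and $d(s)$ is computable from $s$. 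One then writes, uniformly in $s$, a computable infinitary formula $\varphi_s$ of rank $<\omega^\omega$ expressing ``a well-founded cut-free derivation of $s$ of rank below $\omega^{d(s)+c}$ exists'': premises of $(\ast L_\omega)$ become infinitary conjunctions $\bigwedge_n$, premises of $(\bang L_n)$ together with the finite branching over applicable rules and context splittings become infinitary disjunctions, and the rank bound keeps $\varphi_s$ within rank $<\omega^\omega$. Since $\Nat\models\varphi_s$ iff $s$ is derivable, $s\mapsto\varphi_s$ is the desired $m$-reduction.

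\emph{Lower bound.} Here lies the main work. The plan is to prove, by induction on $k<\omega$, that the satisfaction predicate for computable infinitary formulas of rank $<\omega^k$ $m$-reduces to $\ACTMult$-derivability, uniformly in $k$; the union over $k$ then gives $\SATOO\le_m\ACTMult$-derivability. The base case $k=1$ — the $m$-reduction of first-order arithmetical truth — is the encoding of \cite{KuznetsovS23}. For the step I would encode the run of a Turing machine with an oracle for the level $\omega^{k-1}$ already handled: configurations and transitions are simulated by string-rewriting gadgets, with derivability of the encoding sequent corresponding to acceptance. The two infinitary features of $\ACTMult$ implement the two new moves: $(\ast L_\omega)$ supplies the unbounded, transfinite iteration and the universal quantifier hidden in the passage to the jump, while $(\bang L_n)$ supplies the nondeterministic guess of a bound and the bounded batch of oracle calls it licenses, and $\nabla$ provides the rearrangements of the duplicated copies that multiplexing forces. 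Each inductive layer wraps the previous gadget inside one further Kleene star while keeping every subexponential outside the scope of all stars, so the whole construction remains in the fragment with no $\ast$ under $\bang$; this raises the ordinal rank available in the relevant derivations, hence the arithmetical level captured, from $\omega^{k-1}$ to $\omega^k$. Correctness splits in the customary way: soundness (derivability of the encoding implies acceptance), by induction on derivations controlled by an energy measure $\Energy$ that bounds how proof search may branch, in the style of \cite{KuznetsovS23}; and completeness (acceptance implies derivability), by compiling accepting computations into well-founded derivations.

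\emph{Main obstacle and conclusion.} The delicate point is the inductive step of the lower bound: arranging the gadgets so that one extra Kleene-star layer buys \emph{exactly} one further $\omega$ in the hyperarithmetical hierarchy, uniformly in $k$. Getting no less is the completeness direction — every true rank-$<\omega^k$ instance must be compiled into an actual well-founded derivation, forcing a careful account of how $(\ast L_\omega)$ interacts with the guessed multiplexing bounds; getting no more is the soundness direction — the energy argument must certify that no derivation of the encoding overclimbs by exploiting stars beyond their intended role. Granting this, $\ACTMult$-derivability and $\SATOO$ are $\le_m$-interreducible, so $\ACTMult$-derivability is $\Delta^0_{\omega^\omega}$-complete under Turing (indeed many-one) reductions, which is Theorem~\ref{th_main_complexity}. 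The corollary that the closure ordinal of $\ACTMult$ equals $\omega^\omega$ follows: $\le\omega^\omega$ from the rank analysis above, and $\ge\omega^\omega$ because closing before stage $\omega^\omega$ would place derivability in some $\Delta^0_\alpha$ with $\alpha<\omega^\omega$, contradicting the hardness just established.
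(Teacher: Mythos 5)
Your overall architecture (two $m$-reductions between $\ACTMult$-derivability and $\SATOO$, upgraded by Myhill's theorem) is exactly the paper's, and your upper-bound sketch matches the paper's construction of the formulas $\Der_p$ by effective transfinite recursion along the rank function. However, there are two genuine problems.

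First, a preliminary error: $\Delta^0_{\omega^\omega}\neq\bigcup_{\alpha<\omega^\omega}\Sigma^0_\alpha$. The set $H(\omega^\omega)$ is computable from itself, hence in $\Delta^0_{\omega^\omega}$ as the paper defines it, but it computes every $H(\beta)$ for $\beta<\omega^\omega$ and so lies in no $\Sigma^0_\alpha$ with $\alpha<\omega^\omega$. Consequently your hardness argument for $\SATOO$ (level-by-level definability of $\Sigma^0_\alpha$ sets) does not by itself give $\Delta^0_{\omega^\omega}$-hardness under Turing reductions; you need the uniform statement $H(\omega^\omega)\le_m\SATOO$, which is the paper's Proposition \ref{proposition_SAT_H} and is itself proved by effective transfinite recursion. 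This is repairable but is not what you wrote.

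Second, and more seriously, the lower bound — which is the entire substance of the theorem — is only a plan, and the plan you outline is not the paper's and would run into an obstacle you do not address. You propose an induction on $k$ that simulates a Turing machine with oracle $H(\omega^{k-1})$ at each layer. Oracle simulation forces you to certify \emph{negative} oracle answers ($n\notin H(\beta)$), i.e.\ at every level the gadget must verify both membership and non-membership in the previous level, which is a co-r.e.-relative condition; your sketch says nothing about how derivability (an inherently ``positive'' notion) certifies the negative answers. The paper avoids this entirely by reducing $\SATOO$ directly: it simulates the recursive evaluation of a computable infinitary formula, where the $\Sigma/\Pi$ duality is built in — $\bang$ implements the existential choice of a disjunct for $\Sigma_\alpha$-formulas, $(\ast L_\omega)$ implements the universal branching for $\Pi_\alpha$-formulas, and a string-rewriting gadget computes the index of the next subformula. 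The reduction is a single uniform map $\inp\mapsto(a_L,a_1^{\inp},a_X,\energy,\Energy_{h_M},\dotsc,\Energy_{h_1}\vdash a_L\cdot\okay)$ governed by the Cantor normal form $\omega^{h_1}+\dotsb+\omega^{h_M}=\alpha+1$, with correctness proved by transfinite induction on this ordinal using a ``bottom-top'' analysis of basic (normal-form) derivations. None of this machinery — the normal-form lemma for derivations, the $\Energy_k$ formulas, the string-rewriting encoding, the soundness/completeness induction — appears in your proposal beyond the acknowledgement that it is ``the delicate point.'' As it stands, the proof of the theorem is missing its main component.
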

	Namely, we show that the set of sequents derivable in $\ACTMult$ is recursively isomorphic to the satisfaction predicate in the standard model of arithmetic for the fragment of infinitary logic $\LogicOOO$ that consists of computable infinitary formulas of rank less than $\omega^\omega$ (Theorem \ref{th_main_1-equivalence}). This isomorphism can be viewed as a generalization of the m-reduction of the first-order theory of arithmetic to the derivability problem for $\ACTMult$ presented in \cite{KuznetsovS23}. Note that the structure of the hyperarithmetical hierarchy does not allow us to formulate the complexity result in terms of m-reducibility because there is no invariant definition of a $\Delta^0_{\omega^\omega}$-complete set under m-reductions if one uses Kleene's $\mathcal{O}$ notation \cite{Rogers67} (look, however, at Corollary \ref{corollary_main_m-reducibility}). As a corollary of $\Delta^0_{\omega^\omega}$-completeness of $\ACTMult$, we prove that the closure ordinal for this logic equals $\omega^\omega$ (finding its exact value was left open in \cite{KuznetsovS23}); see Section \ref{ssec_closure_ordinal} for details.
	
	The main part of the proof of Theorem \ref{th_main_complexity} is constructing a reduction of the satisfaction predicate to derivability in $\ACTMult$. It consists of writing a ``low-level program'' in the ``programming language'' $\ACTMult$ (Section \ref{ssec_construction}) which checks whether a given computable infinitary formula is true under a given assignment of variables in the standard model of arithmetic. Checking that the ``program'' behaves correctly requires certain proof analysis in $\ACTMult$ (Section \ref{ssec_BTA}). In Section \ref{ssec_ACTMult_minus}, we also present a variant of the reduction for the logic $\ACTMult^-$; in formulas of this logic, Kleene star does not appear in the scope of the multiplexing subexponential. In what follows, this restriction on formulas of $\ACTMult$ does not decrease the complexity of this logic. Contrast this with the same restriction for infinitary action logic with the exponential, which is known to decrease complexity of the derivability problem for this logic. 
	
	Apart from this, in Section \ref{ssec_family_of_fragments}, we introduce the family of fragments $\ACTMult^{(k)}$ of the logic $\ACTMult$ (formulas of $\ACTMult^{(k)}$ are those with $\{\ast,\bang\}$-depth being not greater than $k$) such that the complexity of $\ACTMult^{(k)}$ lies between $\Delta^0_{\omega^k}$ and $\Delta^0_{\omega^{k+1}}$. Hence we have countably many logics with non-decreasing complexities that tend to $\omega^\omega$.
	
	The achieved complexity results are interesting from the following point of view. According to Theorem \ref{th_main_complexity}, $\ACTMult$ is a propositional logic with the hyperarithmetical algorithmic complexity $\Delta^0_{\omega^\omega}$. To our best knowledge, no logical system studied in the literature has the same complexity. The only naturally formulated problem we have found that has the same complexity is the isomorphism problem for well-founded tree-automatic trees \cite{KartsowJM12}, which, however, is of different nature. 
	
	The paper is organized as follows. In Section \ref{sec_preliminaries}, we provide preliminary definitions. We define the logic $\ACTMult$ and survey its basic properties; there we also recall the main ideas behind the hyperarithmetical hierarchy. Section \ref{sec_main} presents the main results and their proofs. In Section \ref{sec_conclusion}, we conclude.

	\section{Preliminaries}\label{sec_preliminaries}
	
	In this section, we introduce all the basic machinery behind our main construction. Firstly, we present infinitary action logic with multiplexing and prove some its properties that we use in the following section. Secondly, we define relevant concepts of the computability theory; we introduce the hyperarithmetical hierarchy (using not Kleene's $\mathcal{O}$ but a nice polynomial notation) and also present definitions related to Turing machines which are needed to handle technicalities in the main construction.
	
	\subsection{Infinitary Action Logic with Multiplexing}\label{ssec_ACTMult}
	In \cite{KuznetsovS23}, the logic $\ACTMult$ is defined as a sequent calculus. Let $\PrFm$ be the set of primitive formulas. Formulas are built from primitive formulas and constants $\mathbf{0},\mathbf{1}$ using $\bs, /, \cdot, \wedge,\vee,\bang,^\ast$. The set of formulas is denoted by $\Fm$. A sequent is a structure of the form $A_1,\dotsc,A_n \vdash B$ where $n \ge 0$ and $A_i$, $B$ are formulas from $\Fm$. The axiom and the rules of $\ACTMult$ are presented below.
	$$
	\infer[(\ax)]{A \vdash A}{}
	$$
	$$
	\infer[(\bs L)]{\Gamma, \Pi, B \bs A, \Delta \vdash C}{\Gamma, A, \Delta \vdash C & \Pi \vdash B}
	\qquad
	\infer[(\bs R)]{\Pi \vdash B \bs A}{B, \Pi \vdash A}
	$$
	$$
	\infer[(/ L)]{\Gamma, A / B, \Pi, \Delta \vdash C}{\Gamma, A, \Delta \vdash C & \Pi \vdash B}
	\qquad
	\infer[(/ R)]{\Pi \vdash A / B}{\Pi, B \vdash A}
	$$
	$$
	\infer[(\cdot L)]{\Gamma, A \cdot B, \Delta \vdash C}{\Gamma, A, B, \Delta \vdash C}
	\qquad
	\infer[(\cdot R)]{\Gamma, \Delta \vdash A \cdot B}{\Gamma \vdash A & \Delta \vdash B}
	$$
	$$
	\infer[(\wedge L_i), i=1,2]{\Gamma, A_1 \wedge A_2, \Delta \vdash C}{\Gamma, A_i, \Delta \vdash C}
	\qquad
	\infer[(\wedge R)]{\Pi \vdash A_1 \wedge A_2}{\Pi \vdash A_1 & \Pi \vdash A_2}
	$$
	$$
	\infer[(\vee  L)]{\Gamma , A_1 \vee A_2, \Delta \vdash C}{\Gamma , A_1 , \Delta \vdash C & \Gamma , A_2, \Delta \vdash C}
	\qquad
	\infer[(\vee R_i), i=1,2]{\Pi \vdash A_1 \vee A_2}{\Pi \vdash A_i}
	$$
	$$
	\infer[(\mathbf{0} L)]{\Gamma, \mathbf{0}, \Delta \vdash C}{}
	\qquad
	\infer[(\mathbf{1} L)]{\Gamma , \mathbf{1}, \Delta \vdash C}{\Gamma , \Delta \vdash C}
	\qquad
	\infer[(\mathbf{1} R)]{\vdash \mathbf{1}}{}
	$$
	$$
	\infer[(\bang L_n), n < \omega]{\Gamma, \bang A, \Delta \vdash B}{\Gamma, A^n, \Delta \vdash B}
	\qquad
	\infer[(\bang R)]{\bang A \vdash \bang B}{A \vdash B}	
	$$
	$$
	\infer[(\ast L_\omega)]{\Gamma, A^\ast, \Delta \vdash B}{\left( \Gamma, A^n, \Delta \vdash B \right)_{n<\omega}}
	$$
	$$
	\infer[(\ast R_0)]{\vdash A^\ast}{}
	\qquad
	\infer[(\ast R_n), 0 < n < \omega]{\Pi_1,\dotsc,\Pi_n \vdash A^\ast}{\Pi_1 \vdash A & \dotsc & \Pi_n \vdash A}
	$$
	$$
	\infer[(\nabla L)]{\Gamma, \nabla A, \Delta \vdash B}{\Gamma, A, \Delta \vdash B}
	\qquad
	\infer[(\nabla R)]{\nabla A \vdash \nabla B}{A \vdash B}	
	$$
	$$
	\infer[(\nabla P_1)]{\Gamma, \nabla A, \Pi, \Delta \vdash B}{\Gamma, \Pi, \nabla A, \Delta \vdash B}
	\qquad
	\infer[(\nabla P_2)]{\Gamma, \Pi, \nabla A, \Delta \vdash B}{\Gamma, \nabla A, \Pi, \Delta \vdash B}
	$$
	
	When we consider a formula of the form $A_1 \bs \dotsc \bs A_m \bs B$ we assume the bracketing $A_1 \bs (\dotsc \bs (A_m \bs B)\dotsc)$.
	
	When we consider an instantiation of a rule of $\ACTMult$ (we also call it a rule application), we use the standard notion of a principal occurrence of a formula in the conclusion. The principal formula in a rule application is the distinguished occurrence of a formula in the conclusion of the rule as defined above. E.g. in the rule $(\bs L)$, $B \bs A$ from the conclusion is principal; in the rule $(\bs R)$, the succedent $B \bs A$ is principal; in $(\bang R)$, $\bang B$ is principal; etc. There are no principal formulas in axioms.
	
	The cut rule is admissible in $\ACTMult$ \cite[Theorem 4.1]{KuznetsovS23}:
	$$
	\infer[(\mathrm{cut})]{\Gamma,\Pi,\Delta \vdash C}{\Pi \vdash A & \Gamma,A, \Delta \vdash C}
	$$
	
	The following is a standard corollary of the cut rule admissibility.
	
	\begin{corollary}\label{corollary_reversible_rules}
		The rules $(\cdot L)$, $(\vee L)$ and $(\ast L_\omega)$ are reversible.
	\end{corollary}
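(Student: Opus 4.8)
The plan is to obtain each reversibility claim as an immediate consequence of the admissibility of $(\mathrm{cut})$ in $\ACTMult$ \cite[Theorem 4.1]{KuznetsovS23}. Recall that reversibility of a rule means that derivability of its conclusion entails derivability of each of its premises. To invert a left rule whose principal formula $F$ is assembled from components occurring in its premise(s), it suffices to write down a short derivation of a sequent $\vec{X} \vdash F$, where $\vec{X}$ lists those components, and then cut it against the given derivation of the conclusion.

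Concretely, for $(\cdot L)$ I would use the derivation of $A, B \vdash A \cdot B$ obtained by applying $(\cdot R)$ to the axioms $A \vdash A$ and $B \vdash B$; cutting on $A \cdot B$ against a derivation of $\Gamma, A \cdot B, \Delta \vdash C$ gives the premise $\Gamma, A, B, \Delta \vdash C$. For $(\vee L)$ I would use the derivations of $A_i \vdash A_1 \vee A_2$ ($i = 1,2$) obtained from the axiom $A_i \vdash A_i$ by $(\vee R_i)$; cutting each on $A_1 \vee A_2$ against a derivation of $\Gamma, A_1 \vee A_2, \Delta \vdash C$ yields both premises $\Gamma, A_i, \Delta \vdash C$. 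For $(\ast L_\omega)$ I would use, for each $n < \omega$, the derivation of $A^n \vdash A^\ast$ given by $(\ast R_0)$ when $n = 0$ and by an application of $(\ast R_n)$ to $n$ copies of the axiom $A \vdash A$ when $0 < n < \omega$; cutting on $A^\ast$ against a derivation of $\Gamma, A^\ast, \Delta \vdash B$ yields the $n$-th premise $\Gamma, A^n, \Delta \vdash B$, uniformly in $n$, so that all premises are derivable whenever the conclusion is.

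I do not expect any genuine obstacle here: all of the witnessing derivations are finite and are read off directly from the axiom scheme together with the right rules for $\cdot$, $\vee$, and $^\ast$, and the only nontrivial ingredient is cut admissibility, which is already available from \cite{KuznetsovS23}.
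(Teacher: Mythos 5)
Your proposal is correct and follows exactly the paper's argument: the paper likewise derives reversibility from cut admissibility by cutting the conclusion against the easily derivable sequents $A, B \vdash A \cdot B$, $A_i \vdash A_1 \vee A_2$, and $A^n \vdash A^\ast$ (the last of these is the example the paper spells out). No differences worth noting.
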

	
	This means that, if we take an instantiation of any of these rules and if its conclusion is derivable, then so are all its premises. E.g. if we know that $\Gamma, A^\ast, \Delta \vdash B$ is derivable, then we apply the cut rule to it and to the derivable sequent $A^n \vdash A^\ast$ thus concluding that the sequent $\Gamma, A^n, \Delta \vdash B$ is derivable (which holds for any $n$).
	
	Another important feature of $\ACTMult$ is the following ranking function defined on formulas and sequents \cite[Section 3]{KuznetsovS23}:
	
	\begin{itemize}
		\item $\rho(p) = 1$ for $p \in \PrFm$; \qquad $\rho(\mathbf{0}) = \rho(\mathbf{1}) = 1$;
		\item $\rho(A \bs B) = \rho(B/A) = \rho(A \cdot B) = \rho(A\wedge B) = \rho(A \vee B) = \rho(A) \oplus \rho(B) \oplus 1$ (here $\alpha \oplus \beta$ is the natural sum of ordinals\footnote{The Hessenberg natural sum is defined as follows: $(\omega^n\cdot a_n + \dotsc + \omega^0\cdot a_0) \oplus (\omega^n\cdot b_n + \dotsc + \omega^0\cdot b_0) = \omega^n\cdot (a_n+b_n) + \dotsc + \omega^0\cdot (a_0+b_0)$ for $a_i,b_i \in \Nat$.});
		\item $\rho(A^\ast) = \rho(\bang A) = \rho(A) \cdot \omega + 1$; \qquad $\rho(\nabla A) = \rho(A) + 1$;
		\item $\rho(A_1,\dotsc,A_n \vdash B) = \rho(A_1) \oplus \dotsc \oplus \rho(A_n) \oplus \rho(B)$.
	\end{itemize}
	
	Note that $\rho(\Pi \vdash B) < \omega^\omega$ for any sequent $\Pi \vdash B$. 
	
	\begin{remark}\label{remark_rank_increases}
		It is proved in \cite[Lemma 3.1]{KuznetsovS23} that, for each rule of $\ACTMult$ except for $(\nabla P_i)$, the rank of each its premise is strictly less that the rank of its conclusion. Clearly, the rules $(\nabla P_i)$ violate this property since they only change the order of formulas in the antecedent. To handle the permutation rules $(\nabla P_i)$, one can reformulate $\ACTMult$ by introducing \emph{generalized rules} \cite[p. 264]{KuznetsovS23}. A generalized rule is a rule which is not $(\nabla P_i)$, followed by one application of a permutation rule, e.g.:
		$$
		\infer[(\nabla P_1)]
		{
			\nabla D, \Gamma, A / B, \Pi_1,\Pi_2, \Delta \vdash C
		}
		{
			\infer[(/ L)]{\Gamma, A / B, \Pi_1,\nabla D, \Pi_2, \Delta \vdash C}{\Gamma, A, \Delta \vdash C & \Pi_1,\nabla D, \Pi_2 \vdash B}
		}
		$$
		
		\noindent
		After replacing each rule of $\ACTMult$ by its generalized version, permutation rules themselves become unnecessary, and one can remove them. Doing this explicitly, however, would make the reasonings extremely cumbersome. If needed, we shall implicitly assume that $\ACTMult$ is defined using generalized rules.
	\end{remark}

	\subsection{Basic Derivations}\label{ssec_basic_derivations}
	
	From now on, let us consider the fragment of $\ACTMult$ without constants $\mathbf{0},\mathbf{1}$ and also without $/$, $\vee$. This is because we shall not use these constants and operations in the main construction. Further reasonings could be done in presence of $\mathbf{0}, \mathbf{1}, /, \vee$ as well but this would just make definitions and proofs pointlessly longer.
	
	Below we introduce the notion of a basic derivation, which can be viewed as a normal form of derivations in $\ACTMult$.

	\begin{definition}\label{def_basic}
		A derivation $\DerTree$ of a sequent in $\ACTMult$ is \emph{basic} if it satisfies the following properties:
		\begin{enumerate}
			\item\label{def_basic_bs_L} If an occurrence of the formula $p \bs A$ is principal in a rule application of $(\bs L)$ for $p \in \PrFm$, then this rule application must be of the form
			$$
			\infer[(\bs L)]{\Gamma, p, p \bs A, \Delta \vdash C}{\Gamma, A, \Delta \vdash C & p \vdash p}
			$$
			\item\label{def_basic_cdot_R} If an occurrence of the formula $p \cdot q$ is principal in a rule application of $(\cdot R)$ for $p,q \in \PrFm$, then this rule application must be of the form
			$$
			\infer[(\cdot R)]{p, q \vdash p \cdot q}{p \vdash p & q \vdash q}
			$$
			\item\label{def_basic_wedge_L} Assume that $A_1 \wedge A_2$ is a principal formula in an application of $(\wedge L_i)$ for some $i\in \{1,2\}$. Let $A_i = B \bs A$. Then the rule application of $(\wedge L_i)$ must be preceded by a rule application of $(\bs L)$ with $A_i$ being principal.
		\end{enumerate}
	\end{definition}
	
	\begin{lemma}\label{lemma_basic_derivation}
		If a sequent $\Theta \vdash D$ is derivable in $\ACTMult$, then there exists its basic derivation.
	\end{lemma}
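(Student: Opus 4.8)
The plan is to argue by induction on the rank $\rho(\Theta \vdash D)$, working throughout with the generalized rules of Remark~\ref{remark_rank_increases} so that every rule application strictly decreases this rank; this is precisely what lets us handle the permutation rules $(\nabla P_i)$. Given a derivation of $\Theta \vdash D$, the induction hypothesis first lets us replace the derivation of each premise of the last rule by a basic one. After this, the only rule application that can still violate Definition~\ref{def_basic} is the last one, and it can do so only in one of the three ways listed there: a $(\bs L)$ with primitive-headed principal formula $p \bs A$ whose right premise is not literally $p \vdash p$; a $(\cdot R)$ with principal formula $p \cdot q$ ($p, q$ primitive) not of the displayed shape; or a $(\wedge L_i)$ that selects a conjunct $A_i = B \bs A$ whose premise is not the conclusion of a $(\bs L)$ on $A_i$. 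I treat each case by a local ``grafting'' transformation that introduces no new violation, so no further induction on rank is needed except in the $(\cdot R)$ case.

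For the $(\bs L)$ case, write the last rule as producing $\Gamma, \Pi, p \bs A, \Delta \vdash C$ from a basic derivation $\DerTree_1$ of $\Gamma, A, \Delta \vdash C$ and a basic derivation $\DerTree_2$ of $\Pi \vdash p$, with $\Pi \neq p$. In the fragment fixed above (no $\mathbf{0}, \mathbf{1}, /, \vee$) no rule alters the succedent except the right rules, and no right rule has a primitive succedent; hence, tracing the succedent $p$ upward through $\DerTree_2$, one meets only left rules and reaches axiom leaves $p \vdash p$. At each such leaf I graft the basic application
$$
\infer[(\bs L)]{\Gamma, p, p\bs A, \Delta \vdash C}{\DerTree_1 & p \vdash p}
$$
and push the added context — $\Gamma$ at the front, $p\bs A, \Delta$ at the back, with the succedent changed to $C$ — down through the left rules lying between these leaves and the root of $\DerTree_2$; those rules act inside $\Pi$ or are instances of $(\ast L_\omega)$, so they survive the change, and the grafted applications, the carried-over side derivations, and $\DerTree_1$ all remain basic. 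This step is in effect the elimination of a cut against a primitive cut formula and could alternatively be read off from the admissibility of cut.

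The $(\cdot R)$ case is, I expect, where the main difficulty lies, because the basic shape of $(\cdot R)$ on $p \cdot q$ forces \emph{both} premises to be trivial axioms, so grafting into a single premise is impossible. Say the last rule produces $\Gamma', \Delta' \vdash p \cdot q$ from basic derivations $\DerTree'$ of $\Gamma' \vdash p$ and $\DerTree''$ of $\Delta' \vdash q$, and, by symmetry, assume $\Gamma' \neq p$. A sequent $\Gamma' \vdash p$ with $p$ primitive is underivable when $\Gamma'$ is empty, and when $\Gamma'$ is a single formula its derivability forces $\Gamma' = p$; hence $\Gamma' \neq p$ gives $\rho(\Gamma') \geq 2$, so $\rho(p, \Delta' \vdash p \cdot q) < \rho(\Gamma', \Delta' \vdash p \cdot q)$. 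Since $p, \Delta' \vdash p \cdot q$ is derivable — apply $(\cdot R)$ to $p \vdash p$ and $\DerTree''$ — the induction hypothesis supplies a basic derivation of it, which I then graft onto $\DerTree'$ exactly as in the previous paragraph, now appending $\Delta'$ to each antecedent of the $p$-carrying part of $\DerTree'$ and changing its succedent to $p \cdot q$; the result is a basic derivation of $\Gamma', \Delta' \vdash p \cdot q$.

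Finally, in the $(\wedge L_i)$ case the last rule produces $\Gamma, A_1 \wedge A_2, \Delta \vdash C$ from a basic derivation $\DerTree_0$ of $\Gamma, A_i, \Delta \vdash C$ with $A_i = B \bs A$. I trace the distinguished occurrence of $A_i$ upward through $\DerTree_0$; since the calculus has no weakening, it cannot simply vanish, so it eventually becomes principal in a $(\bs L)$-application, possibly after one first $\eta$-expands an axiom $A_i \vdash A_i$ that it reaches (along the way it may be carried into side premises of $(\bs L)$-applications on compound implications, which does not affect basicness). Splitting the derivation at the node $M$ where this occurrence becomes principal, the part below $M$ is a derivation context in which the distinguished $A_i$ occurs only as a side formula, so $A_i$ may be replaced by $A_1 \wedge A_2$ throughout it; I then fill the hole with $(\wedge L_i)$ immediately followed by the $(\bs L)$-application standing at $M$ together with the subderivations above it. The new $(\wedge L_i)$ is now immediately followed by a $(\bs L)$ on $A_i$, and all other rules are basic copies of rules of $\DerTree_0$ (or of the short $\eta$-expansion), so the result is basic. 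Keeping track of how the selected conjunct moves through $\DerTree_0$ here, and the rank estimate underpinning the $(\cdot R)$ case, are the two points where I expect the genuine work to be.
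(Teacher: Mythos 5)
Your proposal is correct and follows essentially the same route as the paper: induction on $\rho(\Theta\vdash D)$ over the generalized rules, with your three ``grafting'' transformations being exactly the paper's $\Aug(\DerTree;(\Gamma;\Delta;C))$ operation (pushing added context through the purely left-rule derivation of a sequent with primitive succedent) and its $\DerTree\vert_{A\wedge B}(\Gamma;\Delta)$ operation (lifting the $(\wedge L_i)$ up to the nodes where the selected conjunct becomes principal). The only divergence is in the $(\cdot R)$ case, where the paper simply composes two graftings, $\Tree_1[\Tree_2[\DerTree_3]]$, rather than re-invoking the induction hypothesis on $p,\Delta'\vdash p\cdot q$; your variant also works, but note that your intermediate claim that derivability of $\Gamma'\vdash p$ with $\Gamma'$ a single formula forces $\Gamma'=p$ is false (e.g.\ $\nabla p\vdash p$ or $p\wedge q\vdash p$) --- the bound $\rho(\Gamma')\ge 2$ you actually need still holds, since the only formulas of rank $1$ in this fragment are primitive and no primitive $q\ne p$ yields $q\vdash p$.
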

	
	To explain how to transform any derivation of a given sequent into a basic one we introduce several operations on derivation trees.
	
	\begin{definition}
		An \emph{incomplete derivation tree} is a derivation tree $\Tree$ in $\ACTMult+\mathbb{S}$ for $\mathbb{S}$ being a set of sequents added to $\ACTMult$ as additional axioms. The sequent that stands in the root of this tree is denoted by $\res(\Tree)$.
	\end{definition}
	
	In other words, some of the leaves of an incomplete derivation tree $\Tree$ are not axioms of $\ACTMult$; let us call them \emph{hypotheses in $\Tree$}. Let us denote the set of hypotheses in $\Tree$ by $\hyp(\Tree)$.
	
	\begin{definition}
		Let $\DerTree$ be a derivation tree such that $\res(\DerTree) = \Pi \vdash p$ for $p \in \PrFm$. Given any sequent $\Gamma, \Delta \vdash C$ not equal to $\vdash p$, let us define the function $\Aug(\DerTree;(\Gamma;\Delta;C))$ that transforms $\DerTree$ into an incomplete derivation tree $\Tree$ such that $\res(\Tree) = \Gamma, \Pi, \Delta \vdash C$. The definition is by induction on the size of $\DerTree$.
		\begin{enumerate}
			\item $\Aug(p \vdash p;(\Gamma;\Delta;C)) \eqdef \Gamma, p, \Delta \vdash C$.
			\item Assume that $\DerTree$ is of the following form:
			$$
			\infer[(r)]
			{
				\Pi \vdash p
			}
			{
				(\DerTree_n)_{n < \alpha}
			} 
			$$
			Here $\alpha \le \omega$ and $(r)$ is one of the rules of $\ACTMult$ but not $(\bs L)$; $\DerTree_n$ are derivation subtrees with the conclusions being premises of $(r)$. Note that $\res(\DerTree_n)$ is of the form $\Pi_n \vdash p$ because, for any left rule except for $(\bs L)$, the succedent of each premise coincides with the succedent of the conclusion. Then
			$$
			\Aug(\DerTree;(\Gamma;\Delta;C)) \eqdef 
			\infer[(r)]
			{
				\Gamma, \Pi, \Delta \vdash C
			}
			{
				(\Aug(\DerTree_n;(\Gamma;\Delta;C)))_{n < \alpha}
			}
			$$
			
			\item Assume that $\DerTree$ is of the following form:
			$$
			\infer[(\bs L)]
			{
				\Pi \vdash p
			}
			{
				\DerTree_1
				&
				\DerTree_2
			} 
			$$
			Here $\res(\DerTree_1) = \Xi, A, \Psi \vdash p$, $\res(\DerTree_2) = \Theta \vdash B$, and $\Pi = \Xi, \Theta, B \bs A, \Psi$. Then
			$$
			\Aug(\DerTree;(\Gamma;\Delta;C)) \eqdef  
			\infer[(r)]
			{
				\Gamma, \Pi, \Delta \vdash C
			}
			{
				\Aug(\DerTree_1;(\Gamma;\Delta;C))
				&
				\DerTree_2
			}
			$$
			
		\end{enumerate}
		
	\end{definition}
	
	\begin{example}
		$$
		\DerTree = 
		\infer[(\bs L)]
		{
			p, p \bs q \vdash q
		}
		{
			p \vdash p & q \vdash q
		}
		\quad\Rightarrow\quad
		\Aug(\DerTree;(X,Y;;Z))
		=
		\infer[(\bs L)]
		{
			X,Y, p, p \bs q \vdash Z
		}
		{
			X, Y, p \vdash Z & q \vdash q
		}
		$$
		
	\end{example}
	The following lemma can easily be proved by transfinite induction on $\rho(\Pi \vdash p)$.
	
	\begin{lemma}
		Let $\DerTree$ be a derivation tree of $\Pi \vdash p$. Then $\Tree = \Aug(\DerTree;(\Gamma;\Delta;C))$ is an incomplete derivation tree such that $\res(\Tree) = \Gamma, \Pi, \Delta \vdash C$ and $\hyp(\Tree)$ consists of only one sequent $\Gamma, p, \Delta \vdash C$.
	\end{lemma}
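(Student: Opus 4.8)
The plan is to argue by transfinite induction on $\rho(\Pi \vdash p)$, with a case split on the last rule of $\DerTree$ that mirrors the three clauses in the definition of $\Aug$. Throughout I read $\ACTMult$ with the generalized rules of Remark~\ref{remark_rank_increases} (extending the definition of $\Aug$ to them in the evident way); this is what makes the induction legitimate, since then \emph{every} rule strictly decreases $\rho$, whereas a bare permutation rule $(\nabla P_i)$ does not. \emph{Base case.} If $\DerTree$ is the axiom $p \vdash p$, then by definition $\Tree$ is the one-node tree $\Gamma, p, \Delta \vdash C$; since here $\Pi = p$, this is $\Gamma, \Pi, \Delta \vdash C$, and its unique leaf $\Gamma, p, \Delta \vdash C$ is not an axiom of $\ACTMult$ — the only axiom it could be is $p \vdash p$, which would force $\Gamma, \Delta$ empty and $C = p$, i.e.\ $(\Gamma, \Delta \vdash C) = (\vdash p)$, contrary to hypothesis — so $\hyp(\Tree) = \{\Gamma, p, \Delta \vdash C\}$, as required.

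\emph{Inductive step, last rule not $(\bs L)$.} Because the succedent $p$ of $\res(\DerTree)$ is primitive, the last rule $(r)$ of $\DerTree$ cannot be a right rule; hence it is a left rule, and each of its premises has the form $\Pi_n \vdash p$. By Remark~\ref{remark_rank_increases} every such premise has rank strictly below $\rho(\Pi \vdash p)$, so the induction hypothesis applies to each premise subtree $\DerTree_n$ (the target triple $(\Gamma;\Delta;C)$ is unchanged, and $(\Gamma,\Delta\vdash C)\neq(\vdash p)$ still holds), yielding incomplete derivation trees with roots $\Gamma, \Pi_n, \Delta \vdash C$ and each with the single hypothesis $\Gamma, p, \Delta \vdash C$. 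It then remains to observe that surrounding the antecedent of a left rule by a fixed context $\Gamma$ on the left and $\Delta$ on the right again yields a legitimate instance of the same (generalized) rule, since the active formulas lie strictly between $\Gamma$ and $\Delta$; so applying $(r)$ to the augmented premises produces a tree $\Tree$ with root $\Gamma, \Pi, \Delta \vdash C$, and $\hyp(\Tree)$ is the union of the hypothesis sets of the augmented subtrees, namely the singleton $\{\Gamma, p, \Delta \vdash C\}$ — which stays a singleton even when $(r) = (\ast L_\omega)$ has $\omega$ premises.

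\emph{Inductive step, last rule $(\bs L)$.} Here $\DerTree$ ends with $(\bs L)$ applied to subtrees $\DerTree_1$ of $\Xi, A, \Psi \vdash p$ and $\DerTree_2$ of $\Theta \vdash B$, with $\Pi = \Xi, \Theta, B\bs A, \Psi$, and $\Aug$ recurses only into $\DerTree_1$. Since $\rho(\Xi, A, \Psi \vdash p) < \rho(\Pi \vdash p)$ by Remark~\ref{remark_rank_increases}, the induction hypothesis gives that $\Aug(\DerTree_1;(\Gamma;\Delta;C))$ is an incomplete derivation tree with root $\Gamma, \Xi, A, \Psi, \Delta \vdash C$ and unique hypothesis $\Gamma, p, \Delta \vdash C$, while $\DerTree_2$, being a subtree of the complete derivation $\DerTree$, has no hypotheses. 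Applying $(\bs L)$ to these two subtrees, with left context $\Gamma, \Xi$ and right context $\Psi, \Delta$, produces a tree $\Tree$ with root $\Gamma, \Xi, \Theta, B\bs A, \Psi, \Delta \vdash C = \Gamma, \Pi, \Delta \vdash C$ and $\hyp(\Tree) = \{\Gamma, p, \Delta \vdash C\} \cup \hyp(\DerTree_2) = \{\Gamma, p, \Delta \vdash C\}$, completing the induction. The only point that needs any care is the well-foundedness of the induction, i.e.\ the appeal to Remark~\ref{remark_rank_increases} together with the switch to generalized rules; everything else is the routine bookkeeping that the surgery performed by $\Aug$ respects the rule schemes and preserves the single open leaf $\Gamma, p, \Delta \vdash C$.
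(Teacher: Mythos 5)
Your proof is correct and follows exactly the route the paper intends: the paper gives no proof beyond the remark that the lemma ``can easily be proved by transfinite induction on $\rho(\Pi \vdash p)$,'' and you carry out precisely that induction, with the case split matching the three clauses of the definition of $\Aug$. Your extra care about switching to generalized rules (so that every rule strictly decreases the rank) and about checking that the leaf $\Gamma, p, \Delta \vdash C$ is not itself an axiom are exactly the points the paper leaves implicit.
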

	
	\begin{definition}
		Let $\Tree$ be an incomplete derivation tree such that $\hyp(\Tree) = \{\Theta \vdash C\}$; let $\DerTree$ be a derivation tree such that $\res(\DerTree) = \Theta \vdash C$. Then $\Tree[\DerTree]$ is the tree obtained from $\Tree$ by replacing each its leaf with the label $\Theta \vdash C$ by $\DerTree$.
	\end{definition}
	
	Clearly, $\Tree[\DerTree]$ is a derivation tree.
	
	\begin{definition}\label{def_DAB}
		Let $\DerTree$ be a derivation tree of $\Gamma, A, \Delta \vdash D$. Let $B$ be some formula. Then $\DerTree\vert_{A \wedge B}(\Gamma;\Delta)$ is a tree defined as follows:
		\begin{enumerate}
			\item If $A$ is principal in the last rule application in $\DerTree$, then 
			$$
			\DerTree\vert_{A \wedge B}(\Gamma;\Delta)
			\eqdef
			\infer[(\wedge L_1)]{\Gamma, A \wedge B, \Delta \vdash D}{\DerTree}
			$$
			\item Let $A$ be not principal and let the last rule application look as follows:
			$$
			\infer[]{\Gamma, A, \Delta \vdash D}{(\DerTree_n)_{n < \alpha}}
			$$
			
			Here $\res(\DerTree_n) = \Theta_n \vdash D_n$. For some of $n < \alpha$, it holds that $\Theta_n = \Gamma_n, A, \Delta_n$ where the distinguished occurrence of $A$ corresponds to the distinguished occurrence of $A$ in $\Gamma, A, \Delta \vdash D$. For such $n$, let $\DerTree^\prime_n = \DerTree_n\vert_{A \wedge B}(\Gamma_n;\Delta_n)$; otherwise, let $\DerTree^\prime_n = \DerTree_n$. Then
			$$
			\DerTree\vert_{A \wedge B}(\Gamma;\Delta)
			\eqdef
			\infer[]{\Gamma, A \wedge B, \Delta \vdash D}{(\DerTree^\prime_n)_{n < \alpha}}
			$$
		\end{enumerate}
	\end{definition}
	The construction presented in Definition \ref{def_DAB} is developed to construct a derivation of $\Gamma,A \wedge B, \Delta \vdash D$ such that $(\wedge L_1)$ is applied to $A \wedge B$ only where either $A$ or $B$ must be principal. Clearly, $\DerTree\vert_{A \wedge B}(\Gamma;\Delta)$ is a derivation tree of the sequent $\Gamma,A \wedge B,\Delta \vdash D$.
	
	Using the constructions defined above we can prove Lemma \ref{lemma_basic_derivation}.
	
	\begin{proof}[Proof of Lemma \ref{lemma_basic_derivation}]
		The proof is by transfinite induction on $\rho(\Theta \vdash D)$. The axiom case is trivial. To prove the induction step, fix a derivation of $\Theta \vdash D$ and consider the last rule applied in it.
		
		\textbf{Case 1.} The last rule applied is $(\bs L)$ and the principal formula is $p \bs A$ for $p \in \PrFm$:
		$$
		\infer[(\bs L)]
		{
			\Gamma, \Pi, p \bs A, \Delta \vdash D
		}
		{
			\DerTree_1
			&
			\DerTree_2
		}
		$$
		
		Here $\res(\DerTree_1) = \Gamma, A, \Delta \vdash D$, $\res(\DerTree_2) = \Pi \vdash p$. Let us apply the induction hypothesis to both these sequents; consequently, there are basic derivations $\DerTree^\prime_1$, $\DerTree^\prime_2$ such that $\res(\DerTree^\prime_i) = \res(\DerTree_i)$. Consider the incomplete derivation tree $\Tree = \Aug(\DerTree^\prime_2;(\Gamma; p \bs A, \Delta; D))$. Its root is $\Gamma, \Pi, p \bs A, \Delta \vdash D = \Theta \vdash D$. The only hypothesis in $\Tree$ is $\Gamma, p, p \bs A, \Delta \vdash D$.
		
		Let $\DerTree^{\prime\prime}_1$ be the following derivation tree:
		$$
		\infer[(\bs L)]
		{
			\Gamma, p, p \bs A, \Delta \vdash D
		}
		{
			\DerTree^\prime_1
			&
			p \vdash p
		}
		$$
		
		Then $\Tree[\DerTree^{\prime\prime}_1]$ is a derivation of $\Theta \vdash D$. One can easily verify that it is basic. 
		
		\textbf{Case 2.} The last rule is $(\cdot R)$ and the principal formula $D$ equals $p_1 \cdot p_2$ for $p_1, p_2 \in \PrFm$:
		$$
		\infer[(\cdot R)]
		{
			\Theta_1, \Theta_2 \vdash p_1 \cdot p_2
		}
		{
			\DerTree_1
			&
			\DerTree_2
		}
		$$
		
		Here $\res(\DerTree_i) = \Theta_i \vdash p_i$ (for $i = 1,2$). Let us apply the induction hypothesis to both these sequents; consequently, there are basic derivations $\DerTree^\prime_1$, $\DerTree^\prime_2$ such that $\res(\DerTree^\prime_i) = \res(\DerTree_i)$. Let $\Tree_1 = \Aug(\DerTree_1;(;\Theta_2;D))$ (the semicolon right after the left bracket means that the first sequence of formulas is empty). This is an incomplete derivation tree such that $\hyp(\Tree_1) = \{p_1, \Theta_2 \vdash D\}$. Now, let $\Tree_2 = \Aug(\DerTree_2;(p_1;;D))$; clearly, $\res(\Tree_2) = p_1, \Theta_2 \vdash D$ and $\hyp(\Tree_2) = \{p_1,p_2 \vdash p_1 \cdot p_2\}$. Finally, let $\DerTree_3$ be the only derivation tree of $p_1,p_2 \vdash p_1 \cdot p_2$. Then $\Tree_1[\Tree_2[\DerTree_3]]$ is the desired derivation tree.
		
		\textbf{Case 3.} Let the last rule applied be $(\wedge L_i)$ and let the principal formula be $A_1 \wedge A_2$ where $A_i = B \bs A$. Assume that $i=1$ (the case $i=2$ is dealt with similarly), i.e. $A_1 \wedge A_2 = (B \bs A) \wedge A_2$. The derivation of $\Theta \vdash D$ thus looks as follows:
		$$
		\infer[(\wedge L_1)]
		{
			\Gamma, (B \bs A) \wedge A_2, \Delta \vdash D
		}
		{
			\DerTree_0
		}
		$$
		
		Here $\res(\DerTree_0) = \Gamma, B \bs A, \Delta \vdash D$. Let us apply the induction hypothesis to the latter sequent and obtain a basic derivation $\DerTree^\prime_0$ for it. Finally, the following derivation is basic:
		$$
		\infer[(\wedge L_1)]
		{
			\Gamma, (B \bs A) \wedge A_2, \Delta \vdash D
		}
		{
			\DerTree^\prime_0\vert_{(B \bs A) \wedge A_2}(\Gamma;\Delta)
		}
		$$
		
		\textbf{Case 4.} None of the cases considered above is the case. In general, the derivation of $\Theta \vdash D$ looks as follows:
		$$
		\infer[(r)]
		{
			\Theta \vdash D
		}
		{
			(\DerTree_i)_{i < \alpha}
		}
		$$
		
		Here $\alpha \le \omega$. One can apply the induction hypothesis to $\res(\DerTree_i) = (\Theta_i \vdash D_i)$ for each $i<\alpha$ and obtain a basic derivation $\DerTree^\prime_i$ of $\Theta_i \vdash D_i$. Then the following is a basic derivation as well:
		$$
		\infer[(r)]
		{
			\Theta \vdash D
		}
		{
			(\DerTree^\prime_i)_{i < \alpha}
		}
		$$
		
		This completes the proof.
	\end{proof}
	
	\subsection{Hyperarithmetical Hierarchy}\label{ssec_hyperarithmetic}
	
	The hyperarithmetical hierarchy extends the notions of $\Delta^0_\alpha$, $\Sigma^0_\alpha$ and $\Pi^0_\alpha$-sets to all computable ordinals $\alpha < \omega_1^{CK}$. The standard definitions use Kleene's $\mathcal{O}$ notation for ordinals. It is, however, quite complex; for example, checking whether a number $a \in \Nat$ is a notation of some ordinal is $\Pi^1_1$-complete; besides, each infinite ordinal has infinitely many different notations. Since we shall consider the hyperarithmetical hierarchy only up to the level $\omega^\omega$ we can use a simple bijective notation. Namely, we can exploit the fact that each ordinal below $\omega^\omega$ is of the form $\omega^{N} \cdot g_N +\dotsc+\omega^{0} \cdot g_0$ where $g_i \in \Nat$. Thus one can encode this ordinal by the tuple $\langle g_0,\dotsc,g_N \rangle$. Recall that there is an effective G\"{o}del numbering of tuples of natural numbers, e.g. the standard one: $\langle i_1,\dotsc,i_n \rangle \leftrightarrow p_1^{i_1}\cdot\dotsc\cdot p_n^{i_n}$ where $p_j$ is the $j$-th prime number.  Hereinafter, we usually equate a tuple from $\Nat^{<\Nat}$ with its encoding.
	
	\begin{definition}
		For $\alpha < \omega^\omega$, let $\pi(\alpha) = \langle g_0,\dotsc,g_N \rangle$ such that $\alpha = \omega^{N} \cdot g_N +\dotsc+\omega^{0} \cdot g_0$ where $g_i \in \Nat$ and $g_N \ne 0$. The tuple $\pi(\alpha)$ is called the \emph{polynomial notation for $\alpha$}. 
	\end{definition}
	
	Consider the set $\PN = \{\langle g_0,\dotsc,g_N \rangle \mid n \ge 0, g_i \in \Nat, g_N \ne 0 \} = \Nat \setminus \{0\}$ of all polynomial notations. Each notation corresponds to exactly one ordinal below $\omega^\omega$. Moreover, let us define the well-ordering $\prec$ on $\PN$ induced by the ordering $\in$ on ordinals: $\langle g_0,\dotsc,g_M \rangle \prec \langle h_1,\dotsc,h_N \rangle$ if and only if either $M < N$ or $M=N$ and $ \langle g_0,\dotsc,g_M \rangle$ is less than $ \langle h_1,\dotsc,h_N \rangle$ according to the antilexicographic order. Thus, $(\PN,\prec)$ is a computable well-ordering of the order type $\omega^\omega$. Using it we can define the jump hierarchy:
	\begin{align*}
		H(0) &\eqdef \emptyset;
		\\
		H(\alpha+1) &\eqdef H(\alpha)^\prime \mbox{ for $\alpha < \omega^\omega$;}
		\\
		H(\alpha) &\eqdef \left\{\langle \pi(\beta),n \rangle \mid \beta < \alpha, \; n \in H(\beta)\right\} \mbox{ for $\alpha \le \omega^\omega$ being limit.}
	\end{align*}
	
	For $\alpha \le \omega^\omega$, the set $H(\alpha)$ defined in this way is Turing-equivalent to the set $H(a)$ defined using Kleene's $\mathcal{O}$ for some $a \in \mathcal{O}$ such that $\vert a \vert_{\mathcal{O}} = \alpha$ \cite[Exercise 16-80]{Rogers67}. Thus our definition is equivalent to the standard one. Note that $\Delta^0_{\omega^\omega}$ consists of sets computable relative to $H(\omega^\omega)$, hence $H(\omega^\omega)$ is by the definition $\Delta^0_{\omega^\omega}$-complete in terms of Turing reducibility.
	
	Another way of defining the hyperarithmetical hierarchy is by means of computable infinitary formulas. Let us give the definitions according to \cite{Montalban23}. Firstly, let us recall some basic notation. Let $f : \Nat \to \Nat$ be some function. Then $\Phi^f_e:\Nat \to \Nat$ denotes the $e$-th Turing operator when it uses $f$ as oracle. Let $W^f_e$ denote the domain of $\Phi^f_e$. If $f(x) \equiv 0$, then let us denote these functions and sets as $\Phi_e$ and $W_e$ resp. For $\sigma \in \Nat^{< \Nat}$, $\Phi^\sigma_{e}(n)$ equals $\Phi^f_e(n)$ if $\sigma = (f(0),\dotsc,f(k))$ is a finite prefix of $f$ and the $e$-th Turing operator $\Phi^f_e$, given $n$ as input, runs for at most $k$ steps, uses only entries from $\sigma$ when appealing to the oracle and finishes in the final state (i.e. completes the computation process); otherwise, $\Phi^\sigma_{e}(n)$ diverges. Let $\HALT(n,e,\sigma)$ be the predicate ``$\Phi^\sigma_{e}(n)$ converges''. Let $\Halt(n,e,y) \eqdef \HALT(n,e,0^y)$. Note that $\HALT$ and $\Halt$ are computable.
	
	\begin{remark}\label{remark_Halt}
		$n \in W_e$ if and only if $\exists y \Halt(n,e,y)$ is true.
	\end{remark}
	
	\begin{definition}[\cite{Montalban23}]\label{def_computable_infinitary_formulas}
		We fix a countable set of variables $\{x_n \mid 1 \le n < \omega\}$. $\Sigma_0$- and $\Pi_0$-formulas are finitary quantifier-free formulas in the language $(+,\times,0,1,<)$ of arithmetic. For each $i \in \Nat$, let us choose an effective numbering of finitary quantifier-free arithmetical formulas such that the set of their free variables is contained in $\{x_1,\dotsc,x_i\}$; in other words, we fix a computable bijection from the set of such formulas onto $\Nat$. By $S^\Sigma_{0}$ we denote the set of quadruples $\langle \Sigma, \pi(0), i, c \rangle$ where $c$ is the number of a quantifier-free formula $\varphi$ with free variables from $\{x_1,\dotsc,x_i\}$ according to the fixed numbering. Let $\varphi^{\Sigma_0}_{c,i}(x_1,\dotsc,x_i) = \varphi^{\Pi_0}_{c,i}(x_1,\dotsc,x_i)$ denote this formula. The set $S^\Pi_0$ consists of quadruples of the form $\langle \Pi, \pi(0), i, c \rangle$ where $c$ and $i$ are defined in the same way\footnote{Formally, in these tuples, $\Sigma$ and $\Pi$ are just two different numbers, e.g. $1$ and $2$.}.
		\\
		For $0 < \alpha < \omega^\omega$, let $S^\Sigma_{\alpha}$ be the set of quadruples $\langle \Sigma,\pi(\alpha),j,e \rangle$ where $\alpha < \omega^\omega$ and $j,e \in \omega$. The set $S^\Pi_{\alpha}$ consists of similar quadruples of the form $\langle \Pi,\pi(\alpha),j,e \rangle$.
		Quadruples $\langle \Sigma,\pi(\alpha),j,e \rangle$ and $\langle \Pi,\pi(\alpha),j,e \rangle$ (we call them \emph{indices}) correspond to the infinitary formulas $\varphi^{\Sigma_\alpha}_{e,j}$ and $\varphi^{\Pi_\alpha}_{e,j}$ defined below by mutual transfinite recursion:
		\[
		\varphi^{\Sigma_\alpha}_{e,j}(x_1,\dotsc,x_j) = \bigdoublevee\limits_{\substack{\beta < \alpha \\ \langle \pi(\beta),k,e^\prime\rangle  \in W_e}}
		\exists x_{j+1} \dotsc \exists x_{j+k} \; \varphi^{\Pi_\beta}_{e^\prime,j+k}(x_1,\dotsc,x_{j+k}).
		\]
		\[
		\varphi^{\Pi_\alpha}_{e,j}(x_1,\dotsc,x_j) = \bigdoublewedge\limits_{\substack{\beta < \alpha \\ \langle \pi(\beta),k,e^\prime \rangle  \in W_e}}
		\forall x_{j+1} \dotsc \forall x_{j+k} \; \varphi^{\Sigma_\beta}_{e^\prime,j+k}(x_1,\dotsc,x_{j+k}).
		\]
	\end{definition}
	
	The definition of truth of a formula is standard, it is inherited from the infinitary logic $\LogicOOO$. If $\varphi^{\Sigma_\alpha}_{e,j}(x_1,\dotsc,x_j)$ is true under an assignment of variables $(n_1,\dotsc,n_j)$, we write $\Nat \vDash \varphi^{\Sigma_\alpha}_{e,j}(n_1,\dotsc,n_j)$ (the same is for $\Pi_\alpha$).
	
	\begin{remark}\label{remark_manipulating_formulas}
		Although we have introduced computable infinitary formulas only in the ``canonical form'' we can apply usual logical operations to them \cite[Theorem 7.1]{AshK00}. E.g. given indices $c_1$, $c_2$ for two computable $\Sigma_\alpha$-formulas $\varphi_1$ and $\varphi_2$, one can compute the index of a $\Sigma_\alpha$-formula logically equivalent to $\varphi_1 \vee \varphi_2$ or to $\varphi_1 \wedge \varphi_2$; also, given an index $c$ for a $\Sigma_\alpha$-formula $\varphi$, one can compute the index of a $\Pi_\alpha$-formula logically equivalent to $\neg \varphi$. Besides, there is a computable function $\mathrm{sub}$ such that 
		$\mathrm{sub}(\langle \langle X,\pi(\alpha),i,e \rangle ,  \langle n_1,\dotsc,n_i \rangle \rangle )$
		is the index of the formula $\varphi^{X_\alpha}_{e,i}\left(n_1,\dotsc,n_i\right)$.
		To prove these statements one needs effective transfinite recursion \cite[I.4.1]{Montalban23}. 
	\end{remark}
	
	Effective transfinite recursion is a very useful tool that allows one to manipulate infinitary computable formulas in a computable way. Let us recall this method according to \cite{Montalban23}. For $p \in \PN$ and $e \in \Nat$, let $e\restriction_{\prec p}$ be an index for the computable function obtained by restricting the domain of $\Phi_e$ to $\PN \restriction_{\prec p}$: $\Phi_{e\restriction_{\prec p}}(p^\prime) = \Phi_e(p^\prime)$ for $p^\prime \prec p$ and $\Phi_{e\restriction_{\prec p}}(p^\prime)$ diverges otherwise. Then, for every computable function $\Psi$, there exists an index $e$ for a computable function $\Phi_e$ such that $\Phi_e(p) = \Psi(p,e \restriction_{\prec p})$. The function $\Psi$ is a recursive description of $\Phi_e$ because, when $\Psi(p,e\restriction_{\prec p})$ is being computed, the function $\Phi_{e\restriction_{\prec p}}$ can be used. We are going to use effective transfinite recursion usually without explicitly defining $\Psi$; the only proof where $\Psi$ will be described in detail is that of Proposition \ref{prop_Der}.
	
	\begin{definition}
		Let us define the satisfaction predicate as follows:
		$$
		\SAT_{<\beta} \eqdef \left\{\langle \langle X,\pi(\alpha),i,e\rangle , \langle n_1,\dotsc,n_i \rangle \rangle  \mid \alpha < \beta, \, X \in \{\Sigma,\Pi\}, \, \Nat \vDash \varphi^{X_\alpha}_{e,i}(n_1,\dotsc,n_i)\right\}.
		$$
	\end{definition}
	We shall be mainly concerned with the satisfaction predicate $\SATOO$ for the level $\omega^\omega$. Its relation to the jump hierarchy is explained by the following proposition.
	
	\begin{proposition}\label{proposition_SAT_H}
		$\SATOO$ is m-equivalent to $H(\omega^\omega)$.
	\end{proposition}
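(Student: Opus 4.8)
The plan is to establish the two m-reductions $\SATOO\le_m H(\omega^\omega)$ and $H(\omega^\omega)\le_m\SATOO$ separately, both by effective transfinite recursion along $(\PN,\prec)$. I will use throughout that for every $\beta<\omega^\omega$ one has, uniformly in $\pi(\beta)$, the $1$-reductions $H(\beta)\le_1 H(\omega^\omega)$ (via $n\mapsto\langle\pi(\beta),n\rangle$) and $\overline{H(\beta)}\le_1 H(\omega^\omega)$ (via $\overline{H(\beta)}\le_1 H(\beta+1)\le_1 H(\omega^\omega)$), that $H(\delta)\le_1 H(\delta')$ whenever $\delta\le\delta'<\omega^\omega$, and that by Remark~\ref{remark_manipulating_formulas} computable infinitary formulas may be manipulated effectively under finitary Boolean connectives, numeral substitution, and the inclusion of computable $\Pi_\alpha$-formulas into computable $\Sigma_{\alpha+1}$- and $\Pi_{\alpha+1}$-formulas.

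For the upper bound I would prove by effective transfinite recursion that there are a computable function $f$ and a computable $\gamma\colon\PN\to\PN$ with $\gamma(\pi(\alpha))$ a notation for an ordinal $<\omega^\omega$ such that for all $\alpha<\omega^\omega$, all $i,e$, and all assignments $\bar n$,
\begin{align*}
\Nat\vDash\varphi^{\Sigma_\alpha}_{e,i}(\bar n)&\iff f(\langle\langle\Sigma,\pi(\alpha),i,e\rangle,\bar n\rangle)\in H(\gamma(\pi(\alpha))),\\
\Nat\vDash\varphi^{\Pi_\alpha}_{e,i}(\bar n)&\iff f(\langle\langle\Pi,\pi(\alpha),i,e\rangle,\bar n\rangle)\notin H(\gamma(\pi(\alpha))).
\end{align*}
For $\alpha=0$ the left-hand sides are decidable (satisfaction of a quantifier-free arithmetical formula), so they m-reduce to $H(\gamma(\pi(0)))$ and to its complement for any nontrivial choice of $\gamma(\pi(0))$. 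For the step, unfold the definition: $\Nat\vDash\varphi^{\Sigma_\alpha}_{e,i}(\bar n)$ holds iff there exist $\langle\pi(\beta),k,e'\rangle\in W_e$ and $\bar m\in\Nat^k$ with $\Nat\vDash\varphi^{\Pi_\beta}_{e',i+k}(\bar n,\bar m)$; since $\beta<\alpha$ gives $\pi(\beta)\prec\pi(\alpha)$, the recursion hypothesis (available through the values of $f$ on notations $\prec\pi(\alpha)$) turns the inner satisfaction into the $H(\gamma(\pi(\beta)))$-decidable predicate ``$f(\dots)\notin H(\gamma(\pi(\beta)))$'', and prefixing it with the c.e.\ condition ``$\langle\pi(\beta),k,e'\rangle\in W_e$'' and the numerical quantifiers over $\bar m$ makes the whole statement $H(\gamma(\pi(\beta)))$-c.e., hence $\le_1 H(\gamma(\pi(\beta))+1)$. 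Choosing $\gamma(\pi(\alpha))$ large enough to dominate $\gamma(\pi(\beta))+2$ for all $\beta<\alpha$ (e.g.\ $\gamma(\pi(\alpha))=\pi(\omega\cdot(\alpha+1))$) and using $H(\delta)\le_1 H(\delta')$, everything reduces uniformly to $H(\gamma(\pi(\alpha)))$; the $\Pi_\alpha$ clause is dual, with ``co-c.e.''\ in place of ``c.e.''. Post-composing with $H(\gamma(\pi(\alpha)))\le_1 H(\omega^\omega)$ and $\overline{H(\gamma(\pi(\alpha)))}\le_1 H(\omega^\omega)$ then gives $\SATOO\le_m H(\omega^\omega)$.

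For the lower bound I would build, again by effective transfinite recursion, a computable function $s$ such that $s(\pi(\alpha))$ is an index of a computable infinitary formula $\theta_{\pi(\alpha)}(x_1)$ of rank $<\omega^\omega$ with $n\in H(\alpha)\iff\Nat\vDash\theta_{\pi(\alpha)}(n)$ for all $n$. For $\alpha=0$ take $\theta_{\pi(0)}$ to be a false $\Sigma_0$-formula, since $H(0)=\emptyset$. For $\alpha=\beta+1$ use $H(\alpha)=H(\beta)'=\{e\mid\Phi^{H(\beta)}_e(e)\downarrow\}$: the predicate ``$\Phi^{H(\beta)}_e(e)\downarrow$'' is equivalent, via the computable predicate $\HALT$, to ``there is a finite string $\sigma$ with $\HALT(e,e,\sigma)$ such that $\Nat\vDash\theta_{\pi(\beta)}(j)$ for every position $j$ with $\sigma(j)=1$ and $\Nat\vDash\neg\theta_{\pi(\beta)}(j)$ for every position $j$ with $\sigma(j)=0$'', i.e.\ an infinite disjunction over $\sigma$ of finite Boolean combinations of $\theta_{\pi(\beta)}$ and its negation, which by Remark~\ref{remark_manipulating_formulas} is effectively a computable infinitary formula $\theta_{\pi(\alpha)}$ whose rank exceeds that of $\theta_{\pi(\beta)}$ by a bounded amount. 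For limit $\alpha$ use $H(\alpha)=\{\langle\pi(\beta),n\rangle\mid\beta<\alpha,\ n\in H(\beta)\}$ and take $\theta_{\pi(\alpha)}(x_1)$ to be the infinite disjunction over $\beta<\alpha$ of ``$\exists y\,(x_1=\langle\pi(\beta),y\rangle\wedge\theta_{\pi(\beta)}(y))$''. The reduction $H(\omega^\omega)\le_m\SATOO$ then sends $u$ to $\langle s(\pi(\alpha)),\langle n\rangle\rangle$ if $u=\langle\pi(\alpha),n\rangle$ with $\pi(\alpha)\in\PN$ (a decidable condition on $u$), and to a fixed non-element of $\SATOO$ otherwise; correctness is immediate since $\langle\pi(\alpha),n\rangle\in H(\omega^\omega)\iff n\in H(\alpha)\iff\Nat\vDash\theta_{\pi(\alpha)}(n)$.

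The main obstacle is the ordinal bookkeeping inside the two recursions: one must fix a concrete computable, strictly increasing assignment of ordinals $<\omega^\omega$ to the stages, verify that each Turing jump or numerical quantifier costs only finitely many levels and is absorbed, and confirm that the formula produced at the successor step of the lower bound genuinely belongs to the $\Sigma$/$\Pi$ class claimed for it (so that Remark~\ref{remark_manipulating_formulas} legitimately applies and the ranks never reach $\omega^\omega$). The base cases, the effective-transfinite-recursion scaffolding, and the treatment of malformed inputs are routine.
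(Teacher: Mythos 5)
Your proposal is correct and follows essentially the same route as the paper: the direction $H(\omega^\omega)\le_m\SATOO$ is literally the paper's construction of the formulas $\theta^{[\alpha]}$ by effective transfinite recursion (false formula at $0$, the $\HALT$-disjunction at successors, the tagged disjunction at limits), and the direction $\SATOO\le_m H(\omega^\omega)$ is the standard Ash--Knight/Montalb\'an argument that the paper cites without detail, which you instantiate with an explicit level function $\gamma$. The only point to watch in your upper bound is that the disjunction over all $\beta<\alpha$ must be made c.e.\ in a \emph{single} oracle, which works because your $\gamma(\pi(\alpha))=\pi(\omega\cdot(\alpha+1))$ denotes a limit ordinal whose $H$-level uniformly decides membership and non-membership in all the lower levels $H(\gamma(\pi(\beta)))$ before the final jump is taken; this is exactly the ``absorption'' bookkeeping you flag, and it goes through.
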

	
	The proof of this proposition is essentially presented in \cite[Lemma V.15, Theorem V.16]{Montalban23}, \cite[Theorem 7.5]{AshK00}. Namely, to show that $H(\omega^\omega)$ is m-reducible to $\SATOO$, effective transfinite recursion is used \cite[I.4.1]{Montalban23} to define the function $f:\PN \to \Nat$ such that $f(\pi(\alpha)) \in S^\Sigma_{\alpha}$ is the index of a $\Sigma_{\alpha}$-formula $\theta^{[\alpha]}(x_1)$ such that $\theta^{[\alpha]}(n)$ is true if and only if $n \in H(\alpha)$. If $\alpha = 0$, then $f(\pi(\alpha))$ is an index of the formula $\theta^{[0]}(x_1) = (x_1+1 = 0)$. If $\alpha = \beta+1$, then we assume that $f(\pi(\beta))$ is defined and we can use it to define $f(\pi(\beta+1))$. Let $\theta^{[\beta]}$ be the $\Sigma_{\beta}$-formula with the index $f(\pi(\beta))$. Then $\theta^{[\alpha]}$ can be effectively defined as follows:
	$$
		\theta^{[\alpha]}(x_1) \eqdef \bigdoublevee\limits_{n \in \Nat} \bigdoublevee\limits_{\substack{\sigma \in 2^{< \Nat} \\ \Halt(n,n,\sigma)}} \left(x_1 = {n}\right) \wedge \bigwedge\limits_{\substack{i < \vert \sigma \vert \\ \sigma(i) = 1}} \theta^{[\beta]}(i) \wedge \bigwedge\limits_{\substack{i < \vert \sigma \vert \\ \sigma(i) = 0}} \neg \theta^{[\beta]}(i)
	$$
	If $\gamma$ is a limit ordinal, then $\theta^{[\gamma]}(x,y) \eqdef \bigdoublevee\limits_{\beta < \gamma} \left(x = {\pi(\beta)}\right) \wedge \theta^{[\beta]}(y)$. 
	
	Conversely, to show that $\SATOO$ is m-reducible to $H(\omega^\omega)$ it suffices to define the function $g:\PN \to \Nat$ such that $\inp =  \langle \langle X,\pi(\alpha),i,e\rangle , \langle n_1,\dotsc,n_i \rangle \rangle \in \SATOO$ if and only if $\Phi_{g(\pi(\alpha))}(\inp) \in H(\omega^\omega)$. Again, this is done by effective transfinite recursion (see details in \cite[Theorem V.16]{Montalban23}).
	
	\begin{corollary}
		$\SATOO$ is $\Delta^0_{\omega^\omega}$-complete under Turing reductions.
	\end{corollary}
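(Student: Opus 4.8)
The plan is to read the corollary off directly from Proposition \ref{proposition_SAT_H}, combined with the definitional observation made just before that proposition: since $\Delta^0_{\omega^\omega}$ was defined as the class of sets computable relative to $H(\omega^\omega)$, the set $H(\omega^\omega)$ is itself $\Delta^0_{\omega^\omega}$-complete under Turing reductions. So the only thing to do is transfer this completeness along the equivalence $\SATOO \equiv_m H(\omega^\omega)$ provided by Proposition \ref{proposition_SAT_H}.

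First I would invoke Proposition \ref{proposition_SAT_H} to obtain $\SATOO \equiv_m H(\omega^\omega)$, and then weaken this to Turing-equivalence using the standard fact that any m-reduction is in particular a Turing reduction; hence $\SATOO \equiv_T H(\omega^\omega)$. For membership: from $\SATOO \le_T H(\omega^\omega)$ and the definition of $\Delta^0_{\omega^\omega}$ as the sets computable relative to $H(\omega^\omega)$, we get $\SATOO \in \Delta^0_{\omega^\omega}$. For hardness: given any $A \in \Delta^0_{\omega^\omega}$, the definition gives $A \le_T H(\omega^\omega)$, and the other half of the equivalence gives $H(\omega^\omega) \le_T \SATOO$; by transitivity of Turing reducibility, $A \le_T \SATOO$. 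Together these two facts say precisely that $\SATOO$ is $\Delta^0_{\omega^\omega}$-complete under Turing reductions.

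There is essentially no obstacle here: all the substantive work is packed into Proposition \ref{proposition_SAT_H} (and, via the cited arguments of Montalb\'an and Ash--Knight, into effective transfinite recursion), which we are entitled to assume. The only points meriting a line of care are keeping the directions of the two reductions straight and recording that the m-equivalence supplied by the proposition is in fact stronger than the Turing-equivalence actually needed for the statement.
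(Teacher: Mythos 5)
Your argument is correct and matches the paper's (implicit) reasoning exactly: the corollary is read off from Proposition \ref{proposition_SAT_H} together with the observation, made when the jump hierarchy is defined, that $H(\omega^\omega)$ is $\Delta^0_{\omega^\omega}$-complete under Turing reductions, and m-equivalence implies Turing-equivalence. Nothing further is needed.
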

	
	In what follows, showing that $\SATOO$ and the derivability problem in $\ACTMult$ are recursively isomorphic would suffice to establish $\Delta^0_{\omega^\omega}$-completeness of $\ACTMult$.

	\subsection{Computable Functions as String Rewriting Systems}\label{ssec_SR}
	
	In the last preliminary subsection, we aim to describe computable functions in terms of string rewriting systems (also known as semi-Thue systems) instead of Turing machines, which is convenient for further encoding. Recall that a string rewriting system $\mathit{SR}$ over an alphabet $\mathcal{A}$ is a finite set of rules of the form $\alpha \to \beta$ where $\alpha$, $\beta$ are arbitrary strings with symbols from $\mathcal{A}$. We write $u\alpha v \Rightarrow_{\mathit{SR}} u \beta v$ if $(\alpha \to \beta) \in \mathit{SR}$.
	
	\begin{definition}
		A Turing machine is a tuple $TM = (Q,\Tape, \Input,\Output, \blank, \Trans, q_0, q_a)$ where  $Q$ is a finite set of \emph{states}; $\Tape$ is the alphabet of tape symbols; $\Input, \Output \subseteq \Tape$ are alphabets of input and output symbols resp.; $\blank \in \Tape \setminus (\Input \cup \Output)$ is the distinguished \emph{blank} symbol; $\Trans \subseteq \left(Q \setminus \{q_a\} \right) \times \Tape \times Q \times \Tape \times \{R,L,N\}$ is the transition relation; $q_0 \in Q$ is the initial state; $q_a \in Q$ is the accepting state.
	\end{definition}
	
	In our setting, a tape of a Turing machine is infinite in both sides. A configuration of $\TM$ can be described by a string $uaqv$ where $u,v \in \Tape^\ast$, $a \in \Tape$ and $q \in Q$. Let us agree on that the head position in this configuration is $a$. An initial configuration is of the form $uq_0$ where $u \in \Input^\ast$. A final configuration is of the form $q_a w$ such that $w \in \Output^\ast$. If $uq_0$ is transformed into $q_a w$, then we say that $\TM$ transforms $u$ into $w$.
	
	\begin{definition}
		Given a Turing machine $\TM = (Q,\Tape, \Input, \Output, \blank, \Trans, q_0, q_a)$ and the symbols $\final,a_L,a_R$ not from $\Tape \cup Q$, let $\SR(\TM;a_L,a_R,\final)$ be the string rewriting system consisting of the following rules \cite{BookO93}:
		
		\begin{multicols}{2}
			\begin{itemize}
				\item $a_R \to q_0 a_R^\prime$;
				\item $a q \to b r, \; (q,a,r,b,N) \in \Trans$;
				\item $a_{L} q \to a_L b r, \; (q,\blank,r,b,N) \in \Trans$;
				\item $a q \to r b, \; (q,a,r,b,L) \in \Trans$;
				\item $a_{L} q \to a_L r b, \; (q,\blank,r,b,L) \in \Trans$;
				\item $a q c \to b c r$, $(q,a,r,b,R) \in \Trans$, $c \in \Tape$;
				\item $a q a_R^\prime \to b \blank r a_R^\prime,  (q,a,r,b,R) \in \Trans$;
				\item $a_L q a_R^\prime \to a_L b \blank r a_R^\prime,  (q,\blank,r,b,R) \in \Trans$;
				\item $\blank q_a \to q_a$; \quad $\blank a_R^\prime \to a_R^\prime$;
				\item $a_L q_a \to a_L a_L^\prime$;
				\item $a_L^\prime c \to c a_L^\prime , \; c \in \Output$; \quad $a_L^\prime a_R^\prime \to \final$.
			\end{itemize}
		\end{multicols}
	\end{definition}
	
	\begin{proposition}[cf. {\cite[Lemma 2.5.2]{BookO93}}]\label{prop_TM_SR}
		$a_L u a_R \Rightarrow_{\SR(\TM;a_L,a_R,\final)}^\ast v \final$ if and only if $v = a_L w$ and $\TM$ transforms $u$ into $w$.
	\end{proposition}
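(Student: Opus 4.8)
To prove Proposition~\ref{prop_TM_SR}, the plan is to reproduce the argument of \cite[Lemma~2.5.2]{BookO93}, adapting its bookkeeping to our conventions: a two-sided infinite tape, the state symbol written immediately to the right of the scanned cell, and the endmarkers $a_L$ on the left and $a_R$ (later replaced by $a_R^\prime$) on the right, which absorb the infinitely many blank cells flanking the used portion of the tape. To a configuration $C$ of $\TM$ one associates the string $\lceil C\rceil$ obtained by writing out the non-blank window of the tape with the state symbol inserted at the right place, and calls a string of the form $a_L\,\lceil C\rceil\,a_R^\prime$ a \emph{working string}. Every successful derivation will then be seen to split into three phases: the initialization step $a_R\to q_0 a_R^\prime$; a simulation phase consisting entirely of working strings; and a cleanup phase, entered through $a_L q_a\to a_L a_L^\prime$ and closed by $a_L^\prime a_R^\prime\to\final$.

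First I would record three structural invariants of all strings reachable from $a_L u a_R$, each proved by a one-line induction on derivation length from an inspection of the rules: (i) $a_L$ occurs exactly once and is leftmost, since no rule writes to its left; (ii) the symbol $a_R$ occurs only in $a_L u a_R$ itself, and after the unique application of $a_R\to q_0 a_R^\prime$ the symbol $a_R^\prime$ occurs exactly once and is rightmost until it is consumed by $a_L^\prime a_R^\prime\to\final$; (iii) every reachable string contains exactly one \emph{active} symbol --- an element of $Q$, or $a_R$, or $a_L^\prime$ --- and the applicable rules are exactly those triggered by it. For the ``if'' direction, assume $\TM$ transforms $u$ into $w$, witnessed by a run $C_0,\dots,C_m$ with $C_0=uq_0$, $C_m=q_a w$, and each $C_{i+1}$ obtained from $C_i$ in one step of $\TM$. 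Then $a_L u a_R\Rightarrow a_L u q_0 a_R^\prime=a_L\lceil C_0\rceil a_R^\prime$, and I would show by induction on $i$ that every step $C_i\to C_{i+1}$ is realized by a single rewriting step $a_L\lceil C_i\rceil a_R^\prime\Rightarrow a_L\lceil C_{i+1}\rceil a_R^\prime$: an interior transition rule ($aq\to br$, $aq\to rb$, $aqc\to bcr$) when the scanned cell and the target cell already lie inside the window, and the matching boundary rule --- one of those mentioning $a_L$ or $a_R^\prime$ --- when the head would step onto a flanking blank, which that rule materializes. Finally, as $w\in\Output^\ast$, the working string $a_L\lceil C_m\rceil a_R^\prime$ has the form $a_L\blank^{\,j}q_a w\blank^{\,k}a_R^\prime$; applying $\blank q_a\to q_a$ $j$ times, then $a_L q_a\to a_L a_L^\prime$, then $a_L^\prime c\to c a_L^\prime$ once per letter of $w$, then $\blank a_R^\prime\to a_R^\prime$ $k$ times, and finally $a_L^\prime a_R^\prime\to\final$, produces $a_L w\final$.

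For the converse, suppose $a_L u a_R\Rightarrow^\ast v\final$. By invariant~(ii), the only rule producing $\final$ is $a_L^\prime a_R^\prime\to\final$ and it fires at most once, so the derivation ends with it; hence the penultimate string is $a_L\gamma a_L^\prime a_R^\prime$ and $v=a_L\gamma$. To finish, I would trace the derivation forward using invariant~(iii): the only rule applicable to $a_L u a_R$ is $a_R\to q_0 a_R^\prime$, which yields the working string $a_L\lceil C_0\rceil a_R^\prime$ with $C_0=uq_0$; to a working string whose state is $q\ne q_a$ the applicable rules are precisely the interior and boundary transition rules for the tuples $(q,\cdot,\cdot,\cdot,\cdot)\in\Trans$, each producing a working string $a_L\lceil C^\prime\rceil a_R^\prime$ with $C^\prime$ a one-step $\TM$-successor of $C$; to a working string in state $q_a$ there are additionally available $\blank q_a\to q_a$ (staying among working strings) and $a_L q_a\to a_L a_L^\prime$ (entering cleanup); and once $a_L^\prime$ is the active symbol only $a_L^\prime c\to c a_L^\prime$ (for $c\in\Output$), $\blank a_R^\prime\to a_R^\prime$ and $a_L^\prime a_R^\prime\to\final$ apply, and these can only slide $a_L^\prime$ rightward past output letters, delete blanks abutting $a_L^\prime$ or $a_R^\prime$, and terminate. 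Hence every derivation reaching a string containing $\final$ passes through a working string in state $q_a$, i.e.\ $\TM$ reaches an accepting configuration $q_a w$ from $uq_0$ and so transforms $u$ into $w$, and the cleanup steps force $v=a_L w$. I expect the main obstacle to be exactly the boundary bookkeeping: in the ``if'' direction, checking that every combination of move direction and head position is covered by a rule that extends the window correctly, and in the ``only if'' direction, verifying that a cleanup rule can never be applied prematurely, so that the simulation cannot ``cheat''. Both points are precisely the technical content of \cite[Lemma~2.5.2]{BookO93} and its proof.
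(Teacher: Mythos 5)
Your proposal is correct and matches the paper's approach: the paper proves this proposition in one sentence, by ``straightforwardly inspecting the rules'' of $\SR(\TM;a_L,a_R,\final)$ (deferring to the cited Lemma~2.5.2 of Book and Otto), and your write-up is exactly that inspection carried out in detail. The only imprecision is in your invariant~(iii): the rule $\blank a_R^\prime \to a_R^\prime$ can also fire during the simulation phase whenever the window has a trailing blank, but this is harmless since it preserves the represented configuration, so the argument goes through.
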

	
	The proof is by straightforwardly inspecting the rules of $\SR(\TM;a_L,a_R,\final)$. We shall use its following corollary.
	
	\begin{corollary}[Church-Turing thesis for string rewriting systems]\label{corollary_Church-Turing}
		Given a computable partial function $f:\Input^\ast \to \Output^\ast$ and the symbols $a_L,a_R,\final \notin \Input \cup \Output$, one can effectively find a string rewriting system $\SR(f;a_L,a_R,\final)$ such that $a_L u a_R \Rightarrow_{\SR(f;a_L,a_R,\final)}^\ast v \final$ if and only if $v = a_L f(u)$.
	\end{corollary}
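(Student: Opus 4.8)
The plan is to deduce the corollary from Proposition \ref{prop_TM_SR} together with the ordinary Church--Turing thesis for Turing machines. First I would invoke the classical fact that the partial computable function $f : \Input^\ast \to \Output^\ast$ is computed by some Turing machine; the only point needing care is to arrange this machine to be of exactly the shape that the definition of ``$\TM$ transforms $u$ into $w$'' demands --- that is, it starts in a configuration $u q_0$ with $u \in \Input^\ast$ and, whenever it halts, does so in a configuration $q_a w$ with $w \in \Output^\ast$ (so the head sits at the left end and the tape content is precisely the output word). Given any Turing machine for $f$ in the usual sense, one post-processes it: once the main computation terminates, the machine erases all auxiliary symbols, shifts the output word to the left margin, positions the head on its leftmost cell, and only then enters $q_a$. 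This clean-up does not alter the input--output behaviour and can be performed effectively from an index for $f$. We may moreover rename the states of $Q$ and the non-input/output symbols of $\Tape$ so that $a_L, a_R, \final \notin \Tape \cup Q$, as Proposition \ref{prop_TM_SR} requires.

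Having fixed such a machine $\TM$, I would simply put $\SR(f;a_L,a_R,\final) \eqdef \SR(\TM;a_L,a_R,\final)$. Since $\TM$ is obtained effectively from (an index for) $f$ and the passage $\TM \mapsto \SR(\TM;a_L,a_R,\final)$ is itself effective, the whole assignment $f \mapsto \SR(f;a_L,a_R,\final)$ is effective, as claimed.

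It then remains to chain the two equivalences. By Proposition \ref{prop_TM_SR}, $a_L u a_R \Rightarrow_{\SR(\TM;a_L,a_R,\final)}^\ast v \final$ holds if and only if $v = a_L w$ for some $w$ such that $\TM$ transforms $u$ into $w$. Because $\TM$ computes $f$, this in turn holds if and only if $f(u)$ is defined and $w = f(u)$; in particular, if $f(u)$ is undefined then $\TM$ never reaches an accepting configuration on input $u$, so no rewriting sequence of the stated form exists, in agreement with the statement. Combining the two equivalences yields $a_L u a_R \Rightarrow^\ast v \final$ if and only if $v = a_L f(u)$.

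The single genuine obstacle is the normal-form massaging in the first step: making rigorous that every partial computable function is computed by a Turing machine of the restricted shape presupposed by Proposition \ref{prop_TM_SR}. This is entirely routine --- a standard ``output-cleanup'' simulation --- and introduces no new ideas, so I would only sketch it rather than carry out the construction in full.
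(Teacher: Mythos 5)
Your proposal is correct and follows exactly the route the paper intends (the paper leaves this proof implicit): take a Turing machine computing $f$, normalized so that it starts in a configuration $u q_0$ and halts in a configuration $q_a w$ with the symbols $a_L,a_R,\final$ kept disjoint from $\Tape \cup Q$, set $\SR(f;a_L,a_R,\final) \eqdef \SR(\TM;a_L,a_R,\final)$, and chain the equivalence of Proposition \ref{prop_TM_SR} with the fact that $\TM$ computes $f$, the divergent case being handled because no accepting configuration is ever reached. Your attention to the normal-form clean-up and to effectiveness of the passage $f \mapsto \SR(f;a_L,a_R,\final)$ is exactly the routine content the paper elides.
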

	
	\begin{definition}
		We say that a string rewriting system $(a_L,a_R,\final)$-implements a computable function $f:\Input^\ast \to \Output^\ast$ if and only if it equals $\SR(f;a_L,a_R,\final)$.
	\end{definition}
	
	\section{Main Results}\label{sec_main}
	
	The main result of this work is Theorem \ref{th_main_complexity} presented in Section \ref{sec_introduction}. It follows from the following theorem.
	
	\begin{theorem}\label{th_main_1-equivalence}
		$\SATOO$ is recursively isomorphic to the derivability problem for $\ACTMult$.
	\end{theorem}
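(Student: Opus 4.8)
Write $\Der(\ACTMult)$ for the set (of codes) of sequents derivable in $\ACTMult$. The plan is to establish many-one reductions in both directions between $\Der(\ACTMult)$ and $\SATOO$, to arrange both of them to be $1$-reductions, and then to invoke Myhill's isomorphism theorem.

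\emph{The direction $\Der(\ACTMult) \le_m \SATOO$.} I expect this to be routine. Passing to the generalized rules of Remark \ref{remark_rank_increases} (so that the permutation rules $(\nabla P_i)$ are absorbed), every premise of a rule application has rank strictly below its conclusion, while $\rho(S) < \omega^\omega$ for every sequent $S$ and $\rho$ is computable on polynomial notations. Hence derivability obeys the well-founded recursion ``$S$ is derivable iff for some rule instance with conclusion $S$ all of its premises are derivable'', in which $(\ast L_\omega)$ contributes an $\omega$-indexed conjunction, $(\bang L_n)_{n<\omega}$ an $\omega$-indexed disjunction, and all remaining choices are finite. Reading this recursion off by effective transfinite recursion on $\rho(S)$ — exactly in the style of the construction of $\theta^{[\alpha]}$ recalled in Section \ref{ssec_hyperarithmetic}, using closure of computable infinitary formulas under infinitary Boolean operations (Remark \ref{remark_manipulating_formulas}) — yields, uniformly in $S$, an index of a computable infinitary formula $\psi_S$ of rank bounded by a computable function of $\rho(S)$, hence of rank $<\omega^\omega$, with $\Nat \vDash \psi_S$ iff $S \in \Der(\ACTMult)$. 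The map $S \mapsto \langle (\text{index of } \psi_S), \langle\rangle \rangle$ is then the desired m-reduction.

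\emph{The direction $\SATOO \le_m \Der(\ACTMult)$.} This is the core of the argument and is where I expect essentially all the difficulty to lie; it occupies Sections \ref{ssec_construction}--\ref{ssec_BTA}. Given an index $\langle X,\pi(\alpha),i,e\rangle$ and an assignment $(n_1,\dots,n_i)$, I would build, uniformly and computably, a sequent $S$ that lies in $\Der(\ACTMult)$ iff $\Nat \vDash \varphi^{X_\alpha}_{e,i}(n_1,\dots,n_i)$, by writing a ``low-level program'' in $\ACTMult$ that evaluates the formula. In this program the multiplexing subexponential $\bang$ produces the copies needed to instantiate a block $\exists x_{j+1}\cdots\exists x_{j+k}$ (resp.\ $\forall x_{j+1}\cdots\forall x_{j+k}$); the Kleene star ${}^\ast$, via the $(\ast L_\omega)$ rule, drives the infinitary $\bigdoublevee$/$\bigdoublewedge$ ranging over $W_e$, enumerating disjuncts/conjuncts and recursing to smaller $\beta<\alpha$; the string-rewriting machinery of Section \ref{ssec_SR} (Corollary \ref{corollary_Church-Turing}) lets $\ACTMult$ simulate the computable predicates involved — membership in $W_e$, the numbering of quantifier-free formulas, the function $\mathrm{sub}$ — and evaluate the $\Sigma_0$-base case in the standard model; and $\nabla$ supplies the permutations needed to shuttle data around the antecedent. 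Correctness — that $S$ is derivable exactly when the formula is true — would be proved by proof analysis: one passes to the basic-derivation normal form of Lemma \ref{lemma_basic_derivation} and runs a ``bounded-tape'' argument (Section \ref{ssec_BTA}) showing that every basic derivation faithfully tracks the intended computation, the $\omega$-branching of $(\ast L_\omega)$ being forced to meet the universal content and the multiplexing rules the existential content, so that no ``cheating'' derivation exists. Ruling out such unfaithful derivations is the main obstacle.

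\emph{From m-equivalence to recursive isomorphism.} Finally I would upgrade to $1$-reductions and apply Myhill's isomorphism theorem (see, e.g., \cite{Rogers67}). First, $\SATOO$ is a cylinder: by Remark \ref{remark_manipulating_formulas} one can pad any index, uniformly and injectively, with infinitely many logically equivalent variants (e.g.\ by conjoining successively more copies of a fixed quantifier-free tautology), which witnesses $\SATOO \times \Nat \le_1 \SATOO$; hence the m-reduction $\Der(\ACTMult) \le_m \SATOO$ of the first step automatically upgrades to $\Der(\ACTMult) \le_1 \SATOO$. Second, the reduction $\SATOO \le_m \Der(\ACTMult)$ of the second step is already essentially injective — the output sequent $S$ is built from, and recoverably encodes, its input $\langle \langle X,\pi(\alpha),i,e\rangle , \langle n_1,\dots,n_i\rangle \rangle$ — so a minor adjustment makes it a $1$-reduction $\SATOO \le_1 \Der(\ACTMult)$. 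Myhill's theorem then yields a recursive isomorphism between $\SATOO$ and $\Der(\ACTMult)$, which is Theorem \ref{th_main_1-equivalence}; combined with $\Delta^0_{\omega^\omega}$-completeness of $\SATOO$ under Turing reductions it also gives Theorem \ref{th_main_complexity}.
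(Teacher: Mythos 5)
Your overall skeleton coincides with the paper's: two one-one reductions plus Myhill's isomorphism theorem, and your treatment of the direction from derivability to $\SATOO$ (generalized rules so that rank strictly decreases, then effective transfinite recursion on the rank to produce a computable infinitary formula expressing ``some rule instance with this conclusion has all premises derivable'', with $(\ast L_\omega)$ giving an infinite conjunction and $(\bang L_n)$ an infinite disjunction) is essentially Proposition \ref{prop_Der} and Corollary \ref{corollary_ACTMult_to_SATOO}. The injectivity/Myhill step at the end is also in line with the paper, which simply builds both reductions injective from the start.

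However, for the direction $\SATOO \le_1 \Der(\ACTMult)$ your text is a statement of intent rather than a proof, and this is where essentially all of the theorem's content lives; you yourself flag ``ruling out unfaithful derivations'' as an unresolved obstacle rather than resolving it. Concretely, two ideas are missing. First, the actual encoding: the paper maps $\inp=\langle\langle X,\pi(\alpha),i,e\rangle,\langle n_1,\dotsc,n_i\rangle\rangle$ to a single finite sequent $a_L, a_1^{\inp}, a_X, \energy, \Energy_{h_M},\dotsc,\Energy_{h_1} \vdash a_L\cdot\okay$ in which the transfinite recursion along $\beta<\alpha$ is paid for by an explicit ``ordinal budget'' $\omega^{h_1}+\dotsc+\omega^{h_M}=\alpha+1$ carried by a tower of formulas $\Energy_k$ (with $\bang\Energy_k$ nested inside $\Energy_{k+1}$ to generate arbitrarily many copies one level down), while one evaluation step is delegated to string rewriting via $\Rule_{SR}$ and $\Technical(SR,f)$; the quantifier witnesses and the index of the chosen subformula are encoded in the number of copies of a single primitive formula $a_2$, produced by $\bang$ on the $\Sigma$ side and demanded for all $c$ by $a_2^\ast$ with $(\ast L_\omega)$ on the $\Pi$ side (your sketch at one point has Kleene star driving both $\bigdoublevee$ and $\bigdoublewedge$, which cannot work, since $(\ast L_\omega)$ forces all premises). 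Your plan gives no mechanism for how a fixed finite sequent can recurse to smaller ranks, which is exactly what the $\Energy_{h_M},\dotsc,\Energy_{h_1}$ stack and the induction on $\omega^{h_1}+\dotsc+\omega^{h_M}$ (not on $\alpha$) achieve in Lemma \ref{lemma_main}. Second, the correctness machinery: the paper does not argue directly about arbitrary derivations but first proves a normal-form result (basic derivations, Lemma \ref{lemma_basic_derivation}) and then the bottom-top analysis (Lemma \ref{lemma_bottom-top}), whose point is that with all side formulas kept ``locked'' (of the form $p\bs B$ or $(p\bs B)\wedge(q\bs C)$ with the trigger $p$ absent) only the intended formula can be principal, so equiderivability can be traced step by step; this is precisely the argument that excludes cheating derivations, and nothing in your proposal supplies it or a substitute for it.
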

	
	As a consequence, by Proposition \ref{proposition_SAT_H}, we can derive the following. 
	\begin{corollary}\label{corollary_main_m-reducibility}
		The derivability problem for $\ACTMult$ is m-equivalent to $H(\omega^\omega)$.
	\end{corollary}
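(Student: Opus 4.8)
The plan is simply to compose the two results already in hand. A recursive isomorphism is, in particular, a pair of m-reductions: the witnessing computable bijection reduces one set to the other, and its inverse — also a total computable function — reduces in the opposite direction. Hence Theorem \ref{th_main_1-equivalence} already gives that the derivability problem for $\ACTMult$ is m-equivalent to $\SATOO$. By Proposition \ref{proposition_SAT_H}, $\SATOO$ is m-equivalent to $H(\omega^\omega)$. Since m-reducibility is transitive, so is m-equivalence, and therefore the derivability problem for $\ACTMult$ is m-equivalent to $H(\omega^\omega)$, as claimed.

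If one wishes to see the reductions concretely, they can be assembled from the ingredients of the proof sketch of Proposition \ref{proposition_SAT_H}: the m-reduction $H(\omega^\omega) \le_m \SATOO$ sends a code $\langle \pi(\beta), n\rangle$ to the pair consisting of the index $f(\pi(\beta))$ of the $\Sigma_\beta$-formula $\theta^{[\beta]}$ and the assignment $\langle n\rangle$ (and sends codes not of this shape to a fixed non-element); composing it with the computable bijection supplied by Theorem \ref{th_main_1-equivalence} yields a many-one reduction of $H(\omega^\omega)$ to the set of $\ACTMult$-derivable sequents. Composing the inverse bijection with the reduction $\SATOO \le_m H(\omega^\omega)$ from the same proof (the map $\mathrm{inp} \mapsto \Phi_{g(\pi(\alpha))}(\mathrm{inp})$) gives the converse reduction.

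There is no genuine obstacle here: all the content lies in Theorem \ref{th_main_1-equivalence} and Proposition \ref{proposition_SAT_H}. The only point worth stressing is that, unlike the $\Delta^0_{\omega^\omega}$-completeness of Theorem \ref{th_main_complexity} — which must be phrased with Turing reductions, since under Kleene's $\mathcal{O}$ there is no canonical $\Delta^0_{\omega^\omega}$-complete set for m-reducibility — the polynomial notation system $\PN$ singles out one concrete set $H(\omega^\omega)$, so the stronger assertion of m-equivalence with that specific set is meaningful, and it is exactly what the chain of reductions above delivers.
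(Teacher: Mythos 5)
Your proposal is correct and follows essentially the same route as the paper: the corollary is obtained directly from Theorem \ref{th_main_1-equivalence} (the recursive isomorphism yields m-equivalence with $\SATOO$) combined with Proposition \ref{proposition_SAT_H} and transitivity of m-equivalence. The extra explicit description of the reductions is harmless elaboration but not needed.
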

	
	\subsection{Construction}\label{ssec_construction}
	
	Let us prove Theorem \ref{th_main_1-equivalence}. We start with constructing a one-one reduction of $\SATOO$ to the derivability problem for $\ACTMult$. Given $\inp = \langle \langle X,\pi(\alpha),i,e \rangle ,\langle n_1,\dotsc,n_i \rangle \rangle $ as an input, we shall define the sequent $a_L, a_1^{\inp}, a_X, \energy, \Energy_{h_M}, \dotsc, \Energy_{h_1} \vdash a_L \cdot \okay$ which is derivable if and only if $\inp \in \SATOO$. Here $\Energy_k$ are formulas and $h_1,\dotsc,h_M \in \Nat$ are non-negative integers such that $\omega^{h_1} + \dotsc + \omega^{h_M} = \alpha+1$; $a_L,a_1,a_\Sigma,a_\Pi,\energy,\okay \in \PrFm$ are primitive formulas. Recall that $a^n$ is $a,\dotsc,a$ repeated $n$ times. The formula $\Energy_0$ will be designed to simulate the algorithm that does one step of checking whether a given computable infinitary formula is true under a given assignment of variables. For example, if the formula is a disjunction of formulas, $\Energy_0$ ``chooses'' one of the disjuncts and forms a ``subroutine'' asking whether this disjunct is true under the assignment of formulas. The formula $\Energy_n$ will be defined using nested subexponentials, and it will contain $\Energy_0$ inside. Roughly speaking, $\Energy_n$'s role is to make sufficiently many copies of $\Energy_0$. 
	
	The construction presented below is quite large, and it is hard to explain the meaning of each its part right after its definition. Probably, it would be helpful for the reader to firstly take a look at the definitions below without inspecting them too carefully and then proceed to Sections \ref{ssec_BTA} and \ref{ssec_proof}. While reading the proofs presented in those sections, the reader would return to the below definitions and grasp their details.
	
	The following expressions will be frequently used in the main construction.
	
	\begin{definition}
		\leavevmode
		\begin{multicols}{3}
			\begin{enumerate}
				\item $[ B ]^? A \eqdef B \bs (B \cdot A)$;
				\item $[ B ]^\rightarrow A \eqdef B \bs (A \cdot B)$;
				\item $\OKAY \eqdef \okay \bs \okay$.
			\end{enumerate}
		\end{multicols}
	\end{definition}
	We start with encoding string rewriting systems using formulas of $\ACTMult$. We assume that the symbols of the alphabet used by a string rewriting system are primitive formulas. Additionally, we need new primitive formulas $\wait,\go,\fail,\final$.
	
	\begin{definition}\label{def_fm}
		Let $r = (c_1\dotsc c_m \to b_1 \dotsc b_n)$ be a rule of a string rewriting system (we assume that $m,n > 0$). Then $\fm(r) \eqdef c_m \bs \dotsc \bs c_1 \bs \left(b_1 \cdot \dotsc \cdot b_n \cdot \nabla \wait\right)$.
	\end{definition}
	The formula $\fm(r)$ is designed to model one application of the rule $r$ of a string rewriting sytem.
	
	\begin{example}
		Let $r = (ac \to bba)$. Then $\fm(r) = c \bs a \bs (b \cdot b \cdot a \cdot \nabla \wait)$. Note that, if the sequent $\Gamma, b, b, a, \nabla \wait, \Delta \vdash C$ is derivable, then so is $\Gamma, a,c, \Delta \vdash C$. The role of $\nabla \wait$ will become clear later; we need this formula to control the derivation better.
	\end{example}
	
	\begin{definition}\label{def_Rule}
		Let $SR$ be a string rewriting system. Then 
		$$
		\Rule_{SR} \eqdef \go \big\bs \bigwedge\limits_{r \in SR} (\nabla \fm(r) \cdot \wait \bs \go).
		$$
	\end{definition}
	
	Roughly speaking, the formula $\Rule_{SR}$ is the collection of all formulas $\fm(r)$ for $r \in SR$; i.e. one can choose any of the rules of $SR$ and to apply it. Again, the role of $\go$ and $\wait$ will become clear later.
	
	\begin{definition}\label{def_Technical}
		Given a string rewriting system $SR$ over $\mathcal{A}$ and a function $f:\{a_1,a_2\} \to \Fm$ where $\{a_1,a_2\} \subseteq \mathcal{A}$ (hereinafter, $a_1$ and $a_2$ are two fixed primitive formulas), let
		$$
		\Technical(SR,f) \eqdef 
		\go \bs \left( a_R \cdot
		\go \cdot \bang\Rule_{SR} \cdot \go \bs \final \bs \left( a_1 \bs f(a_1) \wedge a_2 \bs f(a_2) \right) \right).
		$$
	\end{definition}
	
	Inside the formula $\Technical(SR,f)$, there is the formula $\bang \Rule_{SR}$, which is able to make arbitrarily many copies of $\Rule_{SR}$. Informally, $\Technical(SR,f)$'s role is to simulate a derivation in the string rewriting system $SR$.
	
	\begin{definition}\label{def_Energy}
	Let us define the formulas $\Energy_{\Sigma}$, $\Energy_{\Pi}$, $\Energy_0$, and $\Energy_k$ for $k \in \Nat$, which play the main role in the construction. Below, $a_R, a_2$ are primitive formulas.
	
	\begin{enumerate}
		\item $\Energy_{\Sigma} \eqdef 
		\go \cdot \Technical\left(\sr_0,f_0\right) \cdot 
		\left(\OKAY \wedge [ \go ]^?\bang [ \go ]^\rightarrow a_2 \right) \cdot
		\left(\OKAY \wedge \Technical\left(\sr_1,f_\Sigma\right) \right) $.
		\begin{enumerate}
			\item $\sr_0$ is the string rewriting system that $(a_L,a_R,\final)$-implements the following computable function $\mathcal{F}_0$ (cf. Corollary \ref{corollary_Church-Turing}). Its input alphabet is $\{a_1\}$.
			If the input string is of the form $a_1^\inp$ where $\inp = \langle c,\langle n_1,\dotsc,n_i \rangle \rangle$, then the algorithm firstly checks whether $c \in S^X_0$. If $c = \langle X, \pi(0), i, e \rangle \in S^X_0$ for $X \in \{\Sigma,\Pi\}$, then the algorithm checks whether $\inp \in \SATOO$. This is decidable because $\varphi^{\Sigma_0}_{e,i} = \varphi^{\Pi_0}_{e,i}$ is a finitary quantifier-free formula. If $\inp \notin \SATOO$, then the function diverges (i.e. the algorithm halts in some non-accepting state). If $\inp \in \SATOO$, then the function returns $a_1$. If $c = \langle X, p, i, e \rangle$ for $p \succ \pi(0)$, then it returns the string $a_1^\inp a_2$. Otherwise, it diverges. Recall that $f_0(u)=v$ iff $a_L u a_R \Rightarrow^\ast_{\sr_0} a_L v \final$.
			
			\item $f_0:\{a_1,a_2\} \to \Fm$ is defined as follows: $f_0(a_1) = \okay$; $f_0(a_2) = \go$. 
			
			\item $\sr_1$ is the string rewriting system that $(a_L,a_R,\final)$-implements the following computable function $\mathcal{F}_1$ in the sense of Corollary \ref{corollary_Church-Turing}. If the input $w$ is of the form $w = a_1^{\inp_1} a_2^{\inp_2}$ such that 
			\begin{itemize}
				\item $\inp_1 = \langle \langle X, p, i, e \rangle, \langle n_{1}, \dotsc,n_{i}\rangle \rangle$ for $\pi(0) \prec p$ and 
				\item $\inp_2 = \langle c^\prime, y, \langle n_{i+1}, \dotsc,n_{i+j}\rangle \rangle$ where $c^\prime = \langle p^\prime,j,e^\prime \rangle$ for $p^\prime \prec p$ such that $\Halt(c^\prime,e,y)$ holds, 
			\end{itemize}
			then $\mathcal{F}_1(w) = a_1^{\langle \langle Y, p^\prime, i+j, e^\prime \rangle, \langle n_1,\dotsc,n_{i+j} \rangle \rangle} a_2$ where $Y = \Pi$ if $X = \Sigma$ and $Y = \Sigma$ if $X = \Pi$.	Checking these conditions is a computable problem. If the input is not of the form described above, then $\mathcal{F}_1(w) = a_1$.
			
			\item $f_\Sigma:\{a_1,a_2\} \to \Fm$ is defined as follows: $f_\Sigma(a_1) = \fail$; $f_\Sigma(a_2) = a_\Pi \cdot \energy$. 
		\end{enumerate}
		
		\item $\Energy_{\Pi} \eqdef
		\go \cdot \Technical(\sr_0,f_0) \cdot 
		\left(\OKAY \wedge [ \go ]^\rightarrow a_2^\ast\right) \cdot
		\left(\OKAY \wedge \Technical(\sr_1,f_\Pi)\right)$.
		\\
		Here 
		$f_\Pi:\{a_1,a_2\} \to \Fm$ is defined as follows: $f_\Pi(a_1) = \okay$; $f_\Pi(a_2) = a_\Sigma \cdot \energy$. 
		
		\item $\Energy \eqdef (a_\Sigma \bs \Energy_{\Sigma}) \wedge (a_\Pi \bs \Energy_{\Pi})$; \qquad $\Energy_0 \eqdef \OKAY \wedge \energy \bs \Energy$; \qquad $\Energy_{k+1} \eqdef \OKAY \wedge [ \energy ]^? \bang \Energy_k $.
	\end{enumerate}
	\end{definition}
	
	These formulas are used in the main sequent introduced in the following lemma.
	
	\begin{lemma}[main]\label{lemma_main}
		Let $M \ge 0$, $\omega > h_1 \ge \dotsc \ge h_M$, and $\inp \in \Nat$. Consider the sequent
		\begin{equation}\label{eq_seq_main}
			a_L, a_1^{\inp}, a_X, \energy, \Energy_{h_M}, \dotsc, \Energy_{h_1} \vdash a_L \cdot \okay.
		\end{equation}
		Let $\inp = \langle \langle X,\pi(\alpha),i,e \rangle,\langle n_1,\dotsc,n_i \rangle\rangle$.
		\begin{enumerate}
			\item If (\ref{eq_seq_main}) is derivable, then $\inp \in \SATOO$.
			\item If $\inp \in \SATOO$ and $\omega^{h_1}+\dotsc+\omega^{h_M} > \alpha$, then (\ref{eq_seq_main}) is derivable.
		\end{enumerate}
	\end{lemma}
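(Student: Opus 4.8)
The plan is to prove the two items by separate transfinite inductions: item~1 by induction on the rank $\rho$ of the sequent~(\ref{eq_seq_main}) — which is well-founded since, reading $\ACTMult$ via the generalized rules of Remark~\ref{remark_rank_increases}, every rule application strictly decreases the rank — and item~2 by induction on $\alpha$. For item~2 the idea is to build the derivation explicitly as the ``run'' of the intended program. In the base case $\alpha = 0$ the formula $\varphi^{X_0}_{e,i}$ is finitary quantifier-free, so $\mathcal{F}_0$ converges (returning $a_1$) exactly when $\inp \in \SATOO$; by Proposition~\ref{prop_TM_SR} the subsystem $\sr_0$ then rewrites $a_L a_1^{\inp} a_R$ to $a_L a_1 \final$, so $\Technical(\sr_0,f_0)$ together with $f_0(a_1) = \okay$ supplies the $\okay$ needed on the right, while the spare energy formulas are discarded through their $\OKAY = \okay\bs\okay$ conjuncts. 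In the inductive step I would use the truth of $\varphi^{X_\alpha}_{e,i}(n_1,\dots,n_i)$: if $X = \Sigma$, choose a true disjunct, i.e.\ $\langle\pi(\beta),k,e'\rangle\in W_e$ with $\beta<\alpha$ and witnesses $n_{i+1},\dots,n_{i+k}$ with $\Nat\vDash\varphi^{\Pi_\beta}_{e',i+k}(n_1,\dots,n_{i+k})$; form $a_2^{\inp_2}$ with $\inp_2$ encoding this choice together with a halting witness $y$, using the conjunct $[ \go ]^? \bang [ \go ]^\rightarrow a_2$, run $\sr_1$ through $\Technical(\sr_1,f_\Sigma)$, and — via $f_\Sigma(a_2)=a_\Pi\cdot\energy$ — arrive at a sequent of the form~(\ref{eq_seq_main}) with datum $\inp'=\langle\langle\Pi,\pi(\beta),i+k,e'\rangle,\langle n_1,\dots,n_{i+k}\rangle\rangle$, to which the induction hypothesis applies. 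If $X=\Pi$, the conjunct $[ \go ]^\rightarrow a_2^\ast$ lets me produce $a_2^\ast$ and apply the $\omega$-rule $(\ast L_\omega)$, branching over all values of $\inp_2$; in each branch either $\mathcal{F}_1$ yields a genuine subproblem, which is true since $\inp\in\SATOO$ so the induction hypothesis closes the branch, or $\mathcal{F}_1$ returns $a_1$ and the branch closes immediately via $f_\Pi(a_1)=\okay$.

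The only nonroutine point in item~2 is energy accounting: before each recursive call I must exhibit a sequent of the form~(\ref{eq_seq_main}) whose energy multiset has natural sum $>\beta$. I would isolate an ordinal-arithmetic observation: if $\omega^{h_1}+\dots+\omega^{h_M}>\alpha>\beta$, then by repeatedly ``unfolding'' a term $\omega^{h}$ with $h\ge 1$ into $n$ copies of $\omega^{h-1}$ for a freely chosen $n$ (and discarding terms when convenient), one reaches a multiset containing $\omega^{0}$ whose sum is still $>\beta$. In $\ACTMult$ each unfolding is realized by selecting the conjunct $[ \energy ]^? \bang \Energy_{h-1}$ of $\Energy_{h}$, retaining the ambient $\energy$, and applying $(\bang L_n)$; each discard is realized by an $\OKAY$ conjunct; and one program step then consumes a single $\Energy_0$ together with $\energy$ to activate $\Energy_X$, which later restores a fresh $\energy$ through $f_X$. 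The inequality $\beta+2\le\alpha+1\le\omega^{h_1}+\dots+\omega^{h_M}$ leaves exactly enough room for this.

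For item~1 I would fix a derivation of~(\ref{eq_seq_main}) and, by Lemma~\ref{lemma_basic_derivation}, assume it is basic, then trace it from the root, inverting reversible rules by Corollary~\ref{corollary_reversible_rules} when needed and using the basicness constraints of Definition~\ref{def_basic} to control how each composite formula built from primitives via $\bs,\cdot,\wedge$ can be made principal. The heart of the matter is to show that the scaffolding — the tokens $\go$, $\wait$, $\OKAY$, the $\nabla$-guards, and the distinguished primitive formulas — forces \emph{every} basic derivation of~(\ref{eq_seq_main}) to be a faithful run of the program: an energy formula is unpacked through its non-$\OKAY$ conjunct and, via multiplexing on some $\bang\Energy_k$, turned into copies of lower-level energy formulas; one $\Energy_0$ is activated with the available $\energy$ to produce $\Energy_X$; $\Technical(\sr_0,f_0)$ forces $\sr_0$ to rewrite $a_L a_1^{\inp}a_R$ to completion, so by Proposition~\ref{prop_TM_SR} $\mathcal{F}_0$ converges; if the datum lies in $S^X_0$ this already witnesses $\inp\in\SATOO$, and otherwise the derivation is forced through $\Technical(\sr_1,\cdot)$, whose successful run produces, for $X=\Sigma$, a single recursive sub-sequent of the form~(\ref{eq_seq_main}) at level $\beta<\alpha$ and of strictly smaller rank — so the induction hypothesis gives a true disjunct of $\varphi^{\Sigma_\alpha}_{e,i}$ — and, for $X=\Pi$, forces, through the $\omega$-rule on $a_2^\ast$, derivations of \emph{all} the corresponding sub-sequents, so the induction hypothesis yields every conjunct of $\varphi^{\Pi_\alpha}_{e,i}$. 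Either way $\Nat\vDash\varphi^{X_\alpha}_{e,i}(n_1,\dots,n_i)$, i.e.\ $\inp\in\SATOO$.

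I expect the main obstacle to be item~1. Item~2 is essentially writing the program down and checking it step by step, whereas item~1 must certify that $\ACTMult$ admits no \emph{unintended} derivation of~(\ref{eq_seq_main}). This is the bounded proof analysis that classifies the possible shapes of basic derivations of sequents built from $\Technical$-, $\Rule$- and $\Energy$-formulas and controls the interplay of $(\bang L_n)$, $(\ast L_\omega)$, the generalized permutation rules, and the left rules for $\bs$ and $\wedge$; I would develop those shape lemmas first and then feed them into the transfinite induction above.
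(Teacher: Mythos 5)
Your proposal is correct and follows essentially the same route as the paper: encode the computation, normalize to basic derivations, and prove via shape/``bottom-top'' lemmas that every basic derivation of (\ref{eq_seq_main}) is a faithful run of the program, with the energy formulas unfolded along fundamental sequences to cover $\alpha$. The only organizational difference is that the paper runs a \emph{single} transfinite induction on the energy ordinal $\omega^{h_1}+\dotsc+\omega^{h_M}$ and obtains both directions simultaneously from the equiderivability (``iff'') form of the bottom-top analysis, whereas your two separate inductions (on sequent rank for soundness, on $\alpha$ for completeness) would force you to traverse the same case analysis twice.
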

	
	To prove it we firstly develop a method of analysing proofs in $\ACTMult$; this is done in the next subsection.
	
	\subsection{Bottom-Top Analysis}\label{ssec_BTA}
	
	Our aim is to analyse derivability of (\ref{eq_seq_main}). This sequent has the following structure: there is a sequence of primitive formulas followed by a sequence of formulas of the form $\Energy_k$ in the antecedent; the succedent is $a_L \cdot \okay$. Below we develop a technique of analysing sequents of the form $\Gamma, A, \Psi \vdash a_L \cdot \okay$ where $\Gamma$ is a sequence of primitive formulas and $\Psi$ is a sequence of formulas of the form $p \bs B$ or $(p \bs B) \wedge (q \bs C)$ for $p,q \in \PrFm$. Namely, we claim that, if $A$ is not primitive, then one can assume without loss of generality that it is principal. The idea behind the proof is to consider a basic derivation of this sequent and to show that formulas in $\Psi$ cannot be principal as well as $a_L \cdot \okay$. Indeed, a formula of the form $p \bs A$ is principal in a basic derivation only if $p$ stands right to the left from it; similarly, $(p \bs B) \wedge (q \bs C)$ is principal only if $p$ or $q$ is to the left from it. 
	
	\begin{definition}
		A formula is \emph{locked} if it is of the form $p \bs A$ or $(p \bs B) \wedge (q \bs C)$ for some $p,q \in \PrFm$ and $A,B \in \Fm$. A sequence of formulas is \emph{locked} if each formula in it is locked.
	\end{definition}
	
	\begin{lemma}[bottom-top analysis]\label{lemma_bottom-top}
		\leavevmode
		
		\begin{enumerate}
			\item Consider a sequent of the form
			\begin{equation}\label{eq_seq_bottom-top}
				\Gamma, A, \Psi \vdash a_L \cdot \okay
			\end{equation}
			where $\Gamma \in \PrFm^\ast$, $A \in \Fm$ and $\Psi$ is a locked sequence of formulas. Then:
			\begin{enumerate}
				\item\label{item_bottom-top_bs} If $A = p \bs B$ for $p \in \PrFm$, then (\ref{eq_seq_bottom-top}) is derivable iff so is $\Gamma^\prime, B, \Psi \vdash a_L \cdot \okay$ for $\Gamma = \Gamma^\prime, p$; 
				\item\label{item_bottom-top_cdot} If $A = A_1 \cdot A_2$, then (\ref{eq_seq_bottom-top}) is derivable iff so is $\Gamma, A_1,A_2, \Psi \vdash a_L \cdot \okay$;
				\item\label{item_bottom-top_wedge} If $A = A_1 \wedge A_2$, then (\ref{eq_seq_bottom-top}) is derivable iff so is $\Gamma, A_i, \Psi \vdash a_L \cdot \okay$ for some $i\in \{1,2\}$;
				\item\label{item_bottom-top_bang} If $A = \bang B$, then (\ref{eq_seq_bottom-top}) is derivable iff so is $\Gamma, B^n, \Psi \vdash a_L \cdot \okay$ for some $n \in \Nat$;
				\item\label{item_bottom-top_ast} If $A = B^\ast$, then (\ref{eq_seq_bottom-top}) is derivable iff so is $\Gamma, B^n, \Psi \vdash a_L \cdot \okay$ for each $n \in \Nat$.
			\end{enumerate}
			\item Consider the sequent 
			\begin{equation}\label{eq_seq_bottom-top_2}
				\Theta_1, A, \Theta_2 \vdash a_L \cdot \okay
			\end{equation}
			where $\Theta_1,\Theta_2 = \Gamma,\Psi$
			such that $\Gamma \in \PrFm^\ast$ and $\Psi = b \bs E, \Psi^\prime$ is a locked sequence of formulas. Besides, let $b$ be not contained in $\Gamma$. 
			\begin{enumerate}
				\item\label{item_bottom-top_nabla} If $A = \nabla B$, then (\ref{eq_seq_bottom-top_2}) is derivable iff so is $\Xi_1,B,\Xi_2 \vdash a_L \cdot \okay$ for some $\Xi_1$ and $\Xi_2$ such that $\Xi_1,\Xi_2 = \Theta_1,\Theta_2$.
				\item\label{item_bottom-top_bs-2} If $A = p_m \bs \dotsc \bs p_1 \bs B$ for $\{p_1,\dotsc,p_m\} \subseteq \PrFm$, then (\ref{eq_seq_bottom-top_2}) is derivable iff so is $\Theta_1^\prime, B, \Theta_2 \vdash a_L \cdot \okay$ where $\Theta_1 = \Theta_1^\prime, p_1,\dotsc,p_m$. 
			\end{enumerate}
		\end{enumerate}
	\end{lemma}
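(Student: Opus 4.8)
The plan is to prove both parts by a uniform argument: fix a basic derivation $\DerTree$ of the given sequent (which exists by Lemma \ref{lemma_basic_derivation}), and trace downward-to-upward which formulas can serve as the principal formula of the last rule application. The ``only if'' directions are the substance; the ``if'' directions are immediate, since from a derivation of the claimed premise one simply applies the relevant left rule ($(\bs L)$ with the axiom $p \vdash p$ as right premise for \ref{item_bottom-top_bs}, $(\cdot L)$ for \ref{item_bottom-top_cdot}, $(\wedge L_i)$ for \ref{item_bottom-top_wedge}, $(\bang L_n)$ for \ref{item_bottom-top_bang}, $(\ast L_\omega)$ for \ref{item_bottom-top_ast}, $(\nabla L)$ plus permutations for \ref{item_bottom-top_nabla}, and an $m$-fold iteration of $(\bs L)$ for \ref{item_bottom-top_bs-2}). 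So I would dispatch those in one sentence each and concentrate on the forward direction.

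For the forward direction of part 1, the key structural claim is: in a basic derivation of a sequent $\Gamma, A, \Psi \vdash a_L \cdot \okay$ with $\Gamma \in \PrFm^\ast$, $\Psi$ locked, and $A$ non-primitive, neither $a_L \cdot \okay$ nor any formula in $\Psi$ can be the principal formula of the last rule. First, $a_L \cdot \okay$ cannot be principal: the only right rule producing a product is $(\cdot R)$, and by clause \ref{def_basic_cdot_R} of Definition \ref{def_basic} a basic derivation applies $(\cdot R)$ to a product of two primitives only to the axiom-like sequent $a_L, \okay \vdash a_L \cdot \okay$, whose antecedent is two primitive formulas — but our antecedent contains the non-primitive $A$, contradiction. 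Second, no formula $p \bs B$ or $(p \bs B) \wedge (q \bs C)$ in $\Psi$ can be principal: if $p \bs B$ were principal in $(\bs L)$, then by clause \ref{def_basic_bs_L} it must have a primitive formula $p$ immediately to its left; if $(p \bs B) \wedge (q \bs C)$ were principal in $(\wedge L_i)$, then by clause \ref{def_basic_wedge_L} the rule application must be immediately preceded (reading upward) by a $(\bs L)$ on the chosen conjunct — in either case one eventually needs a $(\bs L)$ whose principal formula has an adjacent primitive on its left. I would argue that, within a basic derivation in which the succedent never changes (all rules below $A$'s introduction are left rules, since the succedent $a_L \cdot \okay$ can only be introduced by $(\cdot R)$ which we have excluded), the multiset of formulas to the left of $\Psi$'s formulas consists of $\Gamma$ together with pieces broken off $A$; the decomposition of the single non-primitive formula $A$ together with the finitely many primitives in $\Gamma$ cannot supply the adjacency pattern required to make a $\Psi$-formula principal, because each $\Psi$-formula's required left-neighbour primitive would have to be produced, but the only source of fresh primitives on the left is decomposing $A$, and by a careful bookkeeping on occurrences that resource is exhausted elsewhere. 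Hence the last rule must be principal on $A$, and inspecting each possible outermost connective of $A$ ($\bs$, $\cdot$, $\wedge$, $\bang$, $^\ast$) yields exactly the stated reduced sequent(s), using Corollary \ref{corollary_reversible_rules} for the reversibility needed in cases \ref{item_bottom-top_cdot} and \ref{item_bottom-top_ast}.

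For part 2, the extra hypothesis — $\Psi = b \bs E, \Psi'$ with $b$ not occurring in $\Gamma$ — is what lets us analyse a formula $A = \nabla B$ or $A = p_m \bs \dots \bs p_1 \bs B$ whose position in the antecedent is only known up to the split $\Theta_1, \Theta_2 = \Gamma, \Psi$ (so we cannot immediately name $A$'s left-neighbour). The idea is the same: rule out $a_L \cdot \okay$ and the $\Psi$-formulas as principal, but now the ``$b$ does not occur in $\Gamma$'' condition is essential to show $b \bs E$ cannot become principal via the basic-form $(\bs L)$, since there is no spare $b$ anywhere in the sequent to be its left neighbour, and the only way to create one would be to decompose $A$, which contains no $b$. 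Once every other formula is excluded as principal, the last rule is principal on $A$; for $A = \nabla B$ we use $(\nabla L)$ (and the generalized-rule reformulation of Remark \ref{remark_rank_increases} absorbs the permutation bookkeeping, explaining why the conclusion only fixes the reduced antecedent up to a permutation $\Xi_1, \Xi_2 = \Theta_1, \Theta_2$); for $A = p_m \bs \dots \bs p_1 \bs B$ we peel off the leading $(\bs L)$ applications one at a time, each in basic form consuming the primitive $p_j$ sitting just to its left, which forces $\Theta_1 = \Theta_1', p_1, \dots, p_m$.

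The main obstacle I anticipate is the occurrence-counting argument that forbids a $\Psi$-formula from being principal — i.e.\ proving rigorously that the primitive formulas available immediately to the left of a locked formula in a basic derivation are ``accounted for'' and cannot be rearranged, via $(\nabla P_i)$ or by the shape of the decomposition of $A$, to trigger clause \ref{def_basic_bs_L} or \ref{def_basic_wedge_L}. I would handle this by an induction on $\rho$ (legitimate by Remark \ref{remark_rank_increases} once we pass to generalized rules), maintaining as an invariant that throughout the relevant part of the derivation the antecedent has the shape ``primitives, then at most one non-primitive descendant of $A$, then a locked block'' — the only delicate point being that $(\nabla P_i)$ can move a $\nabla$-headed locked formula past primitives, which is exactly why part 2 is stated with the explicit split and the freshness condition on $b$.
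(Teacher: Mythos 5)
Your overall strategy is the paper's: fix a basic derivation (Lemma \ref{lemma_basic_derivation}), dispatch the ``if'' directions by applying the corresponding left rules, handle cases \ref{item_bottom-top_cdot} and \ref{item_bottom-top_ast} by reversibility (Corollary \ref{corollary_reversible_rules}), and for the rest rule out the succedent and the locked formulas as principal in the last rule application so that $A$ must be principal. The succedent case is argued exactly as in the paper. But at the one step that actually carries the lemma --- why no formula of $\Psi$ (and, in part 2, not $b \bs E$) can be principal --- you go astray. You propose a global argument tracking the whole derivation (``pieces broken off $A$'', ``the only source of fresh primitives on the left is decomposing $A$'', ``that resource is exhausted elsewhere''), flag it yourself as the main obstacle, and only sketch an invariant-maintaining induction on $\rho$ without carrying it out. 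As written, this is a genuine gap: the resource-exhaustion claim is never proved, and the invariant you would need is not formulated precisely enough to check.

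The missing observation is that no such bookkeeping is needed, because the adjacency requirements of Definition \ref{def_basic} constrain the \emph{conclusion} of the rule application in which the formula is principal, and for the last rule application that conclusion is precisely the given sequent. In $\Gamma, A, \Psi \vdash a_L \cdot \okay$ every formula of $\Psi$ is immediately preceded either by $A$ (non-primitive in the relevant cases) or by another locked formula (non-primitive by definition), so requirement \ref{def_basic_bs_L} (together with \ref{def_basic_wedge_L}, which pushes the conjunctive case one step up to a $(\bs L)$ whose conclusion has the same left neighbour) immediately forbids any $\Psi$-formula from being principal; nothing about higher parts of the derivation, decompositions of $A$, or permutations enters. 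In part 2 the only new possibility is that $b \bs E$ is preceded by the rightmost primitive of $\Gamma$, and $b \notin \Gamma$ kills it --- again a purely local check on the given sequent, not a claim that no $b$ can ever be ``created''. Once $A$ is forced to be principal in the last rule, the premise of that rule application is literally the reduced sequent claimed in each case (for \ref{item_bottom-top_nabla} one iterates through the trailing $(\nabla P_i)$ applications, whose principal formula is again $\nabla B$, up to the $(\nabla L)$; for \ref{item_bottom-top_bs-2} one inducts on $m$ as you indicate). With the local argument substituted for your occurrence-counting plan, your proof matches the paper's.
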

	
	\begin{proof}
		The ``if'' directions of all iffs hold because, for each statement, the sequent of interest, i.e. (\ref{eq_seq_bottom-top}) or (\ref{eq_seq_bottom-top_2}), can be obtained from the sequent considered in the statement by applying rules of $\ACTMult$. E.g. in \ref{item_bottom-top_nabla}, $\Theta_1, \nabla B, \Theta_2 \vdash a_L \cdot \okay$ is obtained from $\Xi_1,B,\Xi_2 \vdash a_L \cdot \okay$ by the application of $(\nabla L)$ followed by $(\nabla P_i)$ that moves $\nabla B$ to the position between $\Theta_1$ and $\Theta_2$.
		
		Let us prove the ``only if'' directions. Statements \ref{item_bottom-top_cdot} and \ref{item_bottom-top_ast} follow from Corollary \ref{corollary_reversible_rules}. Let us deal with the remaining ones. Assume that (\ref{eq_seq_bottom-top}) is derivable and fix some its basic derivation. Let us consider the last rule application in it and the corresponding principal formula occurrence. It cannot be the succedent $a_L \cdot \okay$ because, by requirement \ref{def_basic_cdot_R} of Definition \ref{def_basic}, this would imply that $\Gamma,A,\Psi = a_L,\okay$ while $A$ is not primitive. It cannot be a formula from $\Psi$ as well. Indeed, assume that a formula $r \bs C$ from $\Psi$ is principal. The requirement \ref{def_basic_bs_L} of Definition \ref{def_basic} implies that the formula $r$ stands right to the left from $r \bs C$. However, each formula in $\Psi$ is preceded by a formula which is not primitive. Similarly, if $(r \bs C) \wedge (s \bs D)$ from $\Psi$ is principal, then the requirements \ref{def_basic_bs_L} and \ref{def_basic_wedge_L} of Definition \ref{def_basic} imply that either $r$ or $s$ is right to the left from the principal formula, which is again not the case. Thus we come up with a contradiction.
		
		In what follows, $A$ must be principal in the basic derivation. Then the ``only if'' parts of statements \ref{item_bottom-top_wedge} and \ref{item_bottom-top_bang} follow from the definitions of rules $(\wedge L_i)$ and $(\bang L_n)$ resp. In the case $A = p \bs B$, if $A$ is principal, then, by the requirement \ref{def_basic_bs_L} of Definition \ref{def_basic}, the last rule application in the basic derivation must be of the form
		$$
		\infer[(\bs L)]
		{
			\Gamma^\prime, p, p \bs B, \Psi \vdash a_L \cdot \okay
		}
		{
			\Gamma^\prime, B, \Psi \vdash a_L \cdot \okay
			&
			p \vdash p
		}
		$$
		
		This proves statement \ref{item_bottom-top_bs}. Now, let us prove statement \ref{item_bottom-top_nabla}. We clam that, if $\Delta_1, \nabla B, \Delta_2 \vdash a_L \cdot \okay$ is derivable where $\Delta_1, \Delta_2 = \Gamma,\Psi$, then, in any basic derivation of this sequent, $\nabla B$ is principal in the last rule application. The only difference in the proof of this claim with the cases considered above is that now the leftmost formula in $\Psi$, which is $b \bs E$, can be preceded by a primitive formula $a$, which is the rightmost formula of $\Gamma$. However, since $b$ is not contained in $\Gamma$, $b \ne a$ and hence $b \bs E$ cannot be principal. The formulas in $\Psi$ cannot be principal due to the same reasons as in the previous cases and so cannot be $a_L \cdot \okay$. Hence $\nabla B$ is principal.
		
		In what follows, if we consider the basic derivation of $\Theta_1, \nabla B, \Theta_2 \vdash a_L \cdot \okay$ from bottom to top, then there are several applications of the rules $(\nabla P_1)$ and $(\nabla P_2)$ preceded by an application of $(\nabla L)$. The sequent above these rule applications is of the form $\Xi_1, B, \Xi_2 \vdash a_L \cdot \okay$ such that $\Xi_1, \Xi_2 = \Theta_1,\Theta_2$; therefore it is derivable, as desired.
		
		Statement \ref{item_bottom-top_bs-2} is proved by induction on $m$. We can show in the same way as in the previous case that only $A$ can be principal. Then, by the requirement \ref{def_basic_bs_L} of Definition \ref{def_basic}, the last rule application in the basic derivation must be of the form
		$$
		\infer[(\bs L)]
		{
			\Theta_1^{\prime\prime}, p_m, p_m \bs B^\prime, \Theta_2 \vdash a_L \cdot \okay
		}
		{
			\Theta_1^{\prime\prime}, B^\prime, \Theta_2 \vdash a_L \cdot \okay
			&
			p_m \vdash p_m
		}
		$$
		
		Here $\Theta_1^{\prime\prime}, p_m = \Theta_1$ and $B^\prime = p_{m-1} \bs \dotsc \bs p_1 \bs B$. Note that the induction hypothesis can be applied to $\Theta_1^{\prime\prime}, B^\prime, \Theta_2 \vdash a_L \cdot \okay$ because $\Theta_1^{\prime\prime},\Theta_2$ is of the form $\Gamma^{\prime},\Psi$ where $\Gamma^\prime$ is obtained from $\Gamma$ by removing one occurrence of $p_m$. The induction hypothesis implies that $\Theta_1^{\prime\prime} = \Theta_1^\prime, p_1,\dotsc,p_{m-1}$ and $\Theta_1^{\prime}, B, \Theta_2 \vdash a_L \cdot \okay$ is derivable, as desired.
	\end{proof}
	
	This gives us a handy criterion of derivability of a sequent of the form (\ref{eq_seq_bottom-top}) or (\ref{eq_seq_bottom-top_2}).
	
	\begin{example}\label{example_BTA}
		As an example of using Lemma \ref{lemma_bottom-top} let us prove that the sequent 
		\begin{equation}\label{eq_seq_example_BTA}
			a_L, a_1^{\inp}, a_\Pi, \energy, \Energy_{0}, \Energy_1, \Energy_2 \vdash a_L \cdot \okay
		\end{equation}
		is derivable if and only if $a_L, a_1^{\inp}, \Energy_{\Pi}, \Energy_1, \Energy_2 \vdash a_L \cdot \okay$ is derivable. Recall that $\Energy_0 = \OKAY \wedge \energy \bs \left((a_\Sigma \bs \Energy_{\Sigma}) \wedge (a_\Pi \bs \Energy_{\Pi})\right)$, $\Energy_1 = (\okay \bs \okay) \wedge \energy \bs (\energy \cdot \bang \Energy_0)$, $\Energy_2 = (\okay \bs \okay) \wedge \energy \bs (\energy \cdot \bang \Energy_1)$. 
		\begin{enumerate}
			\item Let us apply the statement \ref{item_bottom-top_bs} of Lemma \ref{lemma_bottom-top} to (\ref{eq_seq_example_BTA}) taking $\Gamma = a_L, a_1^{\inp}, a_\Pi, \energy$, $A = \Energy_0$ and $\Psi = \Energy_1, \Energy_2$ (note that $\Psi$ is locked). Consequently, (\ref{eq_seq_example_BTA}) is derivable if and only if either $a_L, a_1^{\inp}, a_\Pi, \energy, \okay \bs \okay, \Energy_1, \Energy_2 \vdash a_L \cdot \okay$ is derivable or so is $a_L, a_1^{\inp}, a_\Pi, \energy, \energy \bs \Energy, \Energy_1, \Energy_2 \vdash a_L \cdot \okay$. However, the former sequent is not derivable according to Lemma \ref{lemma_bottom-top} (\ref{item_bottom-top_bs}) because its derivability would imply that $\okay$ stands right to the left from $\okay \bs \okay$.
			\item According to Lemma \ref{lemma_bottom-top} (\ref{item_bottom-top_bs}), the sequent $a_L, a_1^{\inp}, a_\Pi, \energy, \energy \bs \Energy, \Energy_1, \Energy_2 \vdash a_L \cdot \okay$ is derivable if and only if so is $a_L, a_1^{\inp}, a_\Pi, \Energy, \Energy_1, \Energy_2 \vdash a_L \cdot \okay$.
			\item Repeating the same argument for $\Energy$ we obtain that $a_L, a_1^{\inp}, a_\Pi, \Energy, \Energy_1, \Energy_2 \vdash a_L \cdot \okay$ is equiderivable with $a_L, a_1^{\inp}, \Energy_{\Pi}, \Energy_1, \Energy_2 \vdash a_L \cdot \okay$, which concludes the proof.
		\end{enumerate}
		Instead of writing such an unnecessary long explanation, we would like to present these reasonings in the following concise form:
		
		\begin{center}
			\begin{tabular}{ll}
				&\\[-11pt]
				$a_L, a_1^{\inp}, a_\Pi, \energy, \principal{\OKAY \wedge (\energy \bs \Energy)},\Energy_1, \Energy_2 \vdash a_L \cdot \okay$
				&
				\ref{item_bottom-top_wedge} $(\wedge)$
				\\[2pt]
				&\\[-11pt]
				$a_L, a_1^{\inp}, a_\Pi, \energy, \principal{\energy \bs \Energy} ,\Energy_1, \Energy_2 \vdash a_L \cdot \okay$
				&
				\ref{item_bottom-top_bs} $(\bs)$
				\\[2pt]
				&\\[-11pt]
				$a_L, a_1^{\inp}, a_\Pi, \principal{(a_\Sigma \bs \Energy_{\Sigma}) \wedge (a_\Pi \bs \Energy_{\Pi})} ,\Energy_1, \Energy_2 \vdash a_L \cdot \okay$
				&
				\ref{item_bottom-top_wedge} $(\wedge)$
				\\[2pt]
				&\\[-11pt]
				$a_L, a_1^{\inp}, a_\Pi, \principal{a_\Pi \bs \Energy_{\Pi}} ,\Energy_1, \Energy_2 \vdash a_L \cdot \okay$
				&
				\ref{item_bottom-top_bs} $(\bs)$
				\\[2pt]
				&\\[-11pt]
				$a_L, a_1^{\inp}, \Energy_{\Pi} ,\Energy_1, \Energy_2 \vdash a_L \cdot \okay$
				&
				\\[2pt]
			\end{tabular}
		\end{center}
		
		The intended meaning of these lines is the following: ``for each $k$, the sequent in the line $k$ is derivable if and only if so is the sequent in the line $(k+1)$, which is proved by the corresponding statement of Lemma \ref{lemma_bottom-top}''. The frame around a formula indicates that this formula plays the role of $A$ in the application of Lemma \ref{lemma_bottom-top}.
	\end{example}
	
	Using the bottom-top analysis (i.e. Lemma \ref{lemma_bottom-top}) we prove a number of useful facts.
	
	\begin{corollary}\label{corollary_wedge}
		Let $\Gamma \in \PrFm^\ast$ be a sequence of primitive formulas and let $\Psi$ be a locked sequence of formulas. Then, the sequent $\Gamma, \bigwedge\limits_{i=1}^n A_i, \Psi \vdash a_L \cdot \okay$ is derivable if and only if $\Gamma,A_i, \Psi \vdash a_L \cdot \okay$ is derivable for some $i \in \{1,\dotsc,n\}$.
	\end{corollary}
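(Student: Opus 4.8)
The plan is to induct on $n$, with Lemma~\ref{lemma_bottom-top}(\ref{item_bottom-top_wedge}) supplying the entire argument. The base case $n=1$ is vacuous, since $\bigwedge_{i=1}^{1} A_i$ is literally $A_1$, so both sides of the claimed equivalence say the same thing.

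For the inductive step I would write $\bigwedge_{i=1}^{n} A_i$ as $\left(\bigwedge_{i=1}^{n-1} A_i\right) \wedge A_n$, following the bracketing convention for iterated conjunctions (the opposite association is handled symmetrically, peeling off $A_1$ instead of $A_n$). Since $\Gamma \in \PrFm^\ast$, both $\bigwedge_{i=1}^{n-1} A_i$ and $A_n$ lie in $\Fm$, and $\Psi$ is locked, so Lemma~\ref{lemma_bottom-top}(\ref{item_bottom-top_wedge}) applies with $A = \left(\bigwedge_{i=1}^{n-1} A_i\right) \wedge A_n$. It tells us that $\Gamma, \bigwedge_{i=1}^{n} A_i, \Psi \vdash a_L \cdot \okay$ is derivable if and only if at least one of $\Gamma, \bigwedge_{i=1}^{n-1} A_i, \Psi \vdash a_L \cdot \okay$ and $\Gamma, A_n, \Psi \vdash a_L \cdot \okay$ is derivable. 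By the induction hypothesis the former is derivable exactly when $\Gamma, A_i, \Psi \vdash a_L \cdot \okay$ is derivable for some $i \le n-1$; adjoining the $A_n$ alternative yields the claim for $n$.

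I do not expect any genuine obstacle here: Lemma~\ref{lemma_bottom-top} already does the hard proof-theoretic work of showing that a binary conjunction sitting in this position must be principal in some basic derivation, and $n$-ary conjunction is just iterated binary conjunction. The only thing to keep in mind is that the side contexts retain the shape required by Lemma~\ref{lemma_bottom-top} at every step of the induction — $\Gamma$ remains a sequence of primitive formulas and $\Psi$ remains locked, since neither is altered by the reduction — so the lemma stays applicable throughout.
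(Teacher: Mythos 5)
Your proposal is correct and matches the paper's proof, which is stated simply as a straightforward induction on $n$ using statement \ref{item_bottom-top_wedge} of Lemma \ref{lemma_bottom-top} exactly as you do. The observation that $\Gamma$ stays primitive and $\Psi$ stays locked throughout the induction is the only point needing care, and you handle it.
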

	
	The proof is by straightforward induction on $n$. 
	
	The following lemma shows us how $\Rule_{SR}$ models an application of a rule of the string rewriting system $SR$ (recall Definition \ref{def_Rule} where $\Rule_{SR}$ is introduced).
	
	\begin{lemma}\label{lemma_Rule}
		Consider the following sequent:
		\begin{equation}\label{eq_seq_before_Rule}
			\mathcal{U}, \go, \Rule_{SR}^l,\Psi \vdash a_L \cdot \okay
		\end{equation}
		Here $SR$ is a string rewriting system over the alphabet $\mathcal{A}$ and $\mathcal{U} \in \mathcal{A}^\ast$ is a sequence of symbols considered as primitive formulas; besides, $\Psi$ is a locked sequence of formulas. Then (\ref{eq_seq_before_Rule}) is derivable if and only if a sequent of the form
		\begin{equation}\label{eq_seq_after_Rule}
			\mathcal{W}, \go, \Psi \vdash a_L \cdot \okay
		\end{equation}
		is derivable for some $\mathcal{W} \in \mathcal{A}^\ast$ such that $\mathcal{U} \Rightarrow_{SR}^l \mathcal{W}$.
	\end{lemma}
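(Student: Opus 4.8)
The plan is to prove Lemma~\ref{lemma_Rule} by induction on $l$, analysing one copy of $\Rule_{SR}$ at a time. For $l=0$ there is nothing to do: (\ref{eq_seq_before_Rule}) is literally of the form (\ref{eq_seq_after_Rule}) with $\mathcal{W}=\mathcal{U}$ and $\mathcal{U}\Rightarrow^{0}_{SR}\mathcal{U}$. So fix $l\ge 1$; I will show that (\ref{eq_seq_before_Rule}) is derivable if and only if $\mathcal{U}',\go,\Rule_{SR}^{l-1},\Psi\vdash a_L\cdot\okay$ is derivable for some $\mathcal{U}'$ with $\mathcal{U}\Rightarrow_{SR}\mathcal{U}'$. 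Granting this, the forward implication of the lemma follows by applying the induction hypothesis to the shorter sequent, and the converse follows by running the equivalences backwards after splitting a given $l$-step rewriting $\mathcal{U}\Rightarrow^{l}_{SR}\mathcal{W}$ as $\mathcal{U}\Rightarrow_{SR}\mathcal{U}'\Rightarrow^{l-1}_{SR}\mathcal{W}$.

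To peel off the leftmost copy of $\Rule_{SR}$ I would use the bottom-top analysis. The antecedent of (\ref{eq_seq_before_Rule}) is the primitive word $\mathcal{U},\go$ followed by the locked sequence $\Rule_{SR}^{l},\Psi$ (note that $\Rule_{SR}=\go\bs(\cdots)$ and $\wait\bs\go$ are locked because $\go,\wait\in\PrFm$). Hence Lemma~\ref{lemma_bottom-top}~(\ref{item_bottom-top_bs}) reduces the question to derivability of $\mathcal{U},\bigwedge_{r\in SR}\bigl(\nabla\fm(r)\cdot(\wait\bs\go)\bigr),\Rule_{SR}^{l-1},\Psi\vdash a_L\cdot\okay$; Corollary~\ref{corollary_wedge} further reduces it to choosing some rule $r_0=(c_1\dotsc c_m\to b_1\dotsc b_n)\in SR$ and asking for derivability of $\mathcal{U},\nabla\fm(r_0)\cdot(\wait\bs\go),\Rule_{SR}^{l-1},\Psi\vdash a_L\cdot\okay$; and reversibility of $(\cdot L)$ (Corollary~\ref{corollary_reversible_rules}) rewrites this as $\mathcal{U},\nabla\fm(r_0),\wait\bs\go,\Rule_{SR}^{l-1},\Psi\vdash a_L\cdot\okay$. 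Now $\nabla\fm(r_0)$ sits just after the primitive word $\mathcal{U}$ and just before the locked sequence $\wait\bs\go,\Rule_{SR}^{l-1},\Psi$ whose first formula has head $\wait\notin\mathcal{A}$, so Lemma~\ref{lemma_bottom-top}~(\ref{item_bottom-top_nabla}) lets me relocate $\fm(r_0)$ to any position inside $\mathcal{U}$. Since $\fm(r_0)=c_m\bs\dotsc\bs c_1\bs(b_1\cdot\dotsc\cdot b_n\cdot\nabla\wait)$ by Definition~\ref{def_fm}, Lemma~\ref{lemma_bottom-top}~(\ref{item_bottom-top_bs-2}) forces that position to be immediately after an occurrence of the word $c_1\dotsc c_m$, i.e. $\mathcal{U}=u\,c_1\dotsc c_m\,v$, and leaves $u,b_1\cdot\dotsc\cdot b_n\cdot\nabla\wait,v,\wait\bs\go,\Rule_{SR}^{l-1},\Psi\vdash a_L\cdot\okay$. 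One more application of reversibility of $(\cdot L)$ and of Lemma~\ref{lemma_bottom-top}~(\ref{item_bottom-top_nabla}) to $\nabla\wait$, together with the observation that the lone primitive $\wait$ can only be absorbed by the formula $\wait\bs\go$ (and, in a basic derivation, only when it stands immediately to its left), yields $u,b_1,\dotsc,b_n,v,\wait,\wait\bs\go,\Rule_{SR}^{l-1},\Psi\vdash a_L\cdot\okay$, and a final use of Lemma~\ref{lemma_bottom-top}~(\ref{item_bottom-top_bs}) turns $\wait,\wait\bs\go$ back into $\go$. The outcome is exactly $\mathcal{U}',\go,\Rule_{SR}^{l-1},\Psi\vdash a_L\cdot\okay$ with $\mathcal{U}'=u\,b_1\dotsc b_n\,v$, that is $\mathcal{U}\Rightarrow_{SR}\mathcal{U}'$, as claimed.

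Every step in that chain is an ``iff'', so the converse direction is simply the chain read from top to bottom: given $\mathcal{U}\Rightarrow_{SR}\mathcal{U}'$ witnessed by $r_0$ applied at position $(u;v)$ and a derivation of $\mathcal{U}',\go,\Rule_{SR}^{l-1},\Psi\vdash a_L\cdot\okay$ (which the induction hypothesis supplies after splitting $\mathcal{U}'\Rightarrow^{l-1}_{SR}\mathcal{W}$), I append at the root the matching applications of $(\bs L)$, $(\wedge L)$, $(\cdot L)$, $(\nabla L)$ and the permutation rules $(\nabla P_i)$, this time freely choosing to slide $\fm(r_0)$ so as to sit right after $c_1\dotsc c_m$ and to slide $\nabla\wait$ so as to sit right before $\wait\bs\go$.

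The step I expect to be the real work is justifying the two ``forcing'' claims used in the forward direction, since Lemma~\ref{lemma_bottom-top}~(\ref{item_bottom-top_nabla}) only guarantees that \emph{some} placement of the released $\nabla$-formula makes the sequent derivable. For $\fm(r_0)$ the matter is already settled by Lemma~\ref{lemma_bottom-top}~(\ref{item_bottom-top_bs-2}), which has the requirement on the heads $c_m,\dotsc,c_1$ built in. For $\wait$ one has to go back to the structure of basic derivations (Definition~\ref{def_basic}): a primitive formula occurring in the antecedent that is not one of the letters $a_L,\okay$ of the succedent must be consumed by a $(\bs L)$ in which it is the formula immediately to the left of the principal one, and among the formulas present here ($\mathcal{A}$-letters, copies of $\Rule_{SR}=\go\bs(\cdots)$, the formulas of $\Psi$, and the single $\wait\bs\go$) only $\wait\bs\go$ carries $\wait$ as the head of a $\bs$ at the surface, while it cannot be reached by any $\mathcal{A}$-letter of $v$ because those letters have no formula to their right able to consume them. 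This rigidity — which is exactly what the auxiliary symbols $\go$, $\wait$ and the $\nabla\wait$ planted inside $\fm(r)$ exist to enforce — is what makes each copy of $\Rule_{SR}$ correspond to precisely one rewriting step; I would isolate it as a small observation and reuse it rather than re-derive it at each occurrence.
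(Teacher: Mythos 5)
Your proposal is correct and follows essentially the same route as the paper's proof: induction on $l$, peeling one copy of $\Rule_{SR}$ via Lemma~\ref{lemma_bottom-top}~(\ref{item_bottom-top_bs}), Corollary~\ref{corollary_wedge}, reversibility of $(\cdot L)$, the $\nabla$-relocation statement combined with~(\ref{item_bottom-top_bs-2}) to force $\fm(r)$ next to $c_1\dotsc c_m$, and the observation that the released $\wait$ can only be consumed by the unique $\wait\bs\go$. The two ``forcing'' points you isolate are exactly the ones the paper also singles out and justifies via basic derivations, so nothing is missing.
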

	
	\begin{proof}
		Induction on $l$. If $l=0$, then $\mathcal{U} = \mathcal{W}$, and the statement trivially holds. To prove the induction step assume that $l>0$. Let us apply the bottom-top analysis.
		
		\begin{center}
			\begin{tabular}{ll}
				&\\[-11pt]
				$\mathcal{U}, \go, \principal{\Rule_{SR}}, \Rule_{SR}^{l-1},\Psi \vdash a_L \cdot \okay$
				&
				\ref{item_bottom-top_bs} $(\bs)$
				\\[2pt]
				&\\[-11pt]
				$\mathcal{U}, \principal{\bigwedge\limits_{r \in SR} (\nabla \fm(r) \cdot \wait \bs \go)}, \Rule_{SR}^{l-1},\Psi \vdash a_L \cdot \okay
				$
				&
				Corollary \ref{corollary_wedge}
				\\[2pt]
				&\\[-11pt]
				$\mathcal{U}, \principal{\nabla \fm(r) \cdot \wait \bs \go}, \Rule_{SR}^{l-1},\Psi \vdash a_L \cdot \okay$
				&
				\ref{item_bottom-top_cdot} $(\cdot)$
				\\[2pt]
				&\\[-11pt]
				$\mathcal{U}, \principal{\nabla \fm(r)}, \wait \bs \go, \Rule_{SR}^{l-1},\Psi \vdash a_L \cdot \okay$
				&
				\ref{item_bottom-top_nabla} $(\nabla)$
				\\[2pt]
				&\\[-11pt]
				$\Theta_1,\fm(r), \Theta_2 \vdash a_L \cdot \okay$
				&
				\\[2pt]
			\end{tabular}
		\end{center}
		
		Here $r \in SR$ is some rule and $\Theta_1,\Theta_2$ are sequences of formulas such that 
		\begin{equation}\label{eq_Phi1_Phi2}
			\Theta_1,\Theta_2 = \mathcal{U}, \wait \bs \go, \Rule_{SR}^{l-1},\Psi.
		\end{equation}
		
		If $r = (c_1 \dotsc c_m \to b_1 \dotsc b_n)$, then $\fm(r) = c_m \bs \dotsc \bs c_1 (b_1 \cdot \dotsc \cdot b_n \cdot \nabla \wait)$ (recall Definition \ref{def_fm}). Note that $\wait$ is not contained in $\mathcal{U}$. In what follows, one can apply statement \ref{item_bottom-top_bs-2} of Lemma \ref{lemma_bottom-top} and infer that $\Theta_1, \fm(r), \Theta_2 \vdash a_L \cdot \okay$ is derivable if and only if $\Theta_1$ equals $\Theta_1^\prime,c_1,\dotsc,c_m$ and $\Theta_1^\prime, b_1 \cdot \dotsc \cdot b_n \cdot \nabla \wait,\Theta_2 \vdash a_L \cdot \okay$ is derivable. By Corollary \ref{corollary_reversible_rules}, the latter sequent is equiderivable with 
		\begin{equation}\label{eq_seq_nabla_wait}
			\Theta_1^\prime, b_1 , \dotsc, b_n, \nabla \wait, \Theta_2 \vdash a_L \cdot \okay.
		\end{equation}
		Since $\Theta_1 = \Theta_1^\prime,c_1,\dotsc,c_m$, it follows from (\ref{eq_Phi1_Phi2}) that $\Theta_1$ is a prefix of $\mathcal{U}$. Consequently, $\Theta_2 = \Theta_1^{\prime\prime},\wait \bs \go, \Rule_{SR}^{l-1},\Psi$ where $\mathcal{U} = \Theta_1^{\prime},c_1,\dotsc,c_m,\Theta_1^{\prime\prime}$. Now let us apply the statement \ref{item_bottom-top_nabla} of Lemma \ref{lemma_bottom-top} and conclude that (\ref{eq_seq_nabla_wait}) is derivable if and only if so is
		\begin{equation}\label{eq_seq_Xi1_Xi2}
			\Xi_1,\wait,\Xi_2 \vdash a_L \cdot \okay
		\end{equation}
		for some $\Xi_1$ and $\Xi_2$ such that 
		$$
		\Xi_1,\Xi_2 = 
		\Theta_1^\prime, b_1 , \dotsc, b_n, \Theta_2 = 
		\Theta_1^\prime, b_1 , \dotsc, b_n, \Theta_1^{\prime\prime},\wait \bs \go, \Rule_{SR}^{l-1},\Psi.
		$$
		
		Which formula can be principal in the last rule application of a basic derivation of (\ref{eq_seq_Xi1_Xi2})? Only $\wait \bs \go$ can be principal because any formula in $\Rule_{SR}^{l-1},\Psi$ is not of the form $\wait \bs C$ nor it contains this formula as a conjunct. Therefore, (\ref{eq_seq_Xi1_Xi2}) is the sequent $\Theta_1^\prime, b_1 , \dotsc, b_n, \Theta_1^{\prime\prime},\wait,\wait \bs \go, \Rule_{SR}^{l-1},\Psi \vdash a_L \cdot \okay$. Applying the bottom-top analysis we conclude that it is equiderivable with
		\begin{equation}\label{eq_seq_W}
			\Theta_1^\prime, b_1 , \dotsc, b_n, \Theta_1^{\prime\prime},\go, \Rule_{SR}^{l-1},\Psi \vdash a_L \cdot \okay
		\end{equation}
		One can apply the induction hypothesis to (\ref{eq_seq_W}) and conclude that it is derivable if and only if (\ref{eq_seq_after_Rule}) is derivable for some $\mathcal{W} \in \mathcal{A}^\ast$ such that $\Theta_1^\prime, b_1 , \dotsc, b_n, \Theta_1^{\prime\prime}
		\Rightarrow_{SR}^{l-1}
		\mathcal{W}$. 
		
		This concludes the proof. Indeed, the sequent (\ref{eq_seq_before_Rule}) is derivable if and only if, for some rule $(c_1\dotsc c_m \to b_1 \dotsc b_n) \in SR$, it holds that
		$
		\mathcal{U} = \Theta_1^\prime, c_1 , \dotsc, c_m, \Theta_1^{\prime\prime} \Rightarrow_{SR} \Theta_1^\prime, b_1 , \dotsc, b_n, \Theta_1^{\prime\prime}
		\Rightarrow_{SR}^{l-1}
		\mathcal{W}.
		$
	\end{proof}
	
	The formula $\Rule_{SR}$ simulates one rule application in $SR$. In what follows, the formula $\bang \Rule_{SR}$, which is a subformula of $\Technical(SR,f)$ (recall Definition \ref{def_Technical}), is able to simulate a derivation in $SR$ of any length. This is what the following lemma is about.
	
	\begin{lemma}\label{lemma_Technical}
		Consider the following sequent:
		\begin{equation}\label{eq_seq_before_Technical}
			\mathcal{U}, \go, \Technical(SR,f),\Psi \vdash a_L \cdot \okay
		\end{equation}
		Here $SR$ is a string rewriting system over an alphabet $\mathcal{A}$, $f:\{a_1,a_2\} \to \Fm$ is a function where $\{a_1,a_2\} \subseteq \mathcal{A}$ and $\mathcal{U} \in \mathcal{A}^\ast$; besides, $\Psi$ is a locked sequence of formulas. Then (\ref{eq_seq_before_Technical}) is derivable if and only if there exists $\mathcal{W} \in \mathcal{A}^\ast$ and $i \in \{1,2\}$ such that $\mathcal{U} a_R \Rightarrow_{SR}^\ast \mathcal{W} a_i \final$ and the sequent $\mathcal{W}, f(a_i), \Psi \vdash a_L \cdot \okay$ is derivable. 
	\end{lemma}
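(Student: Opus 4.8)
The plan is to analyse (\ref{eq_seq_before_Technical}) from the bottom up, in the style of the proof of Lemma \ref{lemma_Rule}: peel $\Technical(SR,f)$ apart one connective at a time with Lemma \ref{lemma_bottom-top}, and invoke Lemma \ref{lemma_Rule} once to collapse the block $\bang\Rule_{SR}$ into a rewriting in $SR$. Abbreviate $E = a_1 \bs f(a_1) \wedge a_2 \bs f(a_2)$, so that $\Technical(SR,f) = \go \bs \big( a_R \cdot \go \cdot \bang\Rule_{SR} \cdot (\go \bs \final \bs E) \big)$. Since $\Technical(SR,f)$ has the shape $\go \bs (\dots)$ and is preceded by $\go$, statement \ref{item_bottom-top_bs} of Lemma \ref{lemma_bottom-top} removes the outer $\go \bs{}$, and Corollary \ref{corollary_reversible_rules} (reversibility of $(\cdot L)$) replaces the resulting antecedent product by the list of its factors, so that (\ref{eq_seq_before_Technical}) is equiderivable with
\[
\mathcal{U}, a_R, \go, \bang\Rule_{SR}, \go \bs \final \bs E, \Psi \vdash a_L \cdot \okay .
\]
Throughout what follows, the tail $\go \bs \final \bs E, \Psi$ is again a locked sequence (as $\go \bs \final \bs E$ is of the form $p \bs A$ with $p \in \PrFm$), so every subsequent appeal to Lemma \ref{lemma_bottom-top} is legitimate.

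Next, statement \ref{item_bottom-top_bang} replaces $\bang\Rule_{SR}$ by $\Rule_{SR}^n$ for some $n \in \Nat$; and, for fixed $n$, Lemma \ref{lemma_Rule} --- applied with $\mathcal{U} a_R$ for its ``$\mathcal{U}$'', with $l = n$, and with $\go \bs \final \bs E, \Psi$ for its ``$\Psi$'' --- gives equiderivability with $\mathcal{W}', \go, \go \bs \final \bs E, \Psi \vdash a_L \cdot \okay$ for some $\mathcal{W}' \in \mathcal{A}^\ast$ satisfying $\mathcal{U} a_R \Rightarrow_{SR}^n \mathcal{W}'$; letting $n$ vary, the side condition becomes $\mathcal{U} a_R \Rightarrow_{SR}^\ast \mathcal{W}'$. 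Now the leading $\go$ feeds $\go \bs \final \bs E$ (statement \ref{item_bottom-top_bs}), leaving $\mathcal{W}', \final \bs E, \Psi \vdash a_L \cdot \okay$; a further application of \ref{item_bottom-top_bs} shows this is derivable iff $\mathcal{W}'$ has the form $\mathcal{W}'' \final$ and $\mathcal{W}'', E, \Psi \vdash a_L \cdot \okay$ is derivable --- here the bottom-top analysis does the essential work, forcing $\final$ to sit at the right end of the rewritten word. Finally statement \ref{item_bottom-top_wedge} (equivalently Corollary \ref{corollary_wedge}) followed once more by \ref{item_bottom-top_bs} turns this into: for some $i \in \{1,2\}$, $\mathcal{W}''$ has the form $\mathcal{W} a_i$ and $\mathcal{W}, f(a_i), \Psi \vdash a_L \cdot \okay$ is derivable. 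Composing the whole chain yields the ``only if'' direction; I would display it, as in Lemma \ref{lemma_Rule}, as a compact bottom-top table with the principal formula framed in each line. The ``if'' direction is the same chain read upwards, using the routine ``if'' halves of the clauses of Lemma \ref{lemma_bottom-top} and of Lemma \ref{lemma_Rule}: from a derivation of $\mathcal{W}, f(a_i), \Psi \vdash a_L \cdot \okay$ one reinserts $a_i$ (via $(\bs L)$ against $a_i \vdash a_i$), then $\final$, then the block $\Rule_{SR}^n$ for $n$ witnessing $\mathcal{U} a_R \Rightarrow_{SR}^n \mathcal{W} a_i \final$, then $\bang\Rule_{SR}$, reassembles the product, and prefixes $\go$.

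I expect no serious obstacle beyond disciplined bookkeeping of the positions of the auxiliary primitive letters $a_R, \go, \final, a_1, a_2$ and checking at each step that the antecedent still decomposes as ``a word of primitives, then a locked sequence'' as Lemma \ref{lemma_bottom-top} requires. The one point worth stating carefully is the use of \ref{item_bottom-top_bs} in the ``only if'' direction: derivability of $\mathcal{W}', \final \bs E, \Psi \vdash a_L \cdot \okay$ is not \emph{assumed} to entail that $\mathcal{W}'$ ends in $\final$, but Lemma \ref{lemma_bottom-top}(\ref{item_bottom-top_bs}) --- through the basic-derivation normal form of Lemma \ref{lemma_basic_derivation} --- forces it, and likewise forces $\mathcal{W}''$ to end in $a_i$. (One also uses that $a_R \in \mathcal{A}$, which holds for every $SR$ to which this lemma will be applied.)
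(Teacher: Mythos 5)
Your proposal is correct and follows essentially the same route as the paper's proof: peel $\Technical(SR,f)$ with the bottom-top analysis (Lemma \ref{lemma_bottom-top}, items \ref{item_bottom-top_bs}, \ref{item_bottom-top_cdot}/reversibility of $(\cdot L)$, \ref{item_bottom-top_bang}), invoke Lemma \ref{lemma_Rule} once with $\mathcal{U} a_R$ and the locked tail $\go \bs \final \bs E, \Psi$, and finish with \ref{item_bottom-top_bs}, \ref{item_bottom-top_wedge}, \ref{item_bottom-top_bs} to force the word to end in $\final$ and then in $a_i$, exactly as in the paper. The only differences are cosmetic (using Corollary \ref{corollary_reversible_rules} in place of item \ref{item_bottom-top_cdot}, and explicitly noting that the tail is locked and that $a_R \in \mathcal{A}$, which the paper leaves implicit).
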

	
	\begin{proof}
		We apply the bottom-top analysis:
		
		\begin{center}
			\begin{tabular}{ll}
				&\\[-11pt]
				$\mathcal{U}, \go, \principal{\Technical(SR,f)},\Psi \vdash a_L \cdot \okay$
				&
				\ref{item_bottom-top_bs} $(\bs)$
				\\[2pt]
				&\\[-11pt]
				$\mathcal{U}, \principal{a_R \cdot \go \cdot 
				\bang\Rule_{SR} \cdot \go \bs \final \bs \left( a_1 \bs f(a_1) \wedge a_2 \bs f(a_2) \right)},\Psi \vdash a_L \cdot \okay$
				&
				\ref{item_bottom-top_cdot} $(\cdot)$
				\\[2pt]
				&\\[-11pt]
				$\mathcal{U}, a_R, \go, 
				\principal{\bang\Rule_{SR}}, \go \bs \final \bs \left( a_1 \bs f(a_1) \wedge a_2 \bs f(a_2) \right),\Psi \vdash a_L \cdot \okay$
				&
				\ref{item_bottom-top_bang} $(\bang)$
				\\[2pt]
				&\\[-11pt]
				$\mathcal{U}, a_R, \go, 
				\Rule_{SR}^l, \go \bs \final \bs \left( a_1 \bs f(a_1) \wedge a_2 \bs f(a_2) \right),\Psi \vdash a_L \cdot \okay$
				&
				\\[2pt]
			\end{tabular}
		\end{center}
		
		Summing up, (\ref{eq_seq_before_Technical}) is derivable if and only if the last sequent is derivable for some $l$. Lemma \ref{lemma_Rule} implies that the latter sequent is derivable if and only if there is $\mathcal{W}^{\prime\prime} \in \mathcal{A}^\ast$ such that $\mathcal{U} a_R \Rightarrow_{SR}^l \mathcal{W}^{\prime\prime}$ and such that the sequent $\mathcal{W}^{\prime\prime}, \go, \go \bs \final \bs \left( a_1 \bs f(a_1) \wedge a_2 \bs f(a_2) \right),\Psi \vdash a_L \cdot \okay$ is derivable. 
		
		Let us continue the bottom-top analysis. Statement \ref{item_bottom-top_bs} of Lemma \ref{lemma_bottom-top} implies that the latter sequent is derivable if and only if $\mathcal{W}^{\prime\prime} = \mathcal{W}^\prime \final$ and the sequent
		$
		\mathcal{W}^\prime, a_1 \bs f(a_1) \wedge a_2 \bs f(a_2),\Psi \vdash a_L \cdot \okay
		$
		is derivable. By Lemma \ref{lemma_bottom-top} (\ref{item_bottom-top_wedge}), derivability of this sequent is equivalent to derivability of the sequent
		$
		\mathcal{W}^\prime, a_i \bs f(a_i),\Psi \vdash a_L \cdot \okay
		$
		for some $i \in \{1,2\}$. Again, by Lemma \ref{lemma_bottom-top} (\ref{item_bottom-top_bs}), this is the case if and only if $\mathcal{W}^\prime = \mathcal{W} a_i$ and the sequent
		$
		\mathcal{W}, f(a_i),\Psi \vdash a_L \cdot \okay
		$
		is derivable. It remains to note that $\mathcal{W}^{\prime\prime} = \mathcal{W} a_i \final$.
	\end{proof}
	
	The following lemma explains the idea behind the formula $\OKAY = \okay \bs \okay$.
	
	\begin{lemma}\label{lemma_ok}
		The sequent $a_L, \okay, \Psi \vdash a_L \cdot \okay$ where $\Psi$ consists of formulas of the form $\OKAY \wedge (p \bs B)$ is derivable.
	\end{lemma}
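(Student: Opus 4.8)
The plan is to peel the conjunctions in $\Psi$ down to their left conjuncts $\OKAY = \okay \bs \okay$ and then repeatedly ``cancel'' these implications against a travelling $\okay$ on their left, finishing with a single application of $(\cdot R)$.

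Concretely, I would first establish the auxiliary claim that $\okay, \Psi \vdash \okay$ is derivable whenever $\Psi$ consists of formulas of the form $\OKAY \wedge (p \bs B)$, by induction on the length of $\Psi$. The base case $\okay \vdash \okay$ is the axiom $(\ax)$. For the induction step, write $\Psi = (\OKAY \wedge (p \bs B)), \Psi'$, apply $(\wedge L_1)$ to the leading formula to reduce the goal to $\okay, \okay \bs \okay, \Psi' \vdash \okay$, and then apply $(\bs L)$ with the distinguished occurrence $\okay \bs \okay$ being principal, the single-formula context $\Pi = \okay$ to its immediate left, and $C = \okay$:
$$
\infer[(\bs L)]{\okay, \okay \bs \okay, \Psi' \vdash \okay}{\okay, \Psi' \vdash \okay & \okay \vdash \okay}.
$$
The right premise is an axiom, and the left premise $\okay, \Psi' \vdash \okay$ is derivable by the induction hypothesis, since $\Psi'$ is shorter than $\Psi$ and still consists only of formulas of the required form.

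Given the auxiliary claim, the lemma follows by one application of $(\cdot R)$ splitting the antecedent as $a_L \mid \okay, \Psi$:
$$
\infer[(\cdot R)]{a_L, \okay, \Psi \vdash a_L \cdot \okay}{a_L \vdash a_L & \okay, \Psi \vdash \okay}.
$$

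There is no genuine obstacle here; the argument is entirely syntactic. The only points requiring a moment's care are the adjacency requirement of $(\bs L)$ — which is exactly why one applies $(\wedge L_1)$ first, so as to expose $\okay \bs \okay$ immediately to the right of the travelling $\okay$ — and the degenerate case $\Psi$ empty, which is covered by the base case of the induction (and by $(\cdot R)$ applied to $a_L \vdash a_L$ and $\okay \vdash \okay$). One could alternatively deduce the statement from Lemma \ref{lemma_bottom-top}, but the direct construction above is shorter.
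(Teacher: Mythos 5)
Your proof is correct and follows essentially the same route as the paper's own argument: reduce the sequent via $(\wedge L_1)$ to the case where only $\okay\bs\okay$'s remain, cancel these against the travelling $\okay$ with $(\bs L)$, and split off $a_L$ with $(\cdot R)$. The only difference is that you interleave the $(\wedge L_1)$ steps with the $(\bs L)$ steps in a single induction rather than stripping all conjunctions first, which is an immaterial reorganization.
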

	
	It is proved by straightforward induction. Informally, this lemma holds because the sequent of interest is obtained from the sequent $a_L,\okay,(\okay \bs \okay)^c \vdash a_L \cdot \okay$ (where $c = \vert \Psi \vert$) by applying $(\wedge L_2)$ several times; the latter sequent is obviously derivable.
	
	The final auxiliary lemma is concerned with the formula $[\go]^\rightarrow p = \go \bs (p \cdot \go)$.
	
	\begin{lemma}\label{lemma_rightarrow}
		Let $\Gamma \in \PrFm^\ast$ be a sequence of primitive formulas, let $p \in \PrFm$ be a primitive formula, let $c \in \Nat$, and let $\Psi$ be locked. Then the sequent $\Gamma,\go,([ \go ]^\rightarrow p)^c, \Psi \vdash a_L \cdot \okay$ is derivable if and only if so is $\Gamma,p^c,\go, \Psi \vdash a_L \cdot \okay$.
	\end{lemma}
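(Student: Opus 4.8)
The plan is to argue by induction on $c$, applying the bottom-top analysis of Lemma \ref{lemma_bottom-top} at each step. The base case $c = 0$ needs no work: since $p^0$ is the empty sequence, both sequents are literally $\Gamma, \go, \Psi \vdash a_L \cdot \okay$. So I would fix $c > 0$ and assume the equivalence for $c - 1$.

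For the induction step, unfold the abbreviation: $[ \go ]^\rightarrow p = \go \bs (p \cdot \go)$. This formula is locked (it has the form $q \bs B$ with $q = \go \in \PrFm$), so the whole tail $([ \go ]^\rightarrow p)^{c-1}, \Psi$ is a locked sequence of formulas, and I can run the bottom-top analysis on the leftmost occurrence of $[ \go ]^\rightarrow p$, with $\Gamma, \go$ playing the role of the primitive prefix:
\begin{center}
\begin{tabular}{ll}
&\\[-11pt]
$\Gamma, \go, \principal{\go \bs (p \cdot \go)}, ([ \go ]^\rightarrow p)^{c-1}, \Psi \vdash a_L \cdot \okay$
&
\ref{item_bottom-top_bs} $(\bs)$
\\[2pt]
&\\[-11pt]
$\Gamma, \principal{p \cdot \go}, ([ \go ]^\rightarrow p)^{c-1}, \Psi \vdash a_L \cdot \okay$
&
\ref{item_bottom-top_cdot} $(\cdot)$
\\[2pt]
&\\[-11pt]
$\Gamma, p, \go, ([ \go ]^\rightarrow p)^{c-1}, \Psi \vdash a_L \cdot \okay$
&
\\[2pt]
\end{tabular}
\end{center}
The first step is legitimate precisely because the primitive formula immediately to the left of $\go \bs (p \cdot \go)$ is $\go$, which is exactly what statement \ref{item_bottom-top_bs} of Lemma \ref{lemma_bottom-top} demands. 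Now apply the induction hypothesis to the last sequent with $\Gamma$ replaced by the still-primitive sequence $\Gamma, p$ and with $c$ replaced by $c - 1$: it is derivable iff $\Gamma, p, p^{c-1}, \go, \Psi \vdash a_L \cdot \okay$ is, i.e.\ iff $\Gamma, p^c, \go, \Psi \vdash a_L \cdot \okay$ is derivable. Chaining the three equivalences yields the claim for $c$, and since each link in the chain is an ``iff'', both directions are obtained at once.

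This is pure bookkeeping, and I do not expect a genuine obstacle. The only points that need a moment's care are checking that $[ \go ]^\rightarrow p$, and hence the whole tail $([ \go ]^\rightarrow p)^{c-1}, \Psi$, is locked (so that Lemma \ref{lemma_bottom-top} applies at all), and noticing that after the two bottom-top steps the newly exposed $\go$ again sits immediately before the remaining copies of $[ \go ]^\rightarrow p$, so that the resulting sequent is in exactly the shape required to invoke the induction hypothesis with prefix $\Gamma, p$.
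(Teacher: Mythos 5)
Your proposal is correct and follows essentially the same route as the paper's own proof: induction on $c$, with the induction step handled by the same two bottom-top steps (statement \ref{item_bottom-top_bs} applied to $[\go]^\rightarrow p = \go \bs (p \cdot \go)$ with the preceding $\go$, then statement \ref{item_bottom-top_cdot} for $p \cdot \go$), followed by the induction hypothesis with the primitive prefix $\Gamma, p$. Your explicit check that $[\go]^\rightarrow p$ is locked, so the tail remains a locked sequence, is exactly the bookkeeping the paper leaves implicit.
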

	
	\begin{proof}
		Induction on $c$. The case $c = 0$ is trivial. To prove the induction step assume that $c>0$. Let us apply the bottom-top analysis.
		
		\begin{center}
			\begin{tabular}{ll}
				&\\[-11pt]
				$\Gamma,\go,\principal{[ \go ]^\rightarrow p},([ \go ]^\rightarrow p)^{c-1}, \Psi \vdash a_L \cdot \okay$
				&
				\ref{item_bottom-top_bs} $(\bs)$
				\\[2pt]
				&\\[-11pt]
				$\Gamma,\principal{p \cdot \go},([ \go ]^\rightarrow p)^{c-1}, \Psi \vdash a_L \cdot \okay$
				&
				\ref{item_bottom-top_cdot} $(\cdot)$
				\\[2pt]
				&\\[-11pt]
				$\Gamma,p, \go,([ \go ]^\rightarrow p)^{c-1}, \Psi \vdash a_L \cdot \okay$
				&\\[2pt]
			\end{tabular}
		\end{center}
		The induction hypothesis completes the proof.
	\end{proof}

	\subsection{Proof of Lemma \ref{lemma_main}}\label{ssec_proof}
	
	We are ready to prove Lemma \ref{lemma_main}. Recall that we study derivability of the sequent (\ref{eq_seq_main}) of the form $a_L, a_1^{\inp}, a_X, \energy, \Energy_{h_M}, \dotsc, \Energy_{h_1} \vdash a_L \cdot \okay$ for $\inp = \langle \langle X,\pi(\alpha),i,e \rangle,\langle n_1,\dotsc,n_i \rangle\rangle$. We aim to prove that 1.~ its derivability implies that $\inp \in \SATOO$ and that 2.~ if $\inp \in \SATOO$ and $\omega^{h_1}+\dotsc+\omega^{h_M} > \alpha$, then (\ref{eq_seq_main}) is derivable.
	
	\begin{proof}
		We prove both statements together by induction on $\beta = \omega^{h_1}+\dotsc+\omega^{h_M}$. The base case is where $\beta = 0$, i.e. $M = 0$. In this case, obviously, (\ref{eq_seq_main}) is not derivable. Since $0 \not> \alpha$, both statements of Lemma \ref{lemma_main} trivially hold. Now, assume that $\beta > 0$.
				
		\textbf{Case 1.} $h_M > 0$ (i.e. $\beta$ is limit). Then $\Energy_{h_M} = \OKAY \wedge \energy \bs \left(\energy \cdot \bang \Energy_{h_M-1}\right)$. Let us apply the bottom-top analysis.
		
		\begin{center}
			\begin{tabular}{ll}
				&\\[-11pt]
				$a_L, a_1^{\inp}, a_X, \energy, \principal{\Energy_{h_M}}, \dotsc, \Energy_{h_1} \vdash a_L \cdot \okay$
				&
				\ref{item_bottom-top_wedge} $(\wedge)$
				\\[2pt]
				&\\[-11pt]
				$a_L, a_1^{\inp}, a_X, \energy, \principal{\energy \bs \left(\energy \cdot \bang \Energy_{h_M-1}\right)}, \Energy_{h_{M-1}}, \dotsc, \Energy_{h_1} \vdash a_L \cdot \okay$
				&
				\ref{item_bottom-top_bs} $(\bs)$
				\\[2pt]
				&\\[-11pt]
				$a_L, a_1^{\inp}, a_X, \principal{\energy \cdot \bang \Energy_{h_M-1}}, \Energy_{h_{M-1}}, \dotsc, \Energy_{h_1} \vdash a_L \cdot \okay$
				&
				\ref{item_bottom-top_cdot} $(\cdot)$
				\\[2pt]
				&\\[-11pt]
				$a_L, a_1^{\inp}, a_X, \energy, \principal{\bang \Energy_{h_M-1}}, \Energy_{h_{M-1}}, \dotsc, \Energy_{h_1} \vdash a_L \cdot \okay$
				&
				\ref{item_bottom-top_bang} $(\bang)$
				\\[2pt]	
			\end{tabular}
			\begin{equation}\label{eq_seq_BTA1}
				\mbox{for some $l$,} \quad a_L, a_1^{\inp}, a_X, \energy, \Energy_{h_M-1}^{l}, \Energy_{h_{M-1}}, \dotsc, \Energy_{h_1} \vdash a_L \cdot \okay
			\end{equation}
		\end{center}
		
		We can apply the induction hypothesis to (\ref{eq_seq_BTA1}). Let $\beta^\prime_l \eqdef \omega^{h_1}+\dotsc+\omega^{h_{M-1}}+\omega^{h_M-1}\cdot l$. By the induction hypothesis:
		\begin{enumerate}
			\item if (\ref{eq_seq_BTA1}) is derivable, then $\inp \in \SATOO$;
			\item if $\inp \in \SATOO$ and $\beta^\prime_l > \alpha$, then (\ref{eq_seq_BTA1}) is derivable.
		\end{enumerate}
		Therefore, if (\ref{eq_seq_main}) is derivable, then so is (\ref{eq_seq_BTA1}) (for some $l$) and thus $\inp \in \SATOO$, as desired. Conversely, let $\inp \in \SATOO$ and let $\beta > \alpha$. Since $\beta = \sup_{l \in \Nat} \beta^\prime_l$, it holds that $\beta^\prime_l > \alpha$ for some $l$. In what follows, (\ref{eq_seq_BTA1}) is derivable for this $l$ and hence so is (\ref{eq_seq_main}).
		
		\textbf{Case 2.} $h_M = 0$. Then $\Energy_{h_M} = \OKAY \wedge \energy \bs ((a_\Sigma \bs \Energy_{\Sigma}) \wedge (a_\Pi \bs \Energy_{\Pi}))$. 
		
		\textbf{Case 2a.} $X = \Sigma$. Again, we use the bottom-top analysis (cf. Example \ref{example_BTA}):
		
		\begin{center}
			\begin{tabular}{ll}
				&\\[-11pt]
				$a_L, a_1^{\inp}, a_\Sigma, \energy, \principal{\Energy_{h_M}},\Energy_{h_{M-1}}, \dotsc, \Energy_{h_1} \vdash a_L \cdot \okay$
				&
				\ref{item_bottom-top_wedge} $(\wedge)$
				\\[2pt]
				&\\[-11pt]
				$a_L, a_1^{\inp}, a_\Sigma, \energy, \principal{\energy \bs \Energy} ,\Energy_{h_{M-1}}, \dotsc, \Energy_{h_1} \vdash a_L \cdot \okay$
				&
				\ref{item_bottom-top_bs} $(\bs)$
				\\[2pt]
				&\\[-11pt]
				$a_L, a_1^{\inp}, a_\Sigma, \principal{(a_\Sigma \bs \Energy_{\Sigma}) \wedge (a_\Pi \bs \Energy_{\Pi})} ,\Energy_{h_{M-1}}, \dotsc, \Energy_{h_1} \vdash a_L \cdot \okay$
				&
				\ref{item_bottom-top_wedge} $(\wedge)$
				\\[2pt]
				&\\[-11pt]
				$a_L, a_1^{\inp}, a_\Sigma, \principal{a_\Sigma \bs \Energy_{\Sigma}} ,\Energy_{h_{M-1}}, \dotsc, \Energy_{h_1} \vdash a_L \cdot \okay$
				&
				\ref{item_bottom-top_bs} $(\bs)$
				\\[2pt]
				&\\[-11pt]
				$a_L, a_1^{\inp}, \principal{\Energy_{\Sigma}} ,\Energy_{h_{M-1}}, \dotsc, \Energy_{h_1} \vdash a_L \cdot \okay
				$
				&
				\ref{item_bottom-top_cdot} $(\cdot)$
				\\[2pt]
			\end{tabular}
			\begin{equation}\label{eq_seq_BTA2a}
				a_L, a_1^{\inp}, \go, \Technical\left(\sr_0,f_0\right), \mathcal{B} \vdash a_L \cdot \okay
			\end{equation}
		\end{center}
		Here $\mathcal{B}= \OKAY \wedge [ \go ]^?\bang [ \go ]^\rightarrow a_2,
		\OKAY \wedge \Technical\left(\sr_1,f_\Sigma\right), \Energy_{h_{M-1}}, \dotsc, \Energy_{h_1}$ (recall that $\Energy_\Sigma = \go \cdot \Technical\left(\sr_0,f_0\right) \cdot 
		\left(\OKAY \wedge [ \go ]^?\bang [ \go ]^\rightarrow a_2 \right) \cdot
		\left(\OKAY \wedge \Technical\left(\sr_1,f_\Sigma\right) \right)$). It follows from Lemma \ref{lemma_Technical} applied to (\ref{eq_seq_BTA2a}) that it is derivable if and only if there exist $\mathcal{W}$ and $j \in \{1,2\}$ such that 
		\begin{equation}\label{eq_SR0}
			a_L a_1^{\inp} a_R \Rightarrow_{\sr_0}^\ast \mathcal{W} a_j \final
		\end{equation}
		and the sequent $\mathcal{W}, f_0(a_j), \mathcal{B} \vdash a_L \cdot \okay$ is derivable. The fact (\ref{eq_SR0}) is equivalent to 
		$$
		\mathcal{W} a_j = a_L \mathcal{F}_0\left(a_1^{\inp}\right)
		$$
		since $\sr_0$ $(a_L,a_R,\final)$-implements $\mathcal{F}_0$ (according to Corollary \ref{corollary_Church-Turing}, $a_L u a_R \Rightarrow_{\sr_0}^\ast v \final$ if and only if $v = a_L \mathcal{F}_0(u)$). Note that $\mathcal{F}_0(w)$ ends by either $a_1$ or $a_2$. Let us consider each of the two subcases.
		
		\textbf{Subcase i.} $\mathcal{F}_0\left(a_1^{\inp}\right) = a_1$. This happens, according to the definition of $\mathcal{F}_0$ (see Definition \ref{def_Energy}) iff $\alpha = 0$ and $\inp \in \SATOO$. The sequent $\mathcal{W}, f_0(a_1), \mathcal{B} \vdash a_L \cdot \okay$ equals $a_L, \okay, \mathcal{B} \vdash a_L \cdot \okay$ (since $f_0(a_1) = \okay$). Lemma \ref{lemma_ok} implies that it is derivable.
		
		\textbf{Subcase ii.} $\mathcal{F}_0\left(a_1^{\inp}\right) = a_1^{\inp} a_2$. This happens iff $\alpha > 0$. The sequent $\mathcal{W}, f_0(a_2), \mathcal{B} \vdash a_L \cdot \okay$ equals $a_L, a_1^{\inp}, \go, \mathcal{B} \vdash a_L \cdot \okay$.
		
		Concluding the above reasonings, if $\alpha = 0$, then (\ref{eq_seq_main}) is derivable if and only if so is (\ref{eq_seq_BTA2a}), which holds if and only if $\inp \in \SATOO$. For $\alpha > 0$, we have proved that (\ref{eq_seq_main}) is equiderivable with the sequent $a_L, a_1^{\inp}, \go, \mathcal{B} \vdash a_L \cdot \okay$. We proceed with the bottom-top analysis of the latter sequent. Recall that 
		$\mathcal{B} = \OKAY \wedge [ \go ]^?\bang [ \go ]^\rightarrow a_2, \mathcal{B}_2$ where $\mathcal{B}_2 = \OKAY \wedge \Technical\left(\sr_1,f_\Sigma\right), \Energy_{h_{M-1}}, \dotsc, \Energy_{h_1}$.
		
		\begin{center}
			\begin{tabular}{ll}
				&\\[-11pt]
				$a_L, a_1^{\inp}, \go, \principal{\OKAY \wedge [ \go ]^?\bang [ \go ]^\rightarrow a_2} , \mathcal{B}_2 \vdash a_L \cdot \okay$
				&
				\ref{item_bottom-top_wedge} $(\wedge)$
				\\[2pt]
				&\\[-11pt]
				$a_L, a_1^{\inp}, \go, \principal{[ \go ]^?\bang [ \go ]^\rightarrow a_2} , \mathcal{B}_2 \vdash a_L \cdot \okay$
				&
				\ref{item_bottom-top_bs} $(\bs)$
				\\[2pt]
				&\\[-11pt]
				$a_L, a_1^{\inp}, \principal{\go \cdot \bang [ \go ]^\rightarrow a_2} , \mathcal{B}_2 \vdash a_L \cdot \okay$
				&
				\ref{item_bottom-top_cdot} $(\cdot)$
				\\[2pt]
				&\\[-11pt]
				$a_L, a_1^{\inp}, \go, \principal{\bang [ \go ]^\rightarrow a_2} , \mathcal{B}_2 \vdash a_L \cdot \okay$
				&
				\ref{item_bottom-top_bang} $(\bang)$
				\\[2pt]
				&\\[-11pt]
				$a_L, a_1^{\inp}, \go, \left([ \go ]^\rightarrow a_2\right)^c , \mathcal{B}_2 \vdash a_L \cdot \okay$
				&
				Lemma \ref{lemma_rightarrow}
				\\[2pt]
				&\\[-11pt]
				$a_L, a_1^{\inp}, a_2^{c}, \go, \principal{\OKAY \wedge \Technical\left(\sr_1,f_\Sigma\right)}, \Energy_{h_{M-1}}, \dotsc, \Energy_{h_1} \vdash a_L \cdot \okay$
				&
				\ref{item_bottom-top_wedge} $(\wedge)$
				\\[2pt]
			\end{tabular}
			\begin{equation}\label{eq_seq_Technical_Sigma}
				a_L, a_1^{\inp}, a_2^c, \go, \Technical\left(\sr_1,f_\Sigma\right) , \Energy_{h_{M-1}}, \dotsc, \Energy_{h_1} \vdash a_L \cdot \okay
			\end{equation}
		\end{center}
		
		Lemma \ref{lemma_Technical} applied to (\ref{eq_seq_Technical_Sigma}) implies that this sequent is derivable if and only if there exist $\mathcal{V}$ and $b \in \{a_1,a_2\}$ such that 
		\begin{equation}\label{eq_SR1}
			a_L a_1^{\inp} a_2^c a_R \Rightarrow_{\sr_1}^\ast \mathcal{V} b \final
		\end{equation}
		and the sequent $\mathcal{V}, f_\Sigma(b), \Energy_{h_{M-1}}, \dotsc, \Energy_{h_1} \vdash a_L \cdot \okay$ is derivable. The fact (\ref{eq_SR1}) is equivalent to the fact that $\mathcal{V} b = a_L  \mathcal{F}_1\left(a_1^{\inp} a_2^c\right)$
		since $\sr_1$ $(a_L,a_R,\final)$-implements $\mathcal{F}_1$.
		
		\textbf{Subcase A.} $\mathcal{F}_1\left(a_1^{\inp} a_2^c\right) = a_1^{\langle \langle \Pi, \pi(\alpha^\prime), i+j, e^\prime \rangle, \langle n_1,\dotsc,n_{i+j} \rangle \rangle} a_2$. This happens if and only if $c = \langle c^\prime, y, \langle n_{i+1}, \dotsc,n_{i+j}\rangle \rangle$ where $c^\prime = \langle \pi(\alpha^\prime),j,e^\prime \rangle$ for $\alpha^\prime < \alpha$ such that $\Halt(c^\prime,e,y)$ (Definition \ref{def_Energy}). In this case, since $f_\Sigma(a_2) = a_\Pi \cdot \energy$, the sequent (\ref{eq_seq_Technical_Sigma}) is equiderivable with 
		\begin{equation}\label{eq_seq_BTA2aA}
			a_L, a_1^{\langle \langle \Pi, \pi(\alpha^\prime), i+j, e^\prime \rangle, \langle n_1,\dotsc,n_{i+j} \rangle \rangle}, a_\Pi , \energy, \Energy_{h_{M-1}}, \dotsc, \Energy_{h_1} \vdash a_L \cdot \okay.
		\end{equation}

		\textbf{Subcase B.} Otherwise, $\mathcal{F}_1\left(a_1^{\inp} a_2^c\right) = a_1$. In this case, (\ref{eq_seq_Technical_Sigma}) is derivable if and only if so is $a_L, \fail, \Energy_{h_{M-1}}, \dotsc, \Energy_{h_1} \vdash a_L \cdot \okay$. However, the latter sequent is not derivable.
		
		Now, we can apply the induction hypothesis to (\ref{eq_seq_BTA2aA}) because $\beta^\prime = \omega^{h_1}+\dotsc+\omega^{h_{M-1}} < \beta$. We obtain the following.
		\begin{enumerate}
			\item If (\ref{eq_seq_main}) is derivable, then the sequent of the form (\ref{eq_seq_BTA2aA}) is derivable for some $\inp^\prime = \langle \langle \Pi, \pi(\alpha^\prime), i+j, e^\prime \rangle, \langle n_1,\dotsc,n_{i+j} \rangle \rangle$. By the induction hypothesis, this means that  $\inp^\prime \in \SATOO$. The latter implies that $\Nat \vDash \varphi^{\Sigma_{\alpha}}_{e,i}(n_1,\dotsc,n_i)$ because this formula is a disjunction of formulas $\exists x_{i+1} \dotsc \exists x_{i+j} \; \varphi^{\Pi_{\alpha^\prime}}_{e^\prime,i+j}(n_1,\dotsc,n_i,x_{i+1}, \dotsc, x_{i+j})$. Therefore, $\inp \in \SATOO$.
			\item If $\langle \langle \Sigma,\pi(\alpha),i,e \rangle,\langle n_1,\dotsc,n_i\rangle\rangle \in \SATOO$, then there exists a triple of the form $c^\prime = \langle \pi(\alpha^\prime),j,e^\prime\rangle$ such that $\alpha^\prime < \alpha$, $c^\prime \in W_e$ (i.e. $\Halt(c^\prime,e,y)$ holds for some $y$) and $\inp^\prime = \langle \langle \Pi, \pi(\alpha^\prime), i+j, e^\prime \rangle, \langle n_1,\dotsc,n_{i+j} \rangle \rangle \in \SATOO$. The inequalities $\alpha^\prime < \alpha$ and $\alpha < \beta^\prime + 1$ imply that $\alpha^\prime < \beta^\prime$. Thus one can apply the induction hypothesis and conclude that (\ref{eq_seq_BTA2aA}) is derivable for such $\inp^\prime$, which implies derivability of (\ref{eq_seq_main}).
		\end{enumerate}
		
		\textbf{Case 2b.} $X = \Pi$. The proof is of the same structure as that for Case 2a so we shall only outline the main stages of the argument without repeating the same details.
		
		\begin{enumerate}
			\item The sequent $a_L, a_1^{\inp}, a_\Pi, \energy, \Energy_{h_M}, \dotsc, \Energy_{h_1} \vdash a_L \cdot \okay$ is equiderivable with the sequent 
			\begin{equation}\label{eq_seq_2b_before_Technical_0}
				a_L, a_1^{\inp}, \go, \Technical\left(\sr_0,f_0\right), \mathcal{B}^\prime \vdash a_L \cdot \okay
			\end{equation}
			where
			$
			\mathcal{B}^\prime = \OKAY \wedge [ \go ]^\rightarrow a_2^\ast, 
			\OKAY \wedge \Technical(\sr_1,f_\Pi) , \Energy_{h_{M-1}}, \dotsc, \Energy_{h_1}
			$
			(according to the definition of $\Energy_{\Pi}$). This is proved using the bottom-up analysis.
			\item If $\alpha = 0$, then the proof is the same as for the case $X=\Sigma$. \item If $\alpha > 0$, then (\ref{eq_seq_2b_before_Technical_0}) is equiderivable with the following sequent:
			\begin{equation}\label{eq_seq_2b_after_Technical_0}
				a_L, a_1^{\inp}, \go, \mathcal{B}^\prime \vdash a_L \cdot \okay
			\end{equation}
			We proceed with the bottom-top analysis of (\ref{eq_seq_2b_after_Technical_0}). Let us denote by $\mathcal{B}^\prime_2$ the sequence $\OKAY \wedge \Technical(\sr_1,f_\Pi) , \Energy_{h_{M-1}}, \dotsc, \Energy_{h_1}$.	
			
			\begin{center}
				\begin{tabular}{ll}
					&\\[-11pt]
					$a_L, a_1^{\inp}, \go, \principal{\OKAY \wedge [ \go ]^\rightarrow a_2^\ast} , \mathcal{B}^\prime_2 \vdash a_L \cdot \okay$
					&
					\ref{item_bottom-top_wedge} $(\wedge)$
					\\[2pt]
					&\\[-11pt]
					$a_L, a_1^{\inp}, \go, \principal{[ \go ]^\rightarrow a_2^\ast} , \mathcal{B}^\prime_2 \vdash a_L \cdot \okay$
					&
					\ref{item_bottom-top_bs} $(\bs)$
					\\[2pt]
					&\\[-11pt]
					$a_L, a_1^{\inp}, \principal{a_2^\ast \cdot \go} , \mathcal{B}^\prime_2 \vdash a_L \cdot \okay$
					&
					\ref{item_bottom-top_cdot} $(\cdot)$
					\\[2pt]
					&\\[-11pt]
					$a_L, a_1^{\inp}, \principal{a_2^\ast}, \go, \mathcal{B}^\prime_2 \vdash a_L \cdot \okay$
					&
					Corollary \ref{corollary_reversible_rules}
					\\[2pt]
					&\\[-11pt]
					$\forall c \in \Nat \qquad a_L, a_1^{\inp}, a_2^c, \go, \mathcal{B}^\prime_2 \vdash a_L \cdot \okay$
					&
					\\[2pt]
				\end{tabular}
			\end{center}
			
			Consequently, Lemma \ref{lemma_bottom-top} (\ref{item_bottom-top_wedge}) implies that (\ref{eq_seq_2b_after_Technical_0}) is derivable if and only if, for each $c \in \Nat$, the following sequent is derivable:
			\begin{equation}\label{eq_seq_Technical_Pi}
				a_L, a_1^{\inp}, a_2^c, \go, \Technical(\sr_1,f_\Pi) , \Energy_{h_{M-1}}, \dotsc, \Energy_{h_1} \vdash a_L \cdot \okay.
			\end{equation}
			\item Derivability of (\ref{eq_seq_Technical_Pi}) is analysed in the same way as that of (\ref{eq_seq_Technical_Sigma}). Namely, the following two subcases arise.
			
			\begin{enumerate}
				\item If $c = \langle c^\prime, y, \langle n_{i+1}, \dotsc,n_{i+j}\rangle \rangle$ where $c^\prime = \langle \pi(\alpha^\prime),j,e^\prime \rangle$ for $\alpha^\prime < \alpha$ such that $\Halt(c^\prime,e,y)$ holds, then (\ref{eq_seq_Technical_Pi}) is equiderivable with the sequent 
				\begin{equation}\label{eq_seq_BTA2b}
					a_L, a_1^{\langle \langle \Sigma, \pi(\alpha^\prime), i+j, e^\prime \rangle, \langle n_1,\dotsc,n_{i+j} \rangle \rangle}, a_\Sigma , \energy, \Energy_{h_{M-1}}, \dotsc, \Energy_{h_1} \vdash a_L \cdot \okay.
				\end{equation}
				\item If $c$ is not of the form described above, then (\ref{eq_seq_Technical_Pi}) is equiderivable with the sequent 
				$
				a_L, \okay, \Energy_{h_{M-1}}, \dotsc, \Energy_{h_1} \vdash a_L \cdot \okay.
				$
				By Lemma \ref{lemma_ok}, the latter is derivable.
			\end{enumerate}
		\end{enumerate}
		Summarizing these reasonings, for $X = \Pi$ and $\alpha > 0$, the sequent (\ref{eq_seq_main}) is derivable if and only if, for all triples $c^\prime = \langle \pi(\alpha^\prime),j,e^\prime \rangle$ such that $\alpha^\prime < \alpha$, $c^\prime \in W_e$ and for all $n_{i+1},\dotsc,n_{i+j} \in \Nat$, the sequent (\ref{eq_seq_BTA2b}) is derivable. 
		\begin{enumerate}
			\item By the induction hypothesis, if (\ref{eq_seq_BTA2b}) is derivable, then $\varphi^{\Sigma_{\alpha^\prime}}_{e^\prime,i+j}(n_1,\dotsc,n_{i+j})$ is true. The fact that this holds for all the triples of the form $c^\prime = \langle \pi(\alpha^\prime),j,e^\prime \rangle$ such that $\alpha^\prime < \alpha$, $c^\prime \in W_e$ and for all the tuples $\langle n_{i+1},\dotsc,n_{i+j} \rangle \in \Nat$ is equivalent to the fact that $\Nat \vDash \varphi^{\Pi_{\alpha}}_{e,i}(n_1,\dotsc,n_i)$ because $\varphi^{\Pi_{\alpha}}_{e,i}(n_1,\dotsc,n_i)$ is a conjunction of formulas of the form $\forall x_{i+1} \dotsc \forall x_{i+j} \; \varphi^{\Sigma_{\alpha^\prime}}_{e^\prime,i+j}(n_1,\dotsc,n_i,x_{i+1}, \dotsc, x_{i+j})$. 
			\item Conversely, if $\Nat \vDash \varphi^{\Pi_{\alpha}}_{e,i}(n_1,\dotsc,n_i)$, then $\varphi^{\Sigma_{\alpha^\prime}}_{e^\prime,i+j}(n_1,\dotsc,n_i,n_{i+1},\dotsc,n_{i+j})$ is true for all triples $c^\prime = \langle \pi(\alpha^\prime),j,e^\prime \rangle$ such that $\alpha^\prime < \alpha$, $c^\prime \in W_e$ and for all $n_{i+1},\dotsc,n_{i+j}$. Besides, for each such triple, $\beta^\prime > \alpha^\prime$ so one can apply the induction hypothesis and conclude that all the sequents of the form (\ref{eq_seq_BTA2b}) are derivable, hence so is (\ref{eq_seq_main}).
		\end{enumerate} 
	\end{proof}
	
	Therefore, we have proved correctness of the construction presented in Lemma \ref{lemma_main}.
	
	\begin{corollary}\label{corollary_SATOO_to_ACTMult}
		$\SATOO$ is one-one reducible to derivability in $\ACTMult$.
	\end{corollary}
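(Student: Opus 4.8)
The plan is to package Lemma~\ref{lemma_main} into a total computable injection $f$ witnessing the one-one reduction. Given an input $\inp \in \Nat$, the reduction first decides whether $\inp$ is (the code of) a pair $\langle \langle X,\pi(\alpha),i,e \rangle, \langle n_1,\dotsc,n_i \rangle \rangle$ with $X \in \{\Sigma,\Pi\}$, $\pi(\alpha) \in \PN$, and a second component of length exactly $i$; call such $\inp$ \emph{admissible}. This is a decidable property of $\inp$. If $\inp$ is not admissible, then $\inp \notin \SATOO$ by definition, and the reduction outputs (a fixed code of) the sequent $b, c^{\inp} \vdash d$, where $b,c,d$ are three distinct primitive formulas chosen once and for all not to occur in the formulas $\Energy_k$. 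This sequent is not derivable in $\ACTMult$: no left or permutation rule applies since its antecedent consists only of primitive formulas, no right rule applies since its succedent $d$ is primitive, and it is not an axiom since $b \ne d$. Moreover $\inp \mapsto (b, c^{\inp} \vdash d)$ is injective on non-admissible inputs.

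If $\inp$ is admissible, the reduction reads off $\alpha$ and computes the Cantor normal form of $\alpha+1$: writing $\pi(\alpha) = \langle g_0,\dotsc,g_N \rangle$, one has $\alpha+1 = \omega^N g_N + \dotsc + \omega^1 g_1 + \omega^0(g_0+1)$, so listing each exponent with multiplicity in non-increasing order gives a finite, effectively computable sequence $\omega > h_1 \ge \dotsc \ge h_M$ with $\omega^{h_1} + \dotsc + \omega^{h_M} = \alpha+1$. The reduction then outputs (a code of) the sequent~(\ref{eq_seq_main}), namely $a_L, a_1^{\inp}, a_X, \energy, \Energy_{h_M}, \dotsc, \Energy_{h_1} \vdash a_L \cdot \okay$. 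Here the formulas $\Energy_\Sigma, \Energy_\Pi, \Energy_0, \Energy_1, \dotsc$ are fixed and independent of $\inp$, being built from $\sr_0, \sr_1$ and $f_0, f_\Sigma, f_\Pi$, which are effectively available by Corollary~\ref{corollary_Church-Turing}; only the length $M$ and the indices $h_1,\dotsc,h_M$ depend on $\inp$, and these were just seen to be computable. Hence $f$ is total and computable.

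Correctness is then immediate from Lemma~\ref{lemma_main}. For admissible $\inp$, since $\omega^{h_1} + \dotsc + \omega^{h_M} = \alpha+1 > \alpha$, part~1 gives that derivability of~(\ref{eq_seq_main}) implies $\inp \in \SATOO$, and part~2 gives the converse; together with the non-admissible case this yields $\inp \in \SATOO$ iff $f(\inp)$ is derivable. For injectivity of $f$: an admissible input is sent to a sequent with succedent $a_L \cdot \okay$ and a non-admissible one to a sequent with primitive succedent, so the two families are disjoint; within the admissible family the antecedent contains exactly $\inp$ occurrences of $a_1$, so distinct admissible inputs give distinct sequents; and injectivity within the non-admissible family was noted above. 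Thus $f$ is a one-one reduction of $\SATOO$ to the derivability problem for $\ACTMult$.

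Essentially all the content is in Lemma~\ref{lemma_main}, so I do not expect a real obstacle here. The only points calling for minor care are the two pieces of book-keeping above: (i) routing non-admissible inputs through an explicitly injective family of underivable sequents and tagging admissible outputs by the distinctive succedent $a_L \cdot \okay$, which is what upgrades the reduction from many-one to one-one; and (ii) checking that passing from the polynomial notation $\pi(\alpha)$ to a non-increasing list of Cantor-normal-form exponents of $\alpha+1$ — and in particular arranging the hypothesis $\omega^{h_1}+\dotsc+\omega^{h_M} > \alpha$ of Lemma~\ref{lemma_main} — is a purely arithmetical manipulation of the encoding tuple.
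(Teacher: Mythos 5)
Your proof is correct and follows essentially the same route as the paper: map $\inp$ to the sequent~(\ref{eq_seq_main}) with $\omega^{h_1}+\dotsc+\omega^{h_M}=\alpha+1$ and invoke Lemma~\ref{lemma_main}, noting injectivity via the count of $a_1$'s. The only (immaterial) difference is the treatment of ill-formed inputs: the paper simply takes $M=0$ so that the same sequent shape is underivable by the base case of Lemma~\ref{lemma_main}, whereas you route them through a separate family $b, c^{\inp} \vdash d$.
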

	\begin{proof}
		The reduction is as follows: 
		$$
		\inp \mapsto \mathrm{seq}(\inp) \eqdef a_L, a_1^{\inp}, a_X, \energy, \Energy_{h_M}, \dotsc, \Energy_{h_1} \vdash a_L \cdot \okay.
		$$
		Here $\omega^{h_1} + \dotsc + \omega^{h_M} = \alpha + 1$ if $\inp = \langle \langle X,\pi(\alpha),i,e \rangle,\langle n_1,\dotsc,n_i \rangle\rangle$ (in order to make the reduction total assume that otherwise $M = 0$). Then, according to Lemma \ref{lemma_main}, $\inp \in \SATOO$ if and only if $\mathrm{seq}(\inp)$ is derivable. Note that $\mathrm{seq}$ is injective.
	\end{proof}
	
	\subsection{Reduction of Derivability in $\ACTMult$ to $\SATOO$}\label{ssec_ACTMult_to_SATOO}
	
	What remains is to show that the derivability problem for $\ACTMult$ is one-one reducible to $\SATOO$. We shall construct a computable function $F :\PN \to \Nat$ such that $F(p)$ is an index of a formula $\Der_p(x_1)$ with the following property. If $\rho(\Gamma \vdash C) \le \pi^{-1}(p)$, then $\Der_p(\ulcorner \Gamma \vdash C \urcorner)$ is true if and only if $\Gamma \vdash C$ is derivable in $\ACTMult$. Recall that $\ulcorner \Gamma \vdash C \urcorner$ is a G\"odel number of $\Gamma \vdash C$; we assume that $0$ is not a G\"odel number of any sequent. Hereinafter, we consider the logic $\ACTMult$ including constants, $/$, and $\vee$.
	
	Assume that we know that the result of some rule application is $A_1, \dotsc,A_M \vdash A_0$ and we would like to know how this rule application could look like. It can be described by a number $m \in \{0,\dotsc,M\}$ which says that the formula $A_m$ is principal (this also gives one information about which rule is applied) and by an additional parameter $l$, whose meaning depends on the rule applied. E.g. if the rule is $(\bs L)$ (as defined in Section \ref{ssec_ACTMult}), then $l \eqdef \vert \Pi \vert$; if the rule is $(\cdot R)$, then $l \eqdef \vert \Gamma \vert$; if the rule is $(\bang L_n)$, then $l \eqdef n$; etc. Thus the single number $t = \langle m,l \rangle$ defines a rule application.
	\begin{example}
		Assume that the sequent $p, p \bs q, q \bs r, r \bs s \vdash s$ is known to be the result of some rule application. If $t = \langle 3, 2 \rangle = 2^3 \cdot 3^2$, then this rule application looks as follows:
		$$
		\infer[(\bs L)]
		{
			p, p \bs q, q \bs r, r \bs s \vdash s
		}
		{
			r, r \bs s \vdash s
			&
			p, p \bs q \vdash q
		}
		$$
		
	\end{example}
	To handle permutation rules $(\nabla P_i)$ one should consider generalized rules; then, the parameter $l$ must also contain information about whether a formula of the form $\nabla B$ moves to the left or to the right and how many formulas it jumps over.
	
	Finally, if $t \in \Nat$ defines a rule application with the conclusion $A_1, \dotsc,A_M \vdash A_0$, then let $\Premise(\ulcorner A_1, \dotsc,A_M \vdash A_0 \urcorner,t,k)$ be the code for the $(k+1)$-th premise of this rule application (for $k \in \Nat$). If $(k+1)$ is greater that the total number of premises, then let $\Premise(\ulcorner A_1, \dotsc,A_M \vdash A_0 \urcorner,t,k) \eqdef \ulcorner \mathbf{1} \vdash \mathbf{1} \urcorner$. If $c$ is not a G\"odel number of a sequent or $t$ does not define a rule application, then let $\Premise(c,t,k) \eqdef 0$. Clearly, the function $\Premise$ is computable.
	Finally, let us recursively define $\Der_{p}(x_1)$ as follows:
	$$
	\Der_{p}(x_1) \eqdef \Axiom(x_1) \vee \bigdoublevee\limits_{n,t \in \Nat} \left(\left(x_1 = {n}\right) \wedge \bigdoublewedge\limits_{k \in \Nat} \bigdoublevee\limits_{p^\prime \prec p}  \Der_{p^\prime}\left({\Premise(n,t,k)}\right)\right).
	$$
	Here $\Axiom(x_1)$ is a $\Sigma_1$-formula saying that $x_1$ is a G\"odel number of an axiom of $\ACTMult$.
	
	\begin{proposition}\label{prop_Der}
		$\Der_{\pi(\alpha)}(x_1)$ is a $\Sigma_{\alpha \cdot 2 + 1}$-formula.
	\end{proposition}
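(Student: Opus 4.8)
The plan is to produce $F$ by effective transfinite recursion and then bound the rank of $\Der_{\pi(\alpha)}$ by transfinite induction on $\alpha$.

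First I would make the recursive functional $\Psi$ explicit. Given $p\in\PN$ and an index $\hat e$ such that $\Phi_{\hat e}(p')$ is an index of a formula $\Der_{p'}(x_1)$ for every $p'\prec p$, the functional $\Psi(p,\hat e)$ assembles an index for the displayed formula. The only layer needing care is the inner disjunction $\bigdoublevee_{p'\prec p}\Der_{p'}(\Premise(n,t,k))$: it is exactly an instance of the canonical $\Sigma$-construction of Definition~\ref{def_computable_infinitary_formulas}, with the governing c.e.\ set enumerating (uniformly in $n,t,k$) the triples $(\pi(\beta),0,e')$ for $\beta<\alpha$, where $e'$ is an index for the $\Pi_{\gamma+1}$-form of the formula obtained from $\Phi_{\hat e}(\pi(\beta))$ via the substitution operator $\mathrm{sub}$ (with $\Premise$ computing the argument) and the effective $\Sigma_\gamma\rightsquigarrow\Pi_{\gamma+1}$ conversion of Remark~\ref{remark_manipulating_formulas}. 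One then conjoins over $k$, conjoins the atomic formula $x_1=n$, disjoins over $n,t$, and disjoins $\Axiom(x_1)$ --- all effectively by Remark~\ref{remark_manipulating_formulas}. The recursion theorem gives $\bar e$ with $\Phi_{\bar e}(p)=\Psi(p,\bar e\restriction_{\prec p})$; set $F:=\Phi_{\bar e}$, and let $\Der_p(x_1)$ be the formula indexed by $F(p)$.

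For the bound I would induct on $\alpha$, using the standard effective closure facts: a computable disjunction of $\Sigma_\gamma$-formulas is $\Sigma_\gamma$; a computable conjunction of $\Sigma_\gamma$-formulas is $\Pi_{\gamma+1}$; a computable disjunction of $\Pi_\gamma$-formulas is $\Sigma_{\gamma+1}$; $\Sigma_\gamma$ and $\Pi_\gamma$ are contained in $\Sigma_{\gamma'}\cap\Pi_{\gamma'}$ for $\gamma<\gamma'$; and conjoining or disjoining a quantifier-free formula (both $\Sigma_0$ and $\Pi_0$) does not raise the level. The base case is immediate: $\pi(0)$ is $\prec$-least, so $\bigdoublevee_{p'\prec\pi(0)}$ is the empty (false) disjunction and $\Der_{\pi(0)}(x_1)\equiv\Axiom(x_1)$, which is $\Sigma_1=\Sigma_{0\cdot2+1}$. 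In the step, walking up the layers $\bigdoublevee_{p'\prec\pi(\alpha)}\rightsquigarrow\bigdoublewedge_k\rightsquigarrow(x_1=n)\wedge(\cdot)\rightsquigarrow\bigdoublevee_{n,t}\rightsquigarrow\Axiom(x_1)\vee(\cdot)$ turns a formula of level $\gamma$ sitting inside the first disjunction into one of level $\gamma+2$ at the top. So if $\Der_{\pi(\alpha)}\in\Sigma_{\mu(\alpha)}$, then $\mu$ obeys $\mu(0)\le1$, $\mu(\alpha+1)\le\mu(\alpha)+2$, and $\mu(\lambda)\le\bigl(\sup_{\beta<\lambda}\mu(\beta)\bigr)+2$ for limit $\lambda$.

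The main obstacle is the ordinal bookkeeping at limit stages: the naive invariant ``$\mu(\beta)\le\beta\cdot2+1$'' is \emph{not} self-sustaining, since for a limit $\alpha$ whose polynomial notation has at least two nonzero entries (e.g.\ $\alpha=\omega^2+\omega$) one has $\sup_{\beta<\alpha}(\beta\cdot2+1)=\alpha\cdot2$, whence the step yields only $\mu(\alpha)\le\alpha\cdot2+2$. The point is that on a fixed numeral $\Axiom$ and $x_1=n$ become decidable, so the inner disjuncts really sit at the genuine levels $\mu(\beta)$ rather than at $\beta\cdot2+1$; solving the recurrence for $\mu$ above gives the self-sustaining bound $\mu(n)\le2n+1$ for finite $n$ and $\mu(\lambda+n)\le\lambda+2(n+1)$ for a nonzero limit $\lambda$ and $n<\omega$ (at a limit $\lambda$, since each $\mu(\beta)\le\beta+\omega$ one gets $\sup_{\beta<\lambda}\mu(\beta)=\lambda$, hence $\mu(\lambda)\le\lambda+2$). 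Finally $\mu(\alpha)\le\alpha\cdot2+1$: for finite $\alpha$ this is $2\alpha+1$, and for $\alpha=\lambda+n$ infinite it follows from $\lambda+2(n+1)<\lambda\cdot2\le\lambda\cdot2+n+1=(\lambda+n)\cdot2+1$. Thus $\Der_{\pi(\alpha)}$ is a $\Sigma_{\alpha\cdot2+1}$-formula.
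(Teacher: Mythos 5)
Your proposal is correct and builds the index function $F$ by effective transfinite recursion exactly as the paper does; the genuine difference is in the rank bookkeeping, and there your extra care is not optional but necessary. The paper's own proof assembles the three layers at levels $\alpha\cdot 2+1$, $\alpha\cdot 2+2$ and $\alpha\cdot 2+3$, so read literally it only exhibits $\Der_{\pi(\alpha)}$ as a $\Sigma_{\alpha\cdot 2+3}$-formula; moreover, its claim that the inner disjunction $\bigdoublevee_{p'\prec\pi(\alpha)}\Der_{p'}(\cdot)$ is $\Sigma_{\alpha\cdot 2+1}$ does not follow from the invariant ``$\Der_{\pi(\beta)}$ is $\Sigma_{\beta\cdot 2+1}$'' alone. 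You correctly diagnose this: the naive invariant is not preserved under the recurrence $\mu(\alpha+1)\le\mu(\alpha)+2$, $\mu(\lambda)\le\sup_{\beta<\lambda}\mu(\beta)+2$ at limits such as $\omega^2+\omega$, and the same failure occurs at infinite successors, where $(\gamma+1)\cdot 2+1=\gamma\cdot 2+2<\gamma\cdot 2+3$. Your sharper invariant $\mu(n)\le 2n+1$ and $\mu(\lambda+n)\le\lambda+2(n+1)$ does solve the recurrence (your closure facts for computable disjunctions and conjunctions, and the observation that $\mu(\beta)<\beta+\omega$ forces $\sup_{\beta<\lambda}\mu(\beta)=\lambda$ at limits, are all standard and effective), and the final comparison $\lambda+2(n+1)<\lambda\cdot 2\le(\lambda+n)\cdot 2+1$ correctly recovers the stated bound $\Sigma_{\alpha\cdot 2+1}$. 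For the paper's downstream uses only computability of the bound from $\alpha$ and its staying below $\omega^{k+1}$ (resp.\ $\omega^\omega$) matter, so the finite offset is harmless there; but as a proof of the proposition as literally stated, yours is the one that actually closes the induction. One cosmetic slip: in your description of the governing c.e.\ set for the inner disjunction, the first coordinate of each triple should be the notation for the level of the $\Pi$-form of the disjunct (roughly $\pi(\mu(\beta)+1)$) rather than $\pi(\beta)$; the paper commits the same abuse, and it does not affect the argument.
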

	
	\begin{proof}
		Our goal is to find the computable function $F: \PN \to \Nat$ such that $F(\pi(\alpha))$ is an index of $\Der_{\pi(\alpha)}(x_1)$ as a $\Sigma_{\alpha\cdot 2 +1}$-formula. We shall use effective transfinite recursion. Firstly, $\Der_{\pi(0)}(x_1) = \Axiom(x_1)$ is $\Sigma_1$; let $F(\pi(0)) = \mathit{ax} = (\Sigma, \pi(1),1,e)$ be its index.
		
		To prove the induction step, assume that we know the index $e^{[<\alpha]}$ of the computable function $F^{[<\alpha]}$ such that $F^{[<\alpha]}(p^\prime)$ equals the index of the formula $\Der_{p^\prime}(x_1)$ if $p^\prime \prec \pi(\alpha)$, while otherwise it is undefined. Let $\chi_n$ be the index of the $\Sigma_0$-formula $(x_1 = n)$.
		\begin{enumerate}
			\item For $p^\prime \prec \pi(\alpha)$, given the index of $\Der_{p^\prime}(x_1)$ and $c \in \Nat$, one can use the function $\mathrm{sub}$ (Remark \ref{remark_manipulating_formulas}) to compute the index of $\Der_{p^\prime}\left({c}\right)$, which is a $\Sigma_{\alpha^\prime \cdot 2+1}$-formula for $\alpha^\prime = \pi^{-1}(p^\prime) < \alpha$. Therefore, the set of indices of formulas $\Der_{p^\prime}\left({\Premise(n,t,k)}\right)$ for $p^\prime \prec \pi(\alpha)$ is computable. Moreover, given $n$, $t$ and $k$, one can compute $e_1(n,t,k)$ such that $W_{e_1(n,t,k)}$ is the set of triples $(p^\prime, 0, e^\prime)$ where $e^\prime$ is the index of $\Der_{p^\prime}\left({\Premise(n,t,k)}\right)$. Consequently, $\bigdoublevee\limits_{p^\prime \prec \pi(\alpha)}  \Der_{p^\prime}\left({\Premise(n,t,k)}\right)$ is a $\Sigma_{\alpha\cdot 2 + 1}$-formula; its index is $I_1(n,t,k) = \langle \Sigma, \pi(\alpha \cdot 2 + 1), 0, e_1(n,t,k) \rangle$.
			\item Similarly, the function $e_2(n,t)$ is computable such that 
			$$
			W_{e_2(n,t)} = \{\langle \pi(0),0,\chi_n \rangle \} \cup \{\langle \pi(\alpha \cdot 2 + 1), 0, I_1(n,t,k) \rangle \mid k \in \Nat\}.
			$$
			Therefore, the formula $(x_1 = n) \wedge \bigdoublewedge\limits_{k \in \Nat} 
			\varphi^{\Sigma_{\alpha \cdot 2 + 1}}_{I_1(n,t,k),0}$ is $\Pi_{\alpha \cdot 2 + 2}$, and one can compute its index $I_2(n,t)$.
			\item Finally, one can find $e_3$ such that 
			$$
			W_{e_3} = \{\langle \pi(1), 0, ax \rangle \} \cup \{\langle \pi(\alpha \cdot 2 + 2),0,I_2(n,t) \rangle \mid n,t \in \Nat \}.
			$$
			Thus $\langle \Sigma, \pi(\alpha \cdot 2 + 3), 1, e_3\rangle$ is the index of $\Axiom(x_1) \vee \bigdoublevee\limits_{n,t \in \Nat} \varphi^{\Pi_{\alpha \cdot 2 + 2}}_{I_2(n,t),1}$ as desired.
		\end{enumerate}
		Summarizing the above reasonings, given $p = \pi(\alpha)$ and the index $e^{[<\alpha]}$, we can compute $e_3$. Let $\Psi(p,e^{[<\alpha]})$ be the index $e^{[\alpha]}$ of the function $F^{[\alpha]}$ such that $F^{[\alpha]}(p^\prime) = F^{[<\alpha]}(p^\prime)$ for $p^\prime \prec p$ and $F^{[\alpha]}(p) = e_3$ (otherwise, it is undefined). Applying effective transfinite recursion \cite[Theorem I.33]{Montalban23} we conclude that there is $e$ such that $\Phi_e(\pi(\alpha)) = \Psi(\pi(\alpha),e^{[<\alpha]})$ for all $\alpha < \omega^\omega$. It is not hard to see that $F(\pi(\alpha)) = \Phi_e(\pi(\alpha))$ is the index of $\Der_{\pi(\alpha)}(x_1)$ of the required rank.
	\end{proof}
	
	\begin{corollary}\label{corollary_ACTMult_to_SATOO}
		The derivability problem for $\ACTMult$ is one-one reducible to $\SATOO$.
	\end{corollary}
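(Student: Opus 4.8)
The plan is to build the reduction around the formula $\Der_p$ and the function $F$ supplied by Proposition~\ref{prop_Der}. Given a number $c$, first test whether $c = \ulcorner \Gamma \vdash C\urcorner$ for some sequent; if so, compute $\alpha = \rho(\Gamma \vdash C)$ --- the ranking function is plainly computable on codes of sequents, and $\alpha < \omega^\omega$ --- put $p = \pi(\alpha)$, and use $F$ to obtain an index of $\Der_{p}(x_1)$ as a $\Sigma_{\alpha\cdot 2+1}$-formula. Since $\alpha \cdot 2 + 1 < \omega^\omega$ whenever $\alpha < \omega^\omega$, this index is a legitimate index in the sense of Definition~\ref{def_computable_infinitary_formulas}. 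Applying $\mathrm{sub}$ (Remark~\ref{remark_manipulating_formulas}) we compute an index of the sentence $\Der_{p}(c)$ obtained by substituting the numeral for $c$ for $x_1$, and send $c$ to the pair consisting of this index and the empty assignment; on inputs that are not codes of sequents we output a fixed pair coding a false sentence. The desired equivalence --- $c$ codes a derivable sequent iff the output lies in $\SATOO$ --- then reduces to the property that $\Der_p$ computes derivability at level $p$.

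That property is the substantive point, and I would establish it by transfinite induction on $\pi^{-1}(p)$, proving two clauses together: (i) for every $c$, if $\Der_p(c)$ is true then $c$ codes a sequent derivable in $\ACTMult$; and (ii) for every sequent $\Gamma\vdash C$ with $\rho(\Gamma\vdash C)\le\pi^{-1}(p)$, if $\Gamma\vdash C$ is derivable then $\Der_p(\ulcorner\Gamma\vdash C\urcorner)$ is true. For (i): if $\Axiom(c)$ holds then $c$ codes an axiom; otherwise $c$ codes a sequent $S$ for which some $t$ encodes a (generalized) rule application with conclusion $S$, and for each $k$ the premise coded by $\Premise(c,t,k)$ satisfies $\Der_{p'}$ for some $p'\prec p$, hence is derivable by the induction hypothesis --- here one uses that $\ulcorner\mathbf{1}\vdash\mathbf{1}\urcorner$ codes an axiom and that $\Premise(c,t,k)=0$ can never satisfy any $\Der_{p'}$ (itself proved by transfinite induction, since $0$ is not a code of a sequent); applying rule $t$ then gives a derivation of $S$. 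For (ii): fix a derivation of $\Gamma\vdash C$ in the generalized-rule presentation of $\ACTMult$ (Remark~\ref{remark_rank_increases}); its last rule application is encoded by some $t$, and by Remark~\ref{remark_rank_increases} every premise $\Gamma_k\vdash C_k$ has $\rho(\Gamma_k\vdash C_k)<\rho(\Gamma\vdash C)\le\pi^{-1}(p)$, so $\pi(\rho(\Gamma_k\vdash C_k))\prec p$ and the induction hypothesis yields $\Der_{\pi(\rho(\Gamma_k\vdash C_k))}(\ulcorner\Gamma_k\vdash C_k\urcorner)$, witnessing the disjunct of $\Der_p(c)$ indexed by $(c,t)$.

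The main obstacle is precisely this argument, and within it the need to work with generalized rules rather than the calculus as originally presented: the permutation rules $(\nabla P_i)$ do not decrease rank, so the recursion defining $\Der_p$ would not be well-founded along $\prec$; Remark~\ref{remark_rank_increases} repairs this by absorbing each $(\nabla P_i)$ into the preceding rule. A secondary technical burden is checking that the encoding $t=\langle m,l\rangle$ of rule applications and the function $\Premise$ are fine enough that every generalized rule application is recovered --- I would simply observe that such an application with a given conclusion is determined by which occurrence is principal together with finitely much further data (the auxiliary parameter of the rule, and the direction and length of the trailing permutation), so an adequate encoding exists.

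It remains to make the reduction injective. The map above is many-one; to upgrade it, note that $\SATOO$ is a cylinder --- given an index of a $\Sigma_\alpha$- or $\Pi_\alpha$-formula and an arbitrary parameter one can, by Remark~\ref{remark_manipulating_formulas}, compute an index of a logically equivalent formula depending injectively on that parameter (for instance by conjoining with a true atomic sentence built from the parameter) --- so any many-one reduction to $\SATOO$ yields a one-one one; equivalently, one may choose $\mathrm{sub}$ and $F$ so that $c\mapsto\mathrm{sub}(\langle F(\pi(\rho(S_c))),\langle c\rangle\rangle)$ is injective and tag the non-sequent branch so the two branches never meet. This produces a computable injection witnessing that the derivability problem for $\ACTMult$ is one-one reducible to $\SATOO$.
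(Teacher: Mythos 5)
Your proposal is correct and follows essentially the same route as the paper: it takes $F$ and $\Der_p$ from Proposition~\ref{prop_Der}, maps a sequent to a $\SATOO$-instance about $\Der_{\pi(\rho(\Pi\vdash C))}$ at its G\"odel number, and carries out the transfinite induction (using generalized rules and the strict rank decrease of Remark~\ref{remark_rank_increases}) that the paper leaves as a one-line remark. The only difference is cosmetic: the paper passes the G\"odel number as the assignment, mapping $\Pi\vdash C$ to $\langle F(\pi(\rho(\Pi\vdash C))),\langle\ulcorner\Pi\vdash C\urcorner\rangle\rangle$ rather than substituting it via $\mathrm{sub}$, so injectivity is immediate and your cylinder/padding argument for upgrading the many-one reduction to a one-one reduction is not needed.
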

	\begin{proof}
		Take the function $F$ from the proof of Proposition \ref{prop_Der}. Given a sequent $\Pi \vdash C$, one can compute $r = \pi(\rho(\Pi \vdash C))$. Then $\langle F(r), \langle \ulcorner \Pi \vdash C \urcorner \rangle \rangle \in \SATOO$ if and only if $\Pi \vdash C$ is derivable in $\ACTMult$ (this can be proved by transfinite induction on the rank of $\Pi \vdash C$). The function that maps $\Pi \vdash C$ to $\langle F(r), \langle \ulcorner \Pi \vdash C \urcorner \rangle \rangle$ is injective.
	\end{proof}
	
	This completes the proof of Theorem \ref{th_main_1-equivalence}.
	
	\begin{proof}[Proof of Theorem \ref{th_main_1-equivalence}]
		Follows from Corollary \ref{corollary_SATOO_to_ACTMult} and Corollary \ref{corollary_ACTMult_to_SATOO} (and also from Myhill's isomorphism theorem \cite[\S7.4]{Rogers67}).
	\end{proof}
	
	\subsection{Closure Ordinal for $\ACTMult$}\label{ssec_closure_ordinal}
	
	Closure ordinal is a proof-theoretic complexity measure of a logic used in \cite{KuznetsovS22,KuznetsovS23} to study $\ACT$ with subexponentials and $\ACTMult$. The closure ordinal for a sequent calculus $\Logic$ is defined as follows. Let $\mathcal{D}_{\Logic}$ be a function that maps a set of sequents $S$ to the set $\mathcal{D}_{\Logic}(S)$, which is the union of $S$ with the set of sequents that can be obtained from elements of $S$ by one application of a rule of $\Logic$. The operator $\mathcal{D}_{\Logic}$ is called the \emph{immediate derivability operator for $\Logic$}. Let $T_{\Logic}(0) = \emptyset$, let $T_{\Logic}(\alpha+1) = \mathcal{D}_{\Logic}(T_{\Logic}(\alpha))$ and let $T_{\Logic}(\kappa) = \cup_{\alpha < \kappa} T_{\Logic}(\alpha)$ for $\kappa$ being limit. The closure ordinal for $\mathcal{D}_{\Logic}$ is the least $\alpha_0$ such that $T_{\Logic}(\alpha_0) = T_{\Logic}(\alpha_0+1)$. Clearly, $T_{\Logic}(\alpha_0)$ is the set of all sequents derivable in $\Logic$.
	
	The closure ordinal of $\Logic$ is related to the complexity of $\Logic$. Namely, it is proved in \cite[Theorem 6.2]{KuznetsovS23} that $T_{\Logic}(\alpha)$ is m-redicuble to $H(\alpha)^\prime$ (note that there Kleene's $\mathcal{O}$ is used). In what follows, the set $T_{\Logic}(\alpha_0)$ of derivable sequents is in $\Sigma^0_{\alpha_0}$. In \cite{KuznetsovS23}, the authors prove that the closure ordinal for $\ACTMult$ is not greater than $\omega^\omega$, thus the derivability problem for it has the upper bound $\Sigma^0_{\omega^\omega}$. Note that we have also proved this result in Corollary \ref{corollary_ACTMult_to_SATOO} using different methods. Finding the exact value of the closure ordinal for $\ACTMult$ was left as an open problem in \cite{KuznetsovS23}. Using Theorem \ref{th_main_complexity} we can easily solve it.
	
	\begin{corollary}
		The closure ordinal for $\ACTMult$ equals $\omega^\omega$.
	\end{corollary}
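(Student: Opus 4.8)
The plan is to pair the upper bound already in the literature with a matching lower bound extracted from Theorem \ref{th_main_complexity}. Write $\alpha_0$ for the closure ordinal of $\ACTMult$. The inequality $\alpha_0 \le \omega^\omega$ is established in \cite{KuznetsovS23}, so the whole task reduces to proving $\alpha_0 \ge \omega^\omega$. I would do this by contradiction, assuming $\alpha_0 < \omega^\omega$.

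Under that assumption, $T_{\ACTMult}(\alpha_0)$ is, by the definition of the closure ordinal, precisely the set of sequents derivable in $\ACTMult$. By \cite[Theorem 6.2]{KuznetsovS23}, $T_{\ACTMult}(\alpha) \le_m H(\alpha)^\prime = H(\alpha+1)$ for every $\alpha$, so the derivability problem for $\ACTMult$ is m-reducible, hence Turing-reducible, to $H(\alpha_0+1)$. Since $\omega^\omega$ is a limit ordinal we have $\alpha_0+1 < \omega^\omega$, and the jump hierarchy $\big(H(\beta)\big)_{\beta \le \omega^\omega}$ along the computable well-ordering $(\PN,\prec)$ is strictly increasing under $\le_T$, so $H(\alpha_0+1) <_T H(\omega^\omega)$. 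On the other hand, Theorem \ref{th_main_complexity} (equivalently Corollary \ref{corollary_main_m-reducibility}) gives $H(\omega^\omega) \le_T$ (derivability in $\ACTMult$). Chaining the reductions yields $H(\omega^\omega) \le_T H(\alpha_0+1)$, contradicting $H(\omega^\omega) \not\le_T H(\alpha_0+1)$. Hence $\alpha_0 \ge \omega^\omega$, and combined with the upper bound this gives $\alpha_0 = \omega^\omega$.

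I do not expect any genuine obstacle: the substantive work is done in Theorem \ref{th_main_complexity}, and what remains is bookkeeping of reductions. Two points warrant a word of care. First, \cite[Theorem 6.2]{KuznetsovS23} is stated in Kleene's $\mathcal{O}$, so one invokes the Turing-equivalence (noted in Section \ref{ssec_hyperarithmetic}) of that $H(\cdot)$ with the $H(\cdot)$ defined via the polynomial notation. Second, strictness of the hyperarithmetical jump hierarchy below $\omega^\omega$ is needed; this is standard and follows from $H(\beta) <_T H(\beta)^\prime \le_T H(\beta+1) \le_T H(\gamma)$ for all $\beta < \gamma \le \omega^\omega$, where the last inequality is immediate from the limit clause in the definition of $H$ (with a routine induction through successor stages).
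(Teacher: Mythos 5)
Your proposal is correct and is essentially the paper's own argument: the paper also takes the upper bound $\alpha_0 \le \omega^\omega$ from \cite{KuznetsovS23} and derives a contradiction from $\alpha_0 < \omega^\omega$, since then derivability would lie in $\Sigma^0_{\alpha_0}$ (via \cite[Theorem 6.2]{KuznetsovS23}) while being $\Delta^0_{\omega^\omega}$-complete under Turing reductions. You merely spell out the bookkeeping (the jump-hierarchy strictness and the notation translation) that the paper leaves implicit.
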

	
	Indeed, if the closure ordinal $\alpha_0$ for $\ACTMult$ was less than $\omega^\omega$, then the derivability problem for $\ACTMult$ would be in $\Sigma^0_{\alpha_0}$ while being $\Delta^0_{\omega^\omega}$-complete.
	
	Note that, even for $\ACTMult$ with the $(\mathrm{cut})$ rule, its closure ordinal equals $\omega^\omega$ due to the same argument. Proving this result for $\ACTMult+(\mathrm{cut})$ directly by analysing proofs in this calculus is probably a hard challenge.
	
	\subsection{Modification of the Construction for $\ACTMult^-$}\label{ssec_ACTMult_minus}
	
	In \cite{Kuznetsov21}, a fragment of infinitary action logic with the exponential is studied where a formula with Kleene star cannot be a subformula of a formula of the form $\bang A$. For example, $\bang (p \wedge q^\ast)$ is not allowed although $\bang p \wedge q^\ast$ and $(\bang p \wedge q)^\ast$ are well-formed formulas. It is proved in \cite{Kuznetsov21} that the derivability problem for that fragment is in $\Delta^1_1$ while derivability for the full logic with the exponential is $\Pi^1_1$-complete; thus, the restriction on Kleene star changes the complexity of infinitary action logic with the exponential. 
	
	One might be interested in whether this is also the case for $\ACTMult$ where one has the multiplexing modality instead of the exponential as in \cite{Kuznetsov21}. Let us denote the fragment of $\ACTMult$ where Kleene star is not allowed to be a subformula of a formula of the form $\bang A$ by $\ACTMult^-$. Note that the formula $\Energy_0$ contains both $\bang$ and $^\ast$ and $\Energy_n$ is defined using nested $\bang$-s; therefore, $\Energy_n$ is not a formula of $\ACTMult^-$. 
	
	It turns out that there is a simple modification of the main construction from Section \ref{ssec_construction} that belongs to $\ACTMult^-$. Let us present it. 
	\begin{definition}
		Let us define the formulas $\HEnergy_k$ and $\Killer$ as follows:
		\begin{enumerate}
			\item $\HEnergy_0 = \Energy_0$; $\HEnergy_{k+1} = \OKAY \wedge [ \energy ]^? \HEnergy_k^\ast $.
			
			\item $\Killer = \energy \bs \left((a_\Sigma \bs a_1^\ast \bs \okay) \wedge (a_\Pi \bs a_1^\ast \bs \okay)\right)$.
		\end{enumerate}
	\end{definition}
	Note that $\Energy_0$ is defined in the same way as in Definition \ref{def_Energy}. Now, we would like to prove the following lemma analogous to Lemma \ref{lemma_main}:
	\begin{lemma}\label{lemma_main_no_Kleene_under_bang}
		Let $\omega > h_1 \ge \dotsc \ge h_M$ and $\inp \in \Nat$. Consider the sequent
		\begin{equation}\label{eq_seq_main_no_Kleene_under_bang}
			a_L, a_1^{\inp}, a_X, \energy, \HEnergy_{h_M}, \dotsc, \HEnergy_{h_1}, \Killer \vdash a_L \cdot \okay.
		\end{equation}
		Let $\inp = \langle \langle X,\pi(\alpha),i,e \rangle,\langle n_1,\dotsc,n_i \rangle\rangle$.
		\begin{enumerate}
			\item If (\ref{eq_seq_main_no_Kleene_under_bang}) is derivable and $\omega^{h_1}+\dotsc+\omega^{h_M} > \alpha$, then $\inp \in \SATOO$.
			\item If $\inp \in \SATOO$, then (\ref{eq_seq_main_no_Kleene_under_bang}) is derivable.
		\end{enumerate}
	\end{lemma}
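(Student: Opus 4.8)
The plan is to prove both statements simultaneously by transfinite induction on $\beta = \omega^{h_1}+\dots+\omega^{h_M}$, following the proof of Lemma~\ref{lemma_main} almost word for word and only tracking two structural differences: the formulas $\HEnergy_{k+1}$ use a Kleene star where $\Energy_{k+1}$ used the subexponential $\bang$, and the antecedent now drags the extra locked formula $\Killer = \energy \bs ((a_\Sigma \bs a_1^\ast \bs \okay)\wedge(a_\Pi \bs a_1^\ast \bs \okay))$ along at its far right. First I would dispose of the base case $M=0$, where the sequent is $a_L, a_1^{\inp}, a_X, \energy, \Killer \vdash a_L \cdot \okay$: here $\energy$ sits immediately to the left of $\Killer$, so the bottom-top analysis (Lemma~\ref{lemma_bottom-top}, items~\ref{item_bottom-top_bs} and~\ref{item_bottom-top_wedge}, together with reversibility of $(\ast L_\omega)$ from Corollary~\ref{corollary_reversible_rules}) reduces it to $a_L, a_1^{\inp}, a_1^\ast \bs \okay \vdash a_L \cdot \okay$ and then, via $(\ast R_{\inp})$ applied to $a_1^{\inp}\vdash a_1^\ast$, to the derivable $a_L, \okay \vdash a_L \cdot \okay$; statement~1 holds vacuously since $0\not> \alpha$.

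For the inductive step, Case $h_M>0$ (so $\beta$ is a limit): here $\HEnergy_{h_M} = \OKAY \wedge [\energy]^? \HEnergy_{h_M-1}^\ast$, and the bottom-top analysis runs exactly as in Case~1 of Lemma~\ref{lemma_main} up to the point where that proof invoked item~\ref{item_bottom-top_bang} to pass to ``$\Energy_{h_M-1}^{l}$ for some $l$''. I would instead invoke item~\ref{item_bottom-top_ast}, concluding that (\ref{eq_seq_main_no_Kleene_under_bang}) is derivable iff for \emph{every} $n\in\Nat$ the sequent $a_L, a_1^{\inp}, a_X, \energy, \HEnergy_{h_M-1}^{n}, \HEnergy_{h_{M-1}}, \dots, \HEnergy_{h_1}, \Killer \vdash a_L \cdot \okay$ is derivable, the $n$-th one having energy $\beta'_n = \omega^{h_1}+\dots+\omega^{h_{M-1}}+\omega^{h_M-1}\cdot n < \beta$. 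Then the induction hypothesis closes both directions: if (\ref{eq_seq_main_no_Kleene_under_bang}) is derivable and $\beta>\alpha$, pick $n$ with $\beta'_n>\alpha$ (possible since $\sup_n\beta'_n=\beta$) and apply statement~1 of the hypothesis to the derivable $n$-th sequent; conversely, if $\inp\in\SATOO$ then statement~2 of the hypothesis — which carries \emph{no} lower bound on the energy — makes every $n$-th sequent derivable, hence (\ref{eq_seq_main_no_Kleene_under_bang}) as well. This is exactly why the ``$>\alpha$'' side condition migrates from statement~2 of Lemma~\ref{lemma_main} to statement~1 here: with $\bang$ one needs only one copy, matching a one-sided existential bound, whereas with $^\ast$ one needs all copies, which is compatible only with an unconditional statement~2.

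Case $h_M=0$: now $\HEnergy_0=\Energy_0$ verbatim, so the opening of the bottom-top analysis is literally Case~2 of Lemma~\ref{lemma_main}, the sole change being that every sequent in the chain drags $\Killer$ along at its right end; since $\Killer$ is locked and the primitive formulas $\energy,\go,\wait,\final$ occurring in it cannot act as ``keys'' at the positions that matter, none of the cited reductions (Lemma~\ref{lemma_bottom-top}, Lemma~\ref{lemma_Technical}, Lemma~\ref{lemma_rightarrow}, Corollary~\ref{corollary_wedge}) is disturbed, so they remain \emph{iff}'s. I would then split on $X$ and on $\alpha$ as there: for $\alpha>0$, the $\sr_0$-step, the $[\go]^\rightarrow a_2$ (resp.\ $[\go]^\rightarrow a_2^\ast$) step, and the $\sr_1$-step go through as in subcases A/B of Lemma~\ref{lemma_main}, and when $\sr_1$ locates a legitimate sub-instance one reaches a sequent of the shape $a_L, a_1^{\inp'}, a_{Y}, \energy, \HEnergy_{h_{M-1}}, \dots, \HEnergy_{h_1}, \Killer \vdash a_L \cdot \okay$ with $\rho$-parameter $\alpha'<\alpha$ and energy $\beta'=\omega^{h_1}+\dots+\omega^{h_{M-1}}<\beta$, to which the induction hypothesis applies; the truth-analysis of $\varphi^{\Sigma_\alpha}_{e,i}$ and $\varphi^{\Pi_\alpha}_{e,i}$ is then copied from Lemma~\ref{lemma_main}. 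The genuinely new ingredient is the treatment of the ``give-up'' transitions: whenever the simulated computation has used up all the $\HEnergy$-levels, the remaining sequent has $\energy$ adjacent to $\Killer$, and a short bottom-top argument — the analogue of Lemma~\ref{lemma_ok}, routing through $\Killer$ and then through $a_1^\ast \bs \okay$ via $(\ast R)$ — shows any such sequent is derivable unconditionally. This ``bailout lemma'' is what supplies derivability for statement~2 when the energy is too small to verify the formula honestly, while the hypothesis $\beta>\alpha$ in statement~1 is precisely what forces a derivation to have carried out the honest verification rather than taken the shortcut.

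The hard part will be the bookkeeping around $\Killer$: one must check that the extra trailing formula never unlocks an unwanted principal step in any of the bottom-top reductions imported from Sections~\ref{ssec_BTA}--\ref{ssec_proof} (so those reductions stay \emph{iff}'s), and, dually, that the $\energy$ token can be steered onto $\Killer$ at exactly the moments needed — in particular, formulating and placing the bailout lemma so that statement~2 comes out unconditional while statement~1 keeps the energy hypothesis. Once those points are pinned down, everything else is a transcription of the proof of Lemma~\ref{lemma_main}, with $^\ast$ substituted for $\bang$ and the Kleene-star reversibility (Corollary~\ref{corollary_reversible_rules}, item~\ref{item_bottom-top_ast}) replacing the $\bang$-step at each level.
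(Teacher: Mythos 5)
Your proposal follows the paper's proof essentially verbatim: induction on $\beta=\omega^{h_1}+\dotsc+\omega^{h_M}$, the base case discharged by the explicit derivation through $\Killer$, the case $h_M>0$ handled by replacing the $\bang$-step of the bottom-top analysis with the Kleene-star step (item \ref{item_bottom-top_ast}), which is exactly why the hypothesis $\beta>\alpha$ migrates to statement~1, and the case $h_M=0$ transcribed from Case~2 of Lemma \ref{lemma_main} with $\Killer$ carried along as an additional locked formula. Your ``bailout lemma'' is just the base case of this induction (the paper's displayed derivation of $a_L, a_1^{\inp}, a_X, \energy, \Killer \vdash a_L\cdot\okay$), so no ingredient beyond the paper's own argument is required.
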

	
	Note that the inequality $\omega^{h_1}+\dotsc+\omega^{h_M} > \alpha$ is included in the first statement instead of the second one. This might seem wrong because this implies that (\ref{eq_seq_main_no_Kleene_under_bang}) is derivable if $\inp \in \SATOO$, even if $M = 0$, i.e. the sequent is of the form $a_L, a_1^{\inp}, a_X, \energy, \Killer \vdash a_L \cdot \okay$. This is, however, indeed true because of the formula $\Killer$. Namely, the sequent $a_L, a_1^{\inp}, a_X, \energy, \Killer \vdash a_L \cdot \okay$ is derivable independently of $\inp$:
	$$
		\infer[(\bs L)]
		{
			a_L, a_1^{\inp}, a_\Sigma, \energy, \Killer \vdash a_L \cdot \okay
		}
		{
			\infer[(\wedge L_1)]
			{
				a_L, a_1^{\inp}, a_\Sigma, (a_\Sigma \bs a_1^\ast \bs \okay) \wedge (a_\Pi \bs a_1^\ast \bs \okay) \vdash a_L \cdot \okay
			}
			{
				\infer[(\bs L)]
				{
					a_L, a_1^{\inp}, a_\Sigma, a_\Sigma \bs a_1^\ast \bs \okay \vdash a_L \cdot \okay
				}
				{
					\infer[(\bs L)]
					{
						a_L, a_1^{\inp}, a_1^\ast \bs \okay \vdash a_L \cdot \okay
					}
					{
						a_L, \okay \vdash a_L \cdot \okay
						&
						a_1^{\inp} \vdash a_1^\ast 
					}
					&
					a_\Sigma \vdash a_\Sigma
				}
			}
			&
			\energy \vdash \energy
		}
	$$
		
	\begin{proof}[Proof of Lemma \ref{lemma_main_no_Kleene_under_bang}]
		Again, the proof is by induction on $\beta = \omega^{h_1} + \dotsc + \omega^{h_M}$. Note that $\HEnergy_k$ and $\Killer$ are locked formulas so one can apply the bottom-top analysis in the same way as in the proof of Lemma \ref{lemma_main}. The base case holds, because the sequent $a_L, a_1^{\inp}, a_X, \energy, \Killer \vdash a_L \cdot \okay$ is derivable. Now, let us prove the induction step.
		
		\textbf{Case 1.} Let $h_M > 0$ (compare it with Case 1 in the proof of Lemma \ref{lemma_main}). Then $\HEnergy_{h_M} = \OKAY \wedge [ \energy ]^? \HEnergy_{h_M-1}^\ast$. Let us apply the bottom-top analysis.
		
		\begin{center}
			\begin{tabular}{ll}
				&\\[-11pt]
				$a_L, a_1^{\inp}, a_X, \energy, \principal{\HEnergy_{h_M}}, \dotsc, \HEnergy_{h_1} \vdash a_L \cdot \okay$
				&
				\ref{item_bottom-top_wedge} $(\wedge)$
				\\[2pt]
				&\\[-11pt]
				$a_L, a_1^{\inp}, a_X, \energy, \principal{[ \energy ]^? \HEnergy_{h_M-1}^\ast}, \HEnergy_{h_{M-1}}, \dotsc, \HEnergy_{h_1} \vdash a_L \cdot \okay$
				&
				\ref{item_bottom-top_bs} $(\bs)$
				\\[2pt]
				&\\[-11pt]
				$a_L, a_1^{\inp}, a_X, \principal{\energy \cdot \HEnergy_{h_M-1}^\ast}, \HEnergy_{h_{M-1}}, \dotsc, \HEnergy_{h_1} \vdash a_L \cdot \okay$
				&
				\ref{item_bottom-top_cdot} $(\cdot)$
				\\[2pt]
				&\\[-11pt]
				$a_L, a_1^{\inp}, a_X, \energy, \principal{\HEnergy_{h_M-1}^\ast}, \HEnergy_{h_{M-1}}, \dotsc, \HEnergy_{h_1} \vdash a_L \cdot \okay$
				&
				\ref{item_bottom-top_bang} $(\bang)$
				\\[2pt]	
			\end{tabular}
			\begin{equation}\label{eq_seq_BTA1_NEW}
				\mbox{for each $l$,} \quad a_L, a_1^{\inp}, a_X, \energy, \HEnergy_{h_M-1}^{l}, \HEnergy_{h_{M-1}}, \dotsc, \HEnergy_{h_1} \vdash a_L \cdot \okay
			\end{equation}
		\end{center}
		
		We can apply the induction hypothesis to (\ref{eq_seq_BTA1_NEW}). Let $\beta^\prime_l \eqdef \omega^{h_1}+\dotsc+\omega^{h_{M-1}}+\omega^{h_M-1}\cdot l$. Note that $\beta^\prime_l < \beta$. By the induction hypothesis:
		\begin{enumerate}
			\item if (\ref{eq_seq_BTA1_NEW}) is derivable and $\beta^\prime_l > \alpha$, then $\inp \in \SATOO$;
			\item if $\inp \in \SATOO$, then (\ref{eq_seq_BTA1_NEW}) is derivable.
		\end{enumerate}
		Let us prove both statements of the lemma. Assume that (\ref{eq_seq_main}) is derivable and $\beta > \alpha$. Then (\ref{eq_seq_BTA1_NEW}) is derivable for each $l \in \Nat$. Since $\beta > \alpha$ and $\beta = \sup_{l \in \Nat} \beta^\prime_l$, then it holds that $\beta^\prime_l > \alpha$ for some $l$. In what follows, $\inp \in \SATOO$ as desired. Conversely, let $\inp \in \SATOO$. In what follows, (\ref{eq_seq_BTA1_NEW}) is derivable for each $l \in \Nat$ and hence so is (\ref{eq_seq_main}).

		\textbf{Case 2.} If $h_M = 0$, then $\HEnergy_{h_M} = \HEnergy_0 = \Energy_{0}$, so the reasonings are the same as in Case 2 in the proof of Lemma \ref{lemma_main} (the only difference is where to take the inequality $\beta > \alpha$ into account). 
	\end{proof}
	
	\begin{corollary}
		The derivability problem for $\ACTMult^-$ is recursively isomorphic to $\SATOO$ and hence is $\Delta^0_{\omega^\omega}$-complete under Turing reductions.
	\end{corollary}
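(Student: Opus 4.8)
The plan is to exhibit one-one reductions in both directions between $\SATOO$ and derivability in $\ACTMult^-$, and then conclude by Myhill's isomorphism theorem together with the $\Delta^0_{\omega^\omega}$-completeness of $\SATOO$. The direction from $\ACTMult^-$ to $\SATOO$ requires essentially no new work: $\ACTMult^-$ and $\ACTMult$ share exactly the same rules, and by cut admissibility every derivable $\ACTMult^-$-sequent has a cut-free derivation, all of whose sequents are built from subformulas of the endsequent and therefore still contain no Kleene star in the scope of $\bang$. Hence derivability in $\ACTMult^-$ of an $\ACTMult^-$-sequent coincides with its derivability in $\ACTMult$, and the injective function $\Pi \vdash C \mapsto \langle F(\pi(\rho(\Pi \vdash C))), \langle \ulcorner \Pi \vdash C \urcorner \rangle \rangle$ from the proof of Corollary \ref{corollary_ACTMult_to_SATOO} restricts to a one-one reduction of $\ACTMult^-$-derivability to $\SATOO$.

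For the converse direction I would build a one-one reduction of $\SATOO$ out of Lemma \ref{lemma_main_no_Kleene_under_bang}. The first step is to verify that the sequent \eqref{eq_seq_main_no_Kleene_under_bang} genuinely lies in $\ACTMult^-$: unwinding Definitions \ref{def_fm}--\ref{def_Energy}, the only occurrences of $\bang$ inside $\Energy_0$ (hence inside $\HEnergy_0$) are those in $\bang\Rule_{SR}$ and in $\bang[\go]^\rightarrow a_2$, whose bodies are assembled only from primitive formulas, $\nabla$, $\wait$, $\go$ and the $\fm(r)$, so that no Kleene star sits under a $\bang$; and the remaining Kleene stars — the $a_2^\ast$ in $\Energy_\Pi$, the $a_1^\ast$ in $\Killer$, and the outer stars $\HEnergy_k^\ast$ — all occur outside every $\bang$. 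Then, given an index $\inp = \langle \langle X,\pi(\alpha),i,e\rangle, \langle n_1,\dots,n_i\rangle\rangle$, I would compute the unique weakly decreasing exponents $h_1 \ge \dots \ge h_M$ with $\omega^{h_1}+\dots+\omega^{h_M} = \alpha+1$ (note $M \ge 1$ and $\alpha+1 > \alpha$) and map $\inp$ to \eqref{eq_seq_main_no_Kleene_under_bang}; by both clauses of Lemma \ref{lemma_main_no_Kleene_under_bang} this sequent is derivable if and only if $\inp \in \SATOO$.

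To keep the map total and injective I would send each $\inp$ that is not of the above index shape — such $\inp$ lie trivially outside $\SATOO$ — to the sequent $b_0^{\,\inp+1} \vdash b_1$, where $b_0 \ne b_1$ are primitive formulas not used elsewhere. These sequents are pairwise distinct, never derivable, and, having a succedent other than $a_L \cdot \okay$, never appear in the image of the main part, so the combined function is an injective one-one reduction of $\SATOO$ to $\ACTMult^-$-derivability. Having the two one-one reductions, Myhill's isomorphism theorem \cite[\S7.4]{Rogers67} yields a recursive isomorphism between $\SATOO$ and $\ACTMult^-$-derivability; since $\SATOO$ is $\Delta^0_{\omega^\omega}$-complete under Turing reductions (the corollary in Section \ref{ssec_hyperarithmetic}) and recursive isomorphism preserves Turing degree, $\ACTMult^-$-derivability is $\Delta^0_{\omega^\omega}$-complete under Turing reductions.

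The only genuinely delicate point is the totality/injectivity bookkeeping just described: in contrast with Corollary \ref{corollary_SATOO_to_ACTMult}, one cannot use $M=0$ as the default value, because thanks to the $\Killer$ gadget the $M=0$ instance of \eqref{eq_seq_main_no_Kleene_under_bang} is \emph{always} derivable, so a separate family of fixed non-derivable sequents must be reserved for the garbage inputs. Everything else is a direct transfer of the arguments already used for $\ACTMult$, using Lemma \ref{lemma_main_no_Kleene_under_bang} in place of Lemma \ref{lemma_main}.
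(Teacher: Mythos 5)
Your proposal is correct and follows essentially the same route the paper intends: the $\SATOO$-to-$\ACTMult^-$ direction via Lemma \ref{lemma_main_no_Kleene_under_bang} with $\omega^{h_1}+\dotsc+\omega^{h_M}=\alpha+1$, the converse by restricting the reduction of Corollary \ref{corollary_ACTMult_to_SATOO}, and Myhill's theorem to conclude. Your extra care with the garbage inputs (sending them to fresh non-derivable sequents rather than using the $M=0$ default, which the $\Killer$ formula makes derivable) is a valid and indeed necessary bookkeeping point that the paper leaves implicit.
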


	\subsection{Fragments of $\ACTMult$ with Intermediate Complexities}\label{ssec_family_of_fragments}
	
	As we have proved, the fragment $\ACTMult^-$ has the same complexity as $\ACTMult$. One might be interested in whether one can restrict $\ACTMult$ in a natural way to obtain a logic with complexity strictly between $\Delta^0_\omega$ and $\Delta^0_{\omega^\omega}$. The answer to this question is positive. Namely, let us consider the fragment of $\ACTMult$ obtained by considering only formulas such that their $\{\ast,\bang\}$-depth is not greater than a fixed $k \in \Nat$. The notion of $\{\ast,\bang\}$-depth is similar to that of modal depth for modal logic. One can easily define it using the rank function $\rho$:
	
	\begin{definition}
		$\{\ast,\bang\}$-depth of a formula $A$ is $k \in \Nat$ such that $\omega^k \le \rho(A) < \omega^{k+1}$.
	\end{definition}
	
	\begin{example}
		$\{\ast,\bang\}$-depth of the formulas $\bang(p \wedge q^\ast)$ and $(\bang p \wedge q)^\ast$ equals $2$; $\{\ast,\bang\}$-depth of $\bang p \wedge q^\ast$ equals $1$.
	\end{example}
	
	Let $\Fm^{(k)}$ be the set of formulas of $\{\ast,\bang\}$-depth not greater than $k$. Let $\ACTMult^{(k)}$ be the fragment of $\ACTMult$ where only formulas from $\Fm^{(k)}$ are used.
	
	\begin{theorem}
		For $k > 0$, derivability in $\ACTMult^{(k)}$ is $\Delta^0_{\omega^k}$-hard and it is in $\Delta^0_{\omega^{k+1}}$.
	\end{theorem}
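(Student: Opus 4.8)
The plan is to prove the two bounds separately, reusing in both directions the apparatus already developed for $\ACTMult$. For the upper bound, I would first note that every formula of $\Fm^{(k)}$ has rank $<\omega^{k+1}$ by definition of $\{\ast,\bang\}$-depth, and that the natural (Hessenberg) sum of finitely many ordinals below $\omega^{k+1}$ stays below $\omega^{k+1}$, so every sequent over $\Fm^{(k)}$ has rank $<\omega^{k+1}$. By Remark \ref{remark_rank_increases} each generalized rule strictly lowers the rank, hence any derivation of a sequent over $\Fm^{(k)}$ uses only sequents of rank $<\omega^{k+1}$, i.e. only formulas of $\Fm^{(k)}$; in particular derivability in $\ACTMult$ and in $\ACTMult^{(k)}$ coincide on such sequents. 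Then I would run the reduction of Corollary \ref{corollary_ACTMult_to_SATOO}: if $\alpha=\rho(\Pi\vdash C)<\omega^{k+1}$, Proposition \ref{prop_Der} gives that $\Der_{\pi(\alpha)}$ is $\Sigma_{\alpha\cdot 2+1}$ with $\alpha\cdot 2+1<\omega^{k+1}$, so $\langle F(\pi(\alpha)),\langle\ulcorner\Pi\vdash C\urcorner\rangle\rangle\in\SAT_{<\omega^{k+1}}$ iff $\Pi\vdash C$ is derivable. Since $\SAT_{<\omega^{k+1}}$ is computable from $H(\omega^{k+1})$ — by exactly the effective transfinite recursion argument used for $\SATOO\le_m H(\omega^\omega)$ in the proof of Proposition \ref{proposition_SAT_H}, restricted to notations $\prec\pi(\omega^{k+1})$ — it lies in $\Delta^0_{\omega^{k+1}}$, and hence so does derivability in $\ACTMult^{(k)}$.

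For the lower bound, the crucial step is to pin down the $\{\ast,\bang\}$-depth of the formulas occurring in the main sequent (\ref{eq_seq_main}). A routine computation with $\rho$ shows that $\fm(r)$ and $\Rule_{SR}$ have finite rank, and that $\Technical(SR,f)$ has rank $\omega+c$ for some finite $c$ whenever the values of $f$ have rank $<\omega$; consequently $\Energy_\Sigma$, $\Energy_\Pi$, $\Energy$ and $\Energy_0$ all have rank in the interval $[\omega,\omega^2)$, i.e. $\{\ast,\bang\}$-depth $1$ — here one uses that the Kleene star in $\Energy_\Pi$ is applied only to the primitive formula $a_2$ and that $\bang$ is applied only to formulas of finite rank. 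Since $\Energy_{k+1}=\OKAY\wedge[\energy]^?\bang\Energy_k$ and $\rho(\bang A)=\rho(A)\cdot\omega+1$, an easy induction then yields $\omega^{h+1}\le\rho(\Energy_h)<\omega^{h+2}$, so $\Energy_h$ has $\{\ast,\bang\}$-depth exactly $h+1$; the remaining formulas of (\ref{eq_seq_main}) have depth $0$. Hence the sequent $\mathrm{seq}(\inp)$ of Corollary \ref{corollary_SATOO_to_ACTMult} lies in $\ACTMult^{(k)}$ exactly when $h_1\le k-1$, and since $\omega^{h_1}+\dots+\omega^{h_M}=\alpha+1$ this happens precisely when $\alpha<\omega^k$.

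Therefore I would take the reduction sending $\inp=\langle\langle X,\pi(\alpha),i,e\rangle,\langle n_1,\dots,n_i\rangle\rangle$ with $\alpha<\omega^k$ to $\mathrm{seq}(\inp)$ and every other input to a fixed underivable sequent over $\Fm^{(k)}$ (say $p\vdash q$ with $p\ne q$ primitive); this is computable, lands among sequents over $\Fm^{(k)}$, and by Lemma \ref{lemma_main} it m-reduces $\SAT_{<\omega^k}$ to derivability in $\ACTMult^{(k)}$. Since $\SAT_{<\omega^k}$ is m-equivalent to $H(\omega^k)$ — the reduction $H(\omega^k)\le_m\SAT_{<\omega^k}$ coming from the $\theta^{[\alpha]}$ construction in the proof of Proposition \ref{proposition_SAT_H} specialized to $\alpha<\omega^k$, together with the decomposition of the limit stage $H(\omega^k)$ into the $\Sigma_\beta$-definable sets $H(\beta)$, $\beta<\omega^k$ — and $H(\omega^k)$ is $\Delta^0_{\omega^k}$-complete, derivability in $\ACTMult^{(k)}$ is $\Delta^0_{\omega^k}$-hard. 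I expect the only genuinely new work to be the rank bookkeeping for $\Energy_\Sigma$, $\Energy_\Pi$ and $\Energy_h$ (and, correspondingly, observing via the rank argument of the first paragraph that the bottom-top analysis of Lemma \ref{lemma_main}, which replaces $\Energy_0$ by $\Energy_\Sigma$ or $\Energy_\Pi$, keeps every intermediate sequent inside $\Fm^{(k)}$ whenever $k\ge 1$); everything else is assembled from results already proved, the two instances of $\SAT_{<\beta}\equiv_m H(\beta)$ being routine specializations of Proposition \ref{proposition_SAT_H}.
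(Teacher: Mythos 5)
Your proposal is correct and follows essentially the same route as the paper: the lower bound uses the reduction $\mathrm{seq}$ of Corollary~\ref{corollary_SATOO_to_ACTMult} restricted to inputs with $\alpha<\omega^k$ (so that each $\Energy_{h_i}$ has $\{\ast,\bang\}$-depth $h_i+1\le k$), and the upper bound uses the reduction of Corollary~\ref{corollary_ACTMult_to_SATOO} together with $\rho(\Pi\vdash C)\cdot 2+1<\omega^{k+1}$. The extra work you do --- the rank computation verifying that the depth of $\Energy_h$ is exactly $h+1$, the conservativity of $\ACTMult^{(k)}$ over $\ACTMult$ via the rank-decreasing rules, and the specializations of Proposition~\ref{proposition_SAT_H} to the levels $\omega^k$ and $\omega^{k+1}$ --- is exactly what the paper leaves implicit, and it is carried out correctly.
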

	
	\begin{proof}
		The function $\mathrm{seq}$ from Corollary \ref{corollary_SATOO_to_ACTMult} reduces $\SAT_{\omega^k}$ to derivability in $\ACTMult^{(k)}$. Indeed, if $\inp = \langle \langle X,\pi(\alpha),i,e \rangle,\langle n_1,\dotsc,n_i \rangle\rangle$ and $\alpha < \omega^{k}$, then 
		$$
		\mathrm{seq}(\inp) = a_L, a_1^{\inp}, a_X, \energy, \Energy_{h_M}, \dotsc, \Energy_{h_1} \vdash a_L \cdot \okay
		$$
		where $\omega^{h_i} \le \alpha + 1$, hence $h_i < k$. Note that the $\{\ast,\bang\}$-depth of $\Energy_h$ equals $h+1$; then, the $\{\ast,\bang\}$-depth of $\Energy_{h_i}$ is not greater than $k$, as desired.
		
		Conversely, let us use the reduction from Corollary \ref{corollary_ACTMult_to_SATOO}. Given the sequent $\Pi \vdash C$ of $\ACTMult^{(k)}$ with the rank $\alpha = \rho(\Pi \vdash C)$, compute $\langle F(\pi(\alpha)), \langle \ulcorner \Pi \vdash C \urcorner \rangle \rangle$. It remains to notice that, since $\alpha < \omega^{k+1}$, it holds that $2 \cdot \alpha + 1 < \omega^{k+1}$; therefore, $\langle F(\pi(\alpha)), \langle \ulcorner \Pi \vdash C \urcorner \rangle \rangle \in \SATOO$ is equivalent to $\langle F(\pi(\alpha)), \langle \ulcorner \Pi \vdash C \urcorner \rangle \rangle \in \SAT_{\omega^{k+1}}$.
	\end{proof}
	Consequently, the complexities of the logics $\ACTMult^{(k)}$ tend to the $\omega^\omega$ level of the hyperarithmetical hierarchy. 
	
	\section{Conclusion}\label{sec_conclusion}
	
	We have proved that the complexity of the derivability problem for $\ACTMult$ is $\Delta^0_{\omega^\omega}$-complete under Turing reductions and that the closure ordinal for this calculus equals $\omega^\omega$ (even if the cut rule is included in the list of rules). We have shown that the set of sequents derivable in $\ACTMult$ is recursively isomorphic to the satisfaction predicate $\SATOO$ and is m-equivalent to the set $H(\omega^\omega)$, which is defined using the polynomial notation on ordinals below $\omega^\omega$. Therefore, the logic $\ACTMult$ has strictly hyperarithmetical complexity. The same holds for the logic $\ACTMult^-$.
	
	Using the same reductions we have shown that the $k$-th fragment $\ACTMult^{(k)}$ of infinitary action logic with multiplexing has the complexity between $\Delta^0_{\omega^k}$ and $\Delta^0_{\omega^{k+1}}$; in what follows, the logics $\ACTMult^{(2k)}$ have strictly increasing complexities that tend to $\Delta^0_{\omega^\omega}$. Establishing the precise complexity of $\ACTMult^{(k)}$ is quite an interesting question for further research. Our conjecture is that $\ACTMult^{(k)}$ is $\Delta^0_{\omega^{k}}$-complete, which perhaps could be proved by refining the formula $\Der_p(x_1)$ introduced in Section \ref{ssec_ACTMult_to_SATOO} and thus decreasing its rank. Namely, taking into account that some of the disjunctions and conjunctions are actually finitary could be helpful.

	\section*{Acknowledgments}
	
	I am grateful to Stepan L. Kuznetsov and Stanislav Speranski for their careful attention to my work and valuable advice.
	
	\section*{Funding}
	
	This work was performed at the Steklov International Mathematical Center and supported by the Ministry of Science and Higher Education of the Russian Federation (agreement no. 075-15-2022-265).
	
	\bibliographystyle{plain}
	\bibliography{ACT_Multiplexing_complexity}

\end{document}